\newtheorem{thm}{Theorem}[section]
\newtheorem{prop}[thm]{Proposition}
\newtheorem{lem}[thm]{Lemma}
\newtheorem{cor}[thm]{Corollary}
\theoremstyle{definition}
\newtheorem{defn}[thm]{Definition}
\newtheorem{rem}[thm]{Remark}
\newtheorem{exmp}[thm]{Example}
\newtheorem{cons}[thm]{Construction}
\newcommand{\abs}[1]{\lvert{#1}\rvert}
\renewcommand{\bar}[1]{\overline{#1}}
\newcommand{\set}[2]{\{\,{#1} \mid {#2} \,\}}
\renewcommand{\emptyset}{\varnothing}
\newcommand{\field}[1]{\mathbb{#1}}
\newcommand{\R}{\field{R}}
\newcommand{\PP}{\field{P}}
\newcommand{\RR}{\field{R}}
\newcommand{\JJ}{\field{J}}
\newcommand{\ZZ}{\field{Z}}
\newcommand{\TT}{\field{T}}
\newcommand{\II}{\field{I}}
\newcommand{\KK}{\field{K}}
\newcommand{\LL}{\field{L}}
\newcommand{\pf}{\mathbf{p/f}}
\newcommand{\p}{\mathbf{p}}
\newcommand{\f}{\mathbf{f}}
\DeclareMathOperator{\CAT}{CAT}
\newcommand{\showcomments}{yes}
\newsavebox{\commentbox}
\begin{document}
\title[The coarse geometry of certain right-angled Coxeter groups]{On the coarse geometry of certain right-angled Coxeter groups}

\author{Hoang Thanh Nguyen}
\address{Department of Mathematical Sciences\\
University of Wisconsin-Milwaukee\\
P.O. Box 413\\
Milwaukee, WI 53201\\
USA}
\email{nguyen36@uwm.edu}

\author{Hung Cong Tran}
\address{Department of Mathematics\\
The University of Georgia\\
1023 D. W. Brooks Drive\\
Athens, GA 30605\\
USA}
\email{hung.tran@uga.edu}
\date{\today}

\begin{abstract}
Let $\Gamma$ be a connected, triangle-free, planar graph with at least five vertices that has no separating vertices or edges. If the graph $\Gamma$ is $\mathcal{CFS}$, we prove that the right-angled Coxeter group $G_\Gamma$ is virtually a Seifert manifold group or virtually a graph manifold group and we give a complete quasi-isometry classification of these such groups. Otherwise, we prove that $G_\Gamma$ is hyperbolic relative to a collection of $\mathcal{CFS}$ right-angled Coxeter subgroups of $G_\Gamma$. Consequently, the divergence of $G_\Gamma$ is linear, or quadratic, or exponential. We also generalize right-angled Coxeter groups which are virtually graph manifold groups to certain high dimensional right-angled Coxeter groups (our families exist in every dimension) and study the coarse geometry of this collection. We prove that strongly quasiconvex torsion free infinite index subgroups in certain graph of groups are free and we apply this result to our right-angled Coxeter groups. %, Morse boundary of right-angled Coxeter groups $G_\Gamma$, and their connection to right-angled Artin groups when $\Gamma$ are $\mathcal{CFS}$. %We also discuss one-ended $\mathcal{CFS}$ right-angled Coxeter groups that shares many analogous properties with right-angled Artin groups but are not quasi-isometric to any right-angled Artin group. 
\end{abstract}
\subjclass[2000]{%
20F67, % Hyperbolic groups and nonpositively curved groups
20F65} % Geometric group theory
\maketitle
\section{Introduction}
For each finite simplicial graph $\Gamma$ the associated \emph{right-angled Coxeter group} $G_\Gamma$ has generating set $S$ equal to the vertices of $\Gamma$, relations $s^2=1$ for each $s$ in $S$ and relations $st = ts$ whenever $s$ and $t$ are adjacent vertices. Graph $\Gamma$ is the \emph{defining graph} of right-angled Coxeter group $G_\Gamma$ and its flag complex $K=K(\Gamma)$ is the \emph{defining nerve} of the group. Therefore, we also denote the right-angled Coxeter group $G_\Gamma$ by $G_K$ where $K$ is the flag complex of $\Gamma$. 

In geometric group theory, groups acting on $\CAT(0)$ cube complexes are fundamental objects and right-angled Coxeter groups provide a rich source of these such groups. The geometry of right-angled Coxeter groups was studied by Caprace \cite{MR2665193, MR3450952}, Davis-Okun \cite{MR1812434}, Dani-Thomas \cite{MR3314816, DATA}, Dani-Stark-Thomas \cite{DST}, Behrstock-Hagen-Sisto \cite{MR3623669}, Levcovitz \cite{IL}, Haulmark-Nguyen-Tran \cite{HNT}, Tran \cite{Tran2017} and others. In this paper, we first study the geometry of right-angled Coxeter groups $G_\Gamma$ whose defining graph $\Gamma$ are connected, triangle-free, planar, has at least 5 vertices, and has no separating vertices or edges (we call them \emph{Standing Assumptions}). Then we generalize a part of work on the such group to certain high dimensional right-angled Coxeter groups.

\subsection{Right-angled Coxeter groups with $\mathcal{CFS}$ defining graphs}

It is well-known from the work of Davis-Januszkiewicz~\cite{MR1783167} that every right-angled Artin group is commensurable (hence, quasi-isometric) to some right-angled
Coxeter group and therefore we are especially interested in right-angled Coxeter groups whose coarse geometry are ``similar'' to the one of a right-angled Artin group. Behrstock-Charney~\cite{MR2874959} prove that the divergence of a one-ended right-angled Artin group is linear or quadratic. Therefore, the divergence of a one-ended right-angled Coxeter which is quasi-isometric to some right-angled Artin group must be linear or quadratic. It has been shown by Dani-Thomas~\cite{MR3314816} and Levcovitz~\cite{IL} that the divergence of a right-angled Coxeter group $G_\Gamma$ is linear or quadratic if and only if $\Gamma$ is $\mathcal{CFS}$ (see Definition~\ref{CFS} for the concept of $\mathcal{CFS}$ graphs). Thus studying right-angled Coxeter groups with $\mathcal{CFS}$ defining graphs is one of the main goal in this paper.

\subsubsection{Quasi-isometric classification of $2$--dimensional right-angled Coxeter groups}

Quasi-isometric classification of groups is one of most essential programs in geometric group theory. A complete solution for quasi-isometric classification of the class of right-angled Coxeter groups is unknown (even in the case of $\mathcal{CFS}$ graphs). %It is a trichodomy that any right-angled Coxeter group is either hyperbolic or relatively hyperbolic in a non-trivial way or has polynomial divergence. 
Behrstock observed that the question on quasi-isometric classification of $\mathcal{CFS}$ right-angled Coxeter groups is appealing but likely difficult (see Question~4.2 \cite{B}). In this paper, we partially answer that question when $\mathcal{CFS}$ defining graphs $\Gamma$ satisfy Standing Assumption. 

The key idea here is that after doing a tree-like decomposition on the graph $\Gamma$ (see Section~\ref{sec:graphdecomposition}), we obtain a tree which we call \emph{visual decomposition tree}. We will give the precise definition of visual decomposition tree later in Section~\ref{sec:graphdecomposition}. Currently, the reader only need to know that each piece of this decomposition is a suspension of distinct points. We observe that the right-angled Coxeter group associated to a piece of this decomposition resembles Seifert fibered space. We then glue these pieces in the pattern of the visual decomposition tree to get a graph manifold where $G_\Gamma$ acts properly and cocompactly. Using the work of Behrstock-Neumann on quasi-isometric classification of graph manifolds, we obtain a quasi-isometric classification theorem for right-angled Coxeter groups with $\mathcal{CFS}$ defining graphs.

\begin{thm}
\label{thm:graphmanifold}
Let $\Gamma$ be a graph satisfying Standing Assumptions. Then:
\begin{enumerate}
\item
\label{item:seifert}
The right-angled Coxeter group $G_{\Gamma}$ is virtually a Seifert manifold group if and only if $\Gamma$ is a suspension of some distinct vertices.
\item
\label{item:graphmanifold}
The right-angled Coxeter group $G_{\Gamma}$ is virtually a graph manifold group if and only if $\Gamma$ is $\mathcal{CFS}$ and it is not a suspension of distinct vertices.
\item
\label{haha} Let $\Gamma$ and $\Gamma'$ be two $\mathcal{CFS}$ graphs satisfying Standing Assumptions. Let $T_r$ and $T_r'$ be two visual decomposition trees of $\Gamma$ and $\Gamma'$ respectively. Then two groups $G_{\Gamma}$ and $G_{\Gamma'}$ are quasi-isometric if and only if $T_r$ and $T_r'$ are bisimilar.
\end{enumerate}
\end{thm}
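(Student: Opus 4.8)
The plan is to realize $G_\Gamma$ as a group acting geometrically on a graph manifold built from the visual decomposition tree $T_r$, and then to import the Behrstock-Neumann quasi-isometric classification of graph manifold groups. Under the Standing Assumptions each piece $P$ of the decomposition is a suspension $P = \{a,b\} * \{v_1,\dots,v_n\}$ of $n \ge 3$ distinct vertices, so $G_P \cong D_\infty \times (\Z/2 * \cdots * \Z/2)$; this contains a finite-index subgroup $\Z \times F$ with $F$ nonabelian free, and $\Z \times F = \pi_1(S^1 \times \Sigma)$ for a surface $\Sigma$ with nonempty boundary, the fundamental group of a Seifert fibered space with toral boundary. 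In particular, when $\Gamma$ is itself a suspension, $G_\Gamma$ is virtually $\Z \times F$, giving the forward direction of (1).

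For a general $\mathcal{CFS}$ graph I would assemble a graph of spaces over $T_r$: one Seifert piece $S^1 \times \Sigma$ per vertex, glued along boundary tori according to the edges, with the gluings arranged (after passing to a finite-index subgroup) as flips interchanging base and fiber directions. The outcome is a graph manifold $M$ with nonempty boundary, and I would check that a torsion-free finite-index subgroup of $G_\Gamma$ acts freely, properly, and cocompactly on $\widetilde M$, so that $G_\Gamma$ is virtually $\pi_1(M)$ and quasi-isometric to it. If $T_r$ is a single vertex then $M$ is Seifert; if $T_r$ has an edge then $M$ has nontrivial JSJ decomposition and is not Seifert.

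To finish (1) and (2), I would separate the regimes using divergence, a quasi-isometry invariant: virtually-$(\Z \times F)$ groups have linear divergence, non-geometric graph manifold groups have quadratic divergence, and by Dani-Thomas \cite{MR3314816} and Levcovitz \cite{IL} a graph failing to be $\mathcal{CFS}$ gives super-quadratic divergence. Since a Seifert manifold group is virtually $\Z \times F$ only in the product case and a graph manifold group with nontrivial JSJ is never virtually Seifert, these invariants single out the suspensions as exactly the virtually-Seifert graphs and the $\mathcal{CFS}$ non-suspensions as exactly the virtually-graph-manifold graphs, establishing the ``only if'' directions.

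For (3) I would apply the Behrstock-Neumann classification of bounded graph manifold groups, by which $\pi_1(M)$ and $\pi_1(M')$ are quasi-isometric if and only if the decorated decomposition trees of $M$ and $M'$ are bisimilar. The crux --- and what I expect to be the main obstacle --- is to identify, as labeled trees, the JSJ decomposition tree of $M$ with the visual decomposition tree $T_r$ of $\Gamma$, matching Seifert pieces to vertices and gluing data to the edge decorations on which the bisimilarity invariant depends; this sits alongside the earlier task of arranging the gluings so that $M$ is genuinely a graph manifold carrying a geometric action of $G_\Gamma$. Granting the identification, bisimilarity of $T_r$ and $T_r'$ is equivalent to bisimilarity of the manifold trees, hence to quasi-isometry of $G_\Gamma$ and $G_{\Gamma'}$.
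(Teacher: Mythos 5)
Your proposal follows essentially the same route as the paper's proof: the paper builds, for each vertex $v$ of $T_r$, a piece $P_v = F(T_n)\times \ell$ (a fattened Davis-complex tree times a line) and glues these along labeled Euclidean planes according to the Bass--Serre tree of the $T_r$-splitting to get a $3$--manifold $Y$ with a proper cocompact $G_\Gamma$--action, then separates the regimes by divergence (Dani--Thomas together with Gersten and Kapovich--Leeb) exactly as you do, and deduces (3) from the Behrstock--Neumann bisimilarity classification \cite{MR2376814}. The crux you flag --- identifying the manifold's decorated decomposition tree with $T_r$ --- is resolved automatically by this equivariant construction: the decomposition tree of $Y$ is the Bass--Serre tree $\tilde{T}_r$, which weakly covers (hence is bisimilar to) $T_r$; the flip gluings need not be arranged by hand, since suspension vertices of one piece are non-suspension vertices of the adjacent piece; and a vertex is black (weight strictly exceeds degree) precisely when its piece $P_v$ has a boundary plane not consumed by an edge gluing, i.e., meets $\partial Y$, which is exactly the Behrstock--Neumann coloring.
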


As we discussed above every right-angled Artin group is quasi-isometric to some $\mathcal{CFS}$ right-angled Coxeter group. A natural question arises is which $\mathcal{CFS}$ right-angled Coxeter groups are quasi-isometric to some right-angled Artin groups. In \cite{B}, Behrstock gives an example of $\mathcal{CFS}$ right-angled Coxeter group which is not quasi-isometric to any right-angled Artin group by using Morse boundary. More precisely, the Morse boundary of the right-angled Coxeter group in his examples contains a circle. Meanwhile, Morse boundaries of all right-angled Artin groups are empty or totally disconnected, this is implicit in \cite{MR3339446} and also follows immediately from Theorem F in \cite{MR3664526}. Therefore, the right-angled Coxeter group in his example is not quasi-isometric to any right-angled Artin group since Morse boundary is a quasi-isometry invariant (see \cite{MR3339446} and also \cite{MC1}). However, it would be natural to conjecture that a one-ended right-angled Coxeter group $G_\Gamma$ is quasi-isometric to some right-angled Artin group if and only if $\Gamma$ is $\mathcal{CFS}$ and the Morse boundary of $G_\Gamma$ is empty or totally disconnected. However, we show that this fact is not true. 

In fact, let $\Gamma$ be a $\mathcal{CFS}$, non-join graph which satisfies Standing Assumptions. By an implicit work in \cite{MR3339446} and the fact that right-angled Coxeter group $G_\Gamma$ can be decomposed as a tree of groups with empty Morse boundary, we observe that $G_\Gamma$ has totally disconnected Morse boundary. However, $G_\Gamma$ is not necessarily quasi-isometric to a right-angled Artin group. More precisely, we give a characterization on defining graph $\Gamma$ for $G_\Gamma$ to be quasi-isometric to a right-angled Artin group. Moreover, we also specify types of right-angled Artin groups which are quasi-isometric to such right-angled Coxeter groups. 

\begin{thm}
\label{thm:racgraag}
Let $\Gamma$ be a $\mathcal{CFS}$, non-join graph satisfying Standing Assumptions and $T_r$ a visual decomposition tree of $\Gamma$. Then the following are equivalent:
\begin{enumerate}
\item The right-angled Coxeter group $G_\Gamma$ is quasi-isometric to a right-angled Artin group.
\item The right-angled Coxeter group $G_\Gamma$ is quasi-isometric to the right-angled Artin group of a tree of diameter at least 3.
\item The right-angled Coxeter group $G_\Gamma$ is quasi-isometric to the right-angled Artin group of a tree of diameter exactly 3.
\item All vertices of the tree $T_r$ are black.
\end{enumerate}
\end{thm}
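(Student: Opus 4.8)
The plan is to establish the four conditions as a cycle $(1)\Rightarrow(4)\Rightarrow(3)\Rightarrow(2)\Rightarrow(1)$, with Theorem~\ref{thm:graphmanifold} supplying the translation of $G_\Gamma$ into (virtually) a graph manifold group and the quasi-isometry classification of graph manifold groups due to Behrstock--Neumann serving as the main engine. Concretely, in the construction behind Theorem~\ref{thm:graphmanifold} each vertex of $T_r$ contributes a Seifert piece of the ambient graph manifold: a black vertex is a suspension of at least three points, giving a piece with hyperbolic base (virtually $\Z\times F_k$ with $k\ge 2$), whereas a white vertex is the suspension of exactly two points, i.e.\ a $4$--cycle, giving a flat piece (virtually $\Z^2$). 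The colored tree $T_r$ records the Behrstock--Neumann decomposition graph of this manifold, and two such groups are quasi-isometric exactly when their colored decomposition trees are bisimilar, which is the content of Theorem~\ref{thm:graphmanifold}(\ref{haha}). I would isolate first two facts from Behrstock--Neumann: all right-angled Artin groups on trees of diameter at least $3$ are quasi-isometric to one another, forming a single bisimilarity class represented by an all-black tree $\Delta_0$; and each such Artin group is itself (virtually) a graph manifold group whose decomposition tree is $\Delta_0$. Granting these, $(3)\Rightarrow(2)$ and $(2)\Rightarrow(1)$ are formal (a tree of diameter exactly $3$ has diameter at least $3$, and a tree right-angled Artin group is a right-angled Artin group), so only $(4)\Rightarrow(3)$ and $(1)\Rightarrow(4)$ carry content.

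For $(4)\Rightarrow(3)$ I would argue purely with bisimilarity. When every vertex of $T_r$ is black, every Seifert piece has hyperbolic base, and the Standing Assumptions force each black vertex to carry enough branching that the finite colored tree $T_r$ is ``saturated''. I would then write down an explicit bisimulation between $T_r$ and the canonical all-black tree $\Delta_0$ of the diameter-$3$ tree Artin group and invoke the bisimilarity criterion of Theorem~\ref{thm:graphmanifold}(\ref{haha}) to conclude that $G_\Gamma$ is quasi-isometric to that Artin group. This step is a finite combinatorial check: verify that the all-black visual decomposition trees arising under Standing Assumptions constitute a single bisimilarity class, namely $[\Delta_0]$.

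The crux, and the step I expect to be the main obstacle, is $(1)\Rightarrow(4)$. Suppose $G_\Gamma$ is quasi-isometric to a right-angled Artin group $A_\Lambda$. The first and hardest task is to pin down $\Lambda$: since $G_\Gamma$ is one-ended, has quadratic divergence and totally disconnected Morse boundary, and is a graph manifold group, $A_\Lambda$ inherits these invariants, and I would combine them with the characterization of which right-angled Artin groups are quasi-isometric to graph manifold groups to force $\Lambda$ to be a tree of diameter at least $3$; in particular $A_\Lambda$ is then a graph manifold group with decomposition tree $\Delta_0$. The bisimilarity criterion now gives that $T_r$ is bisimilar to the all-black tree $\Delta_0$, and since any bisimulation matches vertices of equal color, a tree bisimilar to an all-black tree can contain no white vertex; hence every vertex of $T_r$ is black, which is $(4)$. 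The delicate inputs are exactly the rigidity used to identify $\Lambda$ as a diameter-$\ge 3$ tree and, on the combinatorial side, the verification that a white (flat $\Z^2$) piece is a genuine bisimilarity obstruction rather than something a bisimulation can absorb; closing the cycle $(1)\Rightarrow(4)\Rightarrow(3)\Rightarrow(2)\Rightarrow(1)$ then completes the proof.
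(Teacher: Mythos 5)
Your outline is essentially the paper's: identify the Artin group via Behrstock--Neumann rigidity (a RAAG quasi-isometric to a $3$--manifold group \emph{is} a $3$--manifold group, Theorem~\ref{bnam}) together with Gordon's theorem and the divergence of $G_\Gamma$, forcing $\Lambda$ to be a tree of diameter at least $3$; then play the Behrstock--Neumann quasi-isometry classification of graph manifolds against the colored tree $T_r$. The paper organizes the equivalences as $(1)\Leftrightarrow(2)$, $(2)\Leftrightarrow(3)$ (all diameter-$\geq 3$ tree RAAGs are quasi-isometric, by Behrstock--Neumann), and $(3)\Leftrightarrow(4)$, rather than your cycle, but that difference is cosmetic.

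There is, however, one genuine error: your semantics of the coloring. In Construction~\ref{cons:keyidea2} \emph{every} vertex of $T_r$, white or black, carries a suspension of at least three points, hence contributes a Seifert piece with hyperbolic base; a vertex is white exactly when its weight equals its degree in $T_r$, i.e.\ when every one of the $w(v)$ boundary planes of the piece $P_v$ is glued to a neighboring piece, so that the corresponding Seifert component contains no boundary component of the ambient manifold. There are no flat $\Z^2$ vertex pieces at all: the $4$--cycles label the \emph{edges} of $T_r$ (the $D_\infty\times D_\infty$ gluing tori), not its vertices. Consequently the obstruction posed by a white vertex is not ``flat versus hyperbolic base'' but ``Seifert piece without boundary versus boundary in every Seifert piece,'' which is precisely the dichotomy the paper invokes: if $T_r$ is all black, $G_\Gamma$ is virtually the fundamental group of a graph manifold with boundary in each Seifert piece, and Behrstock--Neumann place all such manifolds (including the one for the diameter-$3$ tree RAAG) in a single quasi-isometry class; if $T_r$ has a white vertex, the manifold has a Seifert component missing the boundary, and Behrstock--Neumann separate it from the all-boundary class. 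Your purely formal step --- bisimilarity with an all-black tree forces $T_r$ to be all black, since weak coverings preserve colors --- survives once the colors are reinterpreted, and your ``saturation'' check in $(4)\Rightarrow(3)$ reduces to the observation that the non-join hypothesis forces $T_r$ to have at least two vertices, whence any all-black tree weakly covers the single all-black edge. But as written, the geometric justifications in both $(4)\Rightarrow(3)$ and $(1)\Rightarrow(4)$ rest on a false reading of the construction and would need to be redone with the correct meaning of white.
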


We remark that a visual decomposition tree of a such graph $\Gamma$ as above is a colored tree whose vertices are colored by black and white and it is constructed in Construction~\ref{cons:keyidea2}. By the above theorem, if the defining graph $\Gamma$ that has a visual decomposition tree $T_r$ containing at least one white vertex (see Example \ref{niceexample}), then the right-angled Coxeter group $G_\Gamma$ is not quasi-isometric to any right-angled Artin group.

\subsubsection{Quasi-isometric classification of high dimensional right-angled Coxter groups} %{Generalization to high dimension right-angled Coxeter groups}
As we discuss above, the key tool of the proof of quasi-isometric classification of $\mathcal{CFS}$ right-angled Coxeter groups $G_\Gamma$ with defining graphs satisfying Standing Assumptions (see (\ref{haha}) in Theorem~\ref{thm:graphmanifold}) is to decompose $\Gamma$ into a tree of suspensions of distinct points.
We develop this idea to study right-angled Coxeter groups whose nerves belongs to a collections $\KK_n$ ($n\geq 1$) of certain $n$-dimensional flag complexes which can be decomposed as a tree of simpler flag complexes (see Definition \ref{d2}). We remark that the 1-skeleton of each flag complex in $\KK_n$ is always $\mathcal{CFS}$ and $\KK_1$ is actually the collection of all $\mathcal{CFS}$, non-join graphs satisfying Standing Assumptions.

Each flag complex $K$ in $\KK_n$ (by definition) can be constructed from a $\pf$-bipartite $T$ in a collection $\TT_n$ (see Definitions \ref{d0} and \ref{d2}). The tree $T$ is colored in a way to be described in Section~\ref{s3} and we apply the concept of bisimilarity on such tree $T$ to give a complete quasi-isometric classification of each collection of right-angled Coxeter groups $\{G_K\}_{K\in \KK_n}$.

\begin{thm}
\label{thm:quasihigherdim}
Let $K$ and $K'$ be two flag complexes in $\KK_n$ and we assume that $K$ and $K'$ can be constructed from two trees $T$ and $T'$ in $\TT_n$. Then two right-angled Coxeter groups $G_K$ and $G_{K'}$ are quasi-isometric if and only if two colored trees $T$ and $T'$ are bisimilar after possibly reordering the $\p$-colors by an element of the symmetric group on $2n+2$ elements.
\end{thm}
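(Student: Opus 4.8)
The plan is to mirror the two-dimensional argument behind Theorem~\ref{thm:graphmanifold}(\ref{haha}) and lift the Behrstock--Neumann bisimilarity calculus for graph manifolds to the higher-dimensional tree of spaces carried by a complex $K\in\KK_n$. The starting point is the observation that the $\pf$-bipartite tree $T$ of Definitions~\ref{d0} and \ref{d2} organizes the Davis complex $\Sigma_K$ (equivalently a Cayley graph of $G_K$) as a tree of spaces: the vertex spaces are Davis complexes of the elementary suspension-type pieces and the edge spaces are Davis complexes of their pairwise intersections. Each vertex piece is an iterated suspension of discrete sets, so the associated right-angled Coxeter group is a direct product of finitely many infinite dihedral groups with a free product of copies of $\Z/2$; its Davis complex is therefore quasi-isometric to a product of a Euclidean factor with a tree, the high-dimensional analogue of the $\Hyp^2\times\R$ Seifert pieces occurring when $n=1$. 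Crucially, all pieces sharing the same $\p$-color type are uniformly quasi-isometric to one fixed model, which is what makes a Behrstock--Neumann style comparison possible.

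For the ``if'' direction I would first dispose of the reordering by the symmetric group on the $2n+2$ $\p$-colors. These colors correspond to the poles of the suspension coordinates, and although an arbitrary permutation of them need not be induced by an automorphism of $K$, it changes each model vertex piece only by relabeling the factors of a product, hence by a uniformly bilipschitz map. Absorbing this permutation, it remains to treat the color-preserving case: given a genuine bisimulation between $T$ and $T'$, I would build a quasi-isometry $G_K\to G_{K'}$ by the standard unfolding procedure, propagating the matching of vertex and edge types along the two trees and gluing the resulting piecewise identifications. Uniform quasi-isometry of pieces of a fixed type, together with bounded geometry of the edge spaces, guarantees that the local maps assemble into a global quasi-isometry.

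The ``only if'' direction is the rigidity statement and is where I expect the main difficulty. The goal is to show that any quasi-isometry $\phi\colon G_K\to G_{K'}$ coarsely preserves the tree-of-spaces decomposition, sending vertex spaces to vertex spaces and edge spaces to edge spaces up to bounded error. Unlike the graph-manifold case, there is no ambient manifold or JSJ theorem available, so the decomposition must be recovered purely coarsely: I would characterize the vertex spaces through their top-dimensional product regions (the maximal flats arising from the iterated suspensions), which are quasi-isometrically detectable, and identify the edge spaces as the lower-dimensional flats along which adjacent product regions are glued, using the low (linear or quadratic) divergence of the pieces to separate them from the rest of the space. Granting such a coarse recognition, the quasi-isometry induces a correspondence between the coarse pieces of $\Sigma_K$ and $\Sigma_{K'}$; tracking how this correspondence behaves along adjacencies of pieces yields a bisimulation between the colored trees $T$ and $T'$, well defined only up to a single global permutation of the $2n+2$ $\p$-colors coming from the ambiguity in labeling poles. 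Confirming that this permutation is one element of the symmetric group on $2n+2$ elements and that the remaining colors are matched exactly completes the equivalence; the heart of the argument, and the principal obstacle, is the dimension-independent coarse recognition of the vertex and edge spaces.
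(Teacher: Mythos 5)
Your overall architecture does coincide with the paper's: the proof there is exactly to build the geometric model $Y_K$ of Subsection~\ref{s3} --- pieces $P_v=\Sigma_{S^{n-1}_v}\times F(T_{w(v)})$, i.e.\ $\RR^n$ times a fattened tree, glued along copies of the Davis complexes of the edge $n$--spheres according to the Bass--Serre tree of the splitting --- and then to repeat Sections~3 and~4 of \cite{MR2727658} line by line; your tree-of-spaces setup, your unfolding construction for sufficiency, and your coarse recognition of pieces for necessity all track that route.

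The genuine gap is your reading of the $2n+2$ $\p$-colors, and it propagates into both directions. The colors are not ``poles of the suspension coordinates'': the color set is $C_2=\{c,b_0,\dots,b_n,w_0,\dots,w_n\}$, where a $\p$-vertex labelled $i\in\II_n$ receives $b_i$ when its weight strictly exceeds its valence and $w_i$ when the two are equal; that is, the $n+1$ labels are doubled by a black/white datum recording whether the piece $P_v$ has boundary components not shared with any other piece. This datum is precisely what separates the RACG case from the RAAG case of \cite{MR2727658}: there each piece $S_{k+1}\times T^n$ has $k+1$ boundary components of which at most $k$ are glued, so every piece retains free boundary and $n+1$ colors suffice, whereas here a piece can have all $w(v)$ of its boundary components glued (see the remark after the construction in Subsection~\ref{s3}). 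As in Behrstock--Neumann's black/white coloring of graph manifolds \cite{MR2376814}, and as Theorem~\ref{thm:graphmanifold}(\ref{haha}) and Example~\ref{niceexample} already show for $n=1$, the boundary pattern is quasi-isometrically relevant to the matching of pieces. Consequently your disposal of the color permutation --- ``it changes each model vertex piece only by relabeling the factors of a product, hence by a uniformly bilipschitz map'' --- proves too much and detects nothing: all pieces are abstractly bilipschitz to one another anyway (fattened trees of any valences at least three are quasi-isometric), so abstract equivalence of individual pieces was never the issue. What the unfolding construction and the rigidity argument must track, for each matched pair of pieces, is both the $\II_n$-label propagated through the flip gluings and the presence or absence of free boundary, and your proposed bisimulation records only the former; in the necessity direction the coarse recognition must likewise detect which pieces contain boundary components of $Y_K$ unshared with neighbors, not merely locate the maximal product regions. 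Without the $b_i$/$w_i$ distinction your argument cannot distinguish, for example, the two groups of Figure~\ref{afifth}, whose decomposition data agree except that one vertex is white instead of black and which are not quasi-isometric.
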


In \cite{MR2727658}, Behrstock-Januszkiewicz-Neumann study quasi-isometry classification of some high dimensional RAAGs. The nerves of these groups can also be constructed from a tree of certain flag complexes of high dimension. Behrstock-Januszkiewicz-Neumann use the tree structure of the nerves to construct geometric models of the corresponding RAAGs to study the quasi-isometry classification of these such groups. The reader can observe that the strategy of the proof of Theorem~\ref{thm:quasihigherdim} (see Subsection~\ref{s3}) is similar to the one for quasi-isometry classification of RAAGs in \cite{MR2727658}. In fact, we also study quasi-isometry classes of our RACGs by constructing their geometric models. However, the such geometric models are not totally identical to the ones in \cite{MR2727658} and they are actually required certain nontrivial techniques. Moreover, our collection of RACGs is ``richer'' and it ``includes'' the collection of RAAGs in \cite{MR2727658} in term of quasi-isometry classes of both collections (see Theorem~\ref{compare}).

%By the works in \cite{MR2874959}, \cite{MR3664526}, \cite{KMT}, and \cite{T2017} right-angled Artin groups have several special properties: their divergence are linear or quadratic, their Morse boundary are empty or totally disconnected, and all their infinite index strongly quasiconvex subgroups are free. Although not all groups in each $\KK_n$ is quasi-isometric to a right-angled Artin group, right-angled Coxeter groups $G_K$ ($K\in K_n$) also have those analogous properties. 

%The first property is the divergence of such groups $G_K$ is quadratic since the 1-skeleton of the flag complex $K$ is non-join and $\mathcal{CFS}$. By an implicit work in \cite{MR3339446} we can also conclude the the Morse boundary of each subgroup $G_K$ ($K\in K_n$) is totally disconnected. However, the proof of the fact each infinite index strongly subgroup of $G_K$ ($K\in K_n$) is virtually free requires certain amount of work (see Subsection~\ref{hihihi}). The main ingredient for the proof is the tree of groups structure of the right-angled Coxeter group $G_K$ with vertex groups and edge groups satisfying certain conditions. Actually, we prove a stronger result that is applied to such tree of groups in general.

\subsubsection{Strongly quasiconvex subgroups of $\mathcal{CFS}$ right-angled Coxeter groups}

One method to understand the structure of a finitely generated group $G$ is to investigate subgroups of $G$ whose geometry reflects that of $G$. Quasiconvex subgroups of hyperbolic groups is a successful application of this approach. However, quasiconvexity is not as useful for arbitrary finitely generated groups since quasiconvexity depends on a choice of generating set and, in particular, is not preserved under quasi-isometry. In \cite{MR3426695}, Durham-Taylor introduce a strong notion of quasiconvexity in finitely generated groups, called \emph{stability}, which is preserved under quasi-isometry.

Stability agrees with quasiconvexity when ambient groups are hyperbolic. However, a stable subgroup of a finitely generated group is always hyperbolic no matter the ambient group is hyperbolic or not (see \cite{MR3426695}). In some sense, the geometry of a stable subgroup does not reflect completely that of the ambient group. In July 2017, the second author in \cite{T2017} introduces another concept of quasiconvexity, called \emph{strong quasiconvexity}, which is strong enough to be preserved under quasi-isometry and reflexive enough to capture the geometry of ambient groups. This notion was also introduced independently by Genevois \cite{Genevois2017} in September 2017 under the name Morse subgroup. 

There is a strong connection between strong quasiconvexity and stability. More precisely, a subgroup is stable if and only if it is strongly quasiconvex and hyperbolic (see \cite{T2017}). Moreover, these notions agree in hyperbolic setting. Outside hyperbolic setting, there are many strongly quasiconvex subgroups that are not stable.

A natural question arises on which non-hyperbolic group $G$ whose all strongly quasiconvex subgroups of infinite index of $G$ are hyperbolic (i.e. stable). In \cite{T2017}, the second author proves that all strongly quasiconvex subgroups of infinite index of one-ended right-angled Artin groups are stable. In a recent paper (see \cite{Heejoung2017}), Kim proves that all strongly quasiconvex subgroups of infinite index of mapping class group of an oriented, connected, finite type surface with negative Euler characteristic are stable. We prove this fact is true for $G_K$ where $K$ is a flag complex in $\KK_n$

\begin{thm}
\label{good}
Let $K$ be a flag complex in $\KK_n$ and $H$ a strongly quasiconvex subgroup of infinite index of the right-angled Coxeter group $G_K$. Then $H$ is virtually free. In particular, $H$ is stable. 
\end{thm}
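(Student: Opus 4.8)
The plan is to run the tree-of-groups structure of $G_K$ furnished by the decomposition of $K$ over a tree $T\in\TT_n$ (the content behind Theorem~\ref{thm:graphmanifold} and its higher-dimensional analogue) through the geometry of strongly quasiconvex subgroups, the engine being a general statement: in a graph of groups whose edge groups are undistorted flats containing the central ``fibers'' of the adjacent vertex groups, a torsion-free strongly quasiconvex subgroup of infinite index acts freely on the Bass--Serre tree. First I would reduce to the torsion-free case. Since $G_K$ is a right-angled Coxeter group it has a finite-index torsion-free subgroup $G'$; then $H':=H\cap G'$ is of finite index in $H$, torsion free, of infinite index in $G'$, and still strongly quasiconvex in $G'$ (finite-index subgroups are quasi-isometric and strong quasiconvexity is a quasi-isometry invariant, with intersections behaving well \cite{T2017}). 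Intersecting the conjugates of the vertex and edge groups with $G'$ equips $G'$ with the same type of decomposition; I will describe the mechanism in the representative case where each vertex group is $\Z\times F$ with $F$ free, central fiber $\Z$, each edge group an undistorted flat containing the fibers of \emph{both} adjacent vertex groups, and the two fibers in an edge independent. It then suffices to prove $H'$ is free, since $H$ would then be virtually free.

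Next I would show that $H'$ acts on the Bass--Serre tree $\mathcal T$ of $G'$ with \emph{trivial edge stabilizers}, i.e.\ $H'\cap gEg^{-1}=1$ for every edge group $E$ and every $g$. If not, after conjugating $H'$ I may assume $H'$ meets some edge flat in a nontrivial element $u$ of infinite order. Routing a fixed-constant quasi-geodesic between the points $1$ and $u^m$ of $H'$ so that it detours a distance of order $m$ transverse to $\langle u\rangle$ inside that flat, and invoking the Morse property, forces $\nbd{H'}{r}$ with $r$ independent of $m$ to contain larger and larger pieces of the flat; letting $m\to\infty$ gives that the whole edge flat, and hence the fiber $t$ of an adjacent vertex group $V\cong\Z\times F$, lies in $\nbd{H'}{r}$. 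Sweeping the flats $\langle t\rangle\times\langle w\rangle$ as $w$ ranges over the free factor, by routing further fixed-constant quasi-geodesics between points of $H'$ lying near the fiber line, then yields $V\subseteq\nbd{H'}{r}$ for a uniform $r$. This flat-filling estimate, together with the uniformity of the Morse gauge (only quasi-geodesics with fixed constants are ever used), is the step I expect to be the crux.

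Because the fibers across an edge are independent, the fiber $t$ of $V$ is a base direction of each neighboring vertex group $V'$ and, with the fiber $t'$ of $V'$, spans a flat there. Since $H'$ already coarsely contains the edge between $V$ and $V'$, the same argument gives $t'\in\nbd{H'}{r}$ and then $V'\subseteq\nbd{H'}{r}$ with the same uniform $r$. Propagating across the finite tree of groups yields $G'=\nbd{H'}{r}$, so every element of $G'$ lies in one of the finitely many right cosets $H'\!s$ with $|s|\le r$; hence $[G':H']<\infty$, contradicting infinite index. Therefore all edge stabilizers of the $H'$-action are trivial, and by Bass--Serre theory $H'$ is the free product of a free group with the vertex stabilizers $H'\cap gVg^{-1}$. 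Each such stabilizer meets the central fiber of $gVg^{-1}$ trivially (that fiber sits inside the now-trivial edge groups), so it injects into the free quotient of $gVg^{-1}$ and is free; a free product of free groups is free, so $H'$ is free.

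Finally, $H$ contains the free group $H'$ with finite index, so $H$ is virtually free, hence hyperbolic; being strongly quasiconvex and hyperbolic, $H$ is stable by the characterization recalled above \cite{T2017}. The main obstacle is precisely the flat-filling estimate of the second paragraph and the uniform control of constants as it is propagated across the tree. In the higher-dimensional setting $K\in\KK_n$ the vertex groups and their fibers are more intricate than $\Z\times F$, but the driving dichotomy is the same: a strongly quasiconvex subgroup either misses the fibers entirely (and is free by projection to the free quotient) or meets a flat and is thereby forced to coarsely engulf a vertex group and, by propagation, to have finite index.
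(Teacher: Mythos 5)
Your skeleton matches the paper's at the top and bottom: pass to $H'=H\cap G'$ with $G'$ torsion free of finite index, kill stabilizers of the action on the Bass--Serre tree to get freeness, and finish with ``strongly quasiconvex $+$ hyperbolic $=$ stable'' (Proposition \ref{p3}). The genuine gap is your propagation step. The finite object is the defining tree $T$; to cover $G'$ you must sweep over the vertices of the \emph{infinite} Bass--Serre tree, i.e.\ over infinitely many cosets and conjugates of vertex groups at unbounded distance from the identity, and your constants degrade at every step. Concretely: after one sweep you only know $V\subseteq\mathcal{N}_r(H')$, so the endpoints of the next family of detouring quasi-geodesics lie merely $r$-close to the next flat, the quasi-geodesic constants become roughly $(3,C+4r)$, and the Morse gauge returns a strictly larger radius $M(3,C+4r)$; nothing bounds the sequence $r_{k+1}=M(3,C+4r_k)$ along a ray of the Bass--Serre tree. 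The seeds are non-uniform as well: an infinite-order $u\in H'\cap gEg^{-1}$ has orbit $\{u^m\}$ only $|g|$-close to the flat coset $gE$ (right translation is not an isometry of the word metric, and the set $gEg^{-1}$ is quasi-isometrically embedded only with additive constant on the order of $|g|$), so even your ``fixed-constant'' flat-filling has constants growing with the location; conjugating $H'$ resets this once, but not simultaneously at every piece. Hence the conclusion $G'=\mathcal{N}_r(H')$ with one uniform $r$, and with it $[G':H']<\infty$, does not follow as written.

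The missing idea --- and the paper's actual route --- is to re-normalize with qualitative algebra instead of chasing constants. Where your flat-filling does work (one piece at a time, with location-dependent radius $r(g)$), a pigeonhole over the finitely many elements of the ball $B_{r(g)}$ upgrades $gVg^{-1}\subseteq\mathcal{N}_{r(g)}(H')$ to the constant-free statement that $H'\cap gVg^{-1}$ has finite index in $gVg^{-1}$; such finite-index intersections then propagate across the entire Bass--Serre tree for free, because consecutive conjugates of vertex groups share infinite edge groups (Lemma \ref{hihi}, Proposition \ref{quahay}). The contradiction with infinite index comes not from coarsely covering $G'$ but from finite height of strongly quasiconvex subgroups (Theorem \ref{lacloi1}): $n+1$ essentially distinct conjugates of $H'$ would all contain finite-index subgroups of a single vertex group and hence intersect in an infinite set. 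In fact the paper bypasses flat-filling entirely: $H\cap G_v$ is strongly quasiconvex in $G_v$ by Proposition \ref{pp3}, and an infinite strongly quasiconvex subgroup of a group with infinite center has finite index by another height argument (Proposition \ref{pp2}: a central $z$ satisfies $zHz^{-1}=H$, so infinitely many distinct cosets $zH$ would violate finite height). The paper also gets vertex stabilizers \emph{trivial} (finite intersection plus torsion-freeness), so the Bass--Serre action is free outright and your Kurosh step --- splitting off vertex stabilizers and projecting them to the free quotient --- is unnecessary. If you patch your propagation with this finite-height/finite-index bookkeeping, your geometric first step survives, but only as a harder substitute for Proposition \ref{pp2}.
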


We remark here that not all $\mathcal{CFS}$ right-angled Coxeter groups has the property that all infinite index strongly quasiconvex subgroups are all virtually free (or even hyperbolic). We refer the reader to Example~\ref{hayhayqua} for this fact.

The main ingredient for the proof of Theorem \ref{good} is the tree of groups structure of the right-angled Coxeter group $G_\Gamma$ with vertex groups and edge groups satisfying certain conditions. Actually, we prove a stronger result that is applied to such tree of groups in general. More precisely,

\begin{prop}
\label{vuiqua}
Assume a group $G$ is decomposed as a finite graph $T$ of groups that satisfies the following.
\begin{enumerate}
\item For each vertex $v$ of $T$ the vertex group $G_v$ is finitely generated and undistorted. Moreover, any strongly quasiconvex, infinite subgroup of $G_v$ is of finite index.
\item Each edge group is infinite.
\end{enumerate}
Then, if $H$ is a strongly quasiconvex, torsion free subgroup of $G$ of infinite index, then $H$ is a free subgroup.
\end{prop}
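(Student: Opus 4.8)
The plan is to realize $G$ as the fundamental group of the given finite graph of groups, to let $G$ act on the associated Bass--Serre tree $\tilde T$, and then to restrict this action to $H$. Since the quotient $G\backslash\tilde T$ is finite, the vertex and edge stabilizers of the $G$--action are exactly the conjugates of the vertex and edge groups. Because $H$ is torsion free, it suffices to show that every vertex stabilizer of the $H$--action, namely $H\cap\Stab(\tilde v)$, is finite: for then these stabilizers (and hence the edge stabilizers contained in them) are trivial, the $H$--action is free, and $H$ is free by Bass--Serre theory. The first ingredient I would use is that strong quasiconvexity passes to intersections with undistorted subgroups, i.e.\ if $H$ is strongly quasiconvex in $G$ and $K\le G$ is undistorted then $H\cap K$ is strongly quasiconvex in $K$; this is where the undistortedness in hypothesis~(1) enters, and it follows from the properties of strongly quasiconvex subgroups in \cite{T2017}. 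Applying this with $K=\Stab(\tilde v)$, a conjugate of some $G_v$ and hence undistorted, the group $H\cap\Stab(\tilde v)$ is strongly quasiconvex in $\Stab(\tilde v)$, so by hypothesis~(1) it is either finite or of finite index in $\Stab(\tilde v)$.

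Suppose, for contradiction, that some $H\cap\Stab(\tilde v_0)$ is infinite, hence of finite index in $\Stab(\tilde v_0)$. I would first propagate this across $\tilde T$: for an edge $\tilde e$ at $\tilde v_0$ the stabilizer $\Stab(\tilde e)$ is infinite by hypothesis~(2), and $H\cap\Stab(\tilde e)=(H\cap\Stab(\tilde v_0))\cap\Stab(\tilde e)$ has finite index in $\Stab(\tilde e)$; in particular it is infinite, so the stabilizer of the other endpoint of $\tilde e$ is infinite and therefore, by the previous paragraph, again of finite index. Since $\tilde T$ is connected, \emph{every} vertex stabilizer of the $H$--action is of finite index in the corresponding $\Stab(\tilde v)$. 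A finite index subgroup has only finitely many orbits on the edges issuing from a vertex, so the quotient graph $H\backslash\tilde T$ is locally finite.

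Now I would split according to whether $H\backslash\tilde T$ is finite. If it is finite, then summing the finite indices $[\Stab(\tilde v):H\cap\Stab(\tilde v)]$ over the finitely many $H$--orbits of vertices computes $[G:H]$ and shows it is finite, contradicting infinite index. If $H\backslash\tilde T$ is infinite, then being locally finite it contains a geodesic ray by K\"onig's lemma, which lifts to vertices $\tilde w_i$ with $d_{\tilde T}(\tilde w_i,H\cdot\tilde u_0)\to\infty$, each carrying an infinite, hence nontrivial, stabilizer $H\cap\Stab(\tilde w_i)$. Choosing $a_i\in H\cap\Stab(\tilde w_i)$ that moves the edge of $[\tilde u_0,\tilde w_i]$ at $\tilde w_i$, the tree geodesic $[\tilde u_0,a_i\tilde u_0]$ runs through $\tilde w_i$. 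Pushing a Cayley geodesic of $G$ from $e$ to $a_i$ (both endpoints lie in $H$) forward under the coarsely Lipschitz orbit map $g\mapsto g\tilde u_0$ gives a path in $\tilde T$ from $\tilde u_0$ to $a_i\tilde u_0$, which in a tree must pass within bounded distance of $\tilde w_i$; the corresponding point of the Cayley geodesic then lies at distance tending to infinity from $H$. For large $i$ this violates the defining Morse property of the strongly quasiconvex subgroup $H$. In either case we reach a contradiction, so all vertex stabilizers are finite and $H$ is free.

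The conceptual heart, and the step I expect to be the main obstacle, is this last case: converting the combinatorial fact that $H$ fails to act cocompactly on $\tilde T$ into an honest quasigeodesic between two elements of $H$ that escapes every neighborhood of $H$, which is precisely what strong quasiconvexity forbids. Two technical points need care. First, one must secure the ``rotating'' elements $a_i$; this is automatic when the relevant edge groups have infinite index in the vertex groups, while the degenerate configurations in which they do not (the same phenomenon that prevents a vertex group from itself being strongly quasiconvex here) must be handled separately. Second, the inheritance lemma for strong quasiconvexity under intersection with undistorted subgroups, together with the index bookkeeping in the finite case, must be set up carefully; both are routine given the results of \cite{T2017} and Bass--Serre theory.
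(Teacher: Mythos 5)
Your skeleton coincides with the paper's right up to the final contradiction: like the paper, you intersect $H$ with conjugates of vertex groups via Proposition 4.11 of \cite{T2017} (Proposition~\ref{pp3} here), propagate the ``infinite implies finite index'' alternative across the Bass--Serre tree using the infinitude of the edge groups (this is exactly Lemma~\ref{hihi} together with the first half of Proposition~\ref{quahay}), and finish by letting the torsion-free $H$ act freely on $\tilde{T}$ (Proposition~\ref{hayqua}). Your finite-quotient case is also correct: counting right $H$--cosets inside each double coset $Hg\Stab(\tilde{v})$ gives $[\Stab(g\tilde{v}):H\cap\Stab(g\tilde{v})]$, and summing over the finitely many $H$--orbits in a single $G$--orbit of vertices shows $[G:H]<\infty$, the desired contradiction.

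The genuine gap is the one you flag yourself and then defer: securing the rotating elements $a_i\in H\cap\Stab(\tilde{w}_i)$ that move the edge of $[\tilde{u}_0,\tilde{w}_i]$ at $\tilde{w}_i$. Hypotheses (1) and (2) do not prevent an edge group from having finite index in, or even equalling, a vertex group (for instance, amalgams of copies of $\ZZ^2$ over finite-index subgroups satisfy both hypotheses, since the infinite, infinite-index subgroups of $\ZZ^2$ are cyclic and never Morse). In such a configuration $H\cap\Stab(\tilde{w}_i)$, being merely of finite index in $\Stab(\tilde{w}_i)$, may lie entirely inside the stabilizer of the edge pointing toward $\tilde{u}_0$ --- indeed it may fix the whole segment $[\tilde{u}_0,\tilde{w}_i]$ pointwise --- and then $[\tilde{u}_0,a_i\tilde{u}_0]$ degenerates and no geodesic between elements of $H$ escaping a neighborhood of $H$ is produced; the fallback of choosing $a_i$ not fixing $\tilde{u}_0$ only forces the geodesic through a branch point, which may remain close to $H\tilde{u}_0$. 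So case two of your argument is incomplete in exactly the degenerate situation you predicted, and it is unclear it can be patched by elementary means without in effect reproving finiteness of height. The paper sidesteps the entire dichotomy at the point where you have established that every $gHg^{-1}\cap G_v$ has finite index in $G_v$: choosing $n+1$ distinct cosets $g_iH$ (available since $[G:H]=\infty$), the intersection $\bigl(\bigcap_i g_iHg_i^{-1}\bigr)\cap G_v$ is a finite intersection of finite-index subgroups of the infinite group $G_v$, hence infinite, contradicting the finite height of strongly quasiconvex subgroups (Theorem 1.2 of \cite{T2017}, quoted as Theorem~\ref{lacloi1}). Importing that theorem at this juncture would close your gap and make both of your cases unnecessary.
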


\subsection{Right-angled Coxeter groups with arbitrary defining graphs satisfying Standing Assumptions}

In general case (when the graph $\Gamma$ is not necessarily $\mathcal{CFS}$), we prove that if $\Gamma$ satisfies Standing Assumptions, the associated right-angled Coxeter group $G_\Gamma$ is hyperbolic relative to a certain collection of $\mathcal{CFS}$ right-angled Coxeter subgroups. More precisely, 

\begin{thm}
\label{sosonice}
Let $\Gamma$ be a graph satisfying Standing Assumptions. There is a collection $\JJ$ of $\mathcal{CFS}$ subgraph of $\Gamma$ such that the right-angled Coxeter group $G_\Gamma$ is relatively hyperbolic with respect to the collection $\PP=\set{G_J}{J\in \JJ}$. 
\end{thm}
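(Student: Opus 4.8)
The plan is to produce $\JJ$ explicitly and then feed it into the combinatorial characterization of relative hyperbolicity for right-angled Coxeter groups due to Caprace \cite{MR2665193, MR3450952} (see also Behrstock--Hagen--Sisto \cite{MR3623669} and Dani--Thomas \cite{MR3314816}). The first thing I would record is that the Standing Assumptions make the situation two-dimensional: since $\Gamma$ is triangle-free its flag complex is $\Gamma$ itself, so $G_\Gamma$ acts geometrically on a $2$--dimensional $\CAT(0)$ cube complex. In such a complex the only source of a $\Z^2$--flat is a pair of commuting infinite dihedral subgroups $\gen{a,c}\times\gen{b,d}$, which is exactly the data of an induced $4$--cycle $a\,b\,c\,d$ (a \emph{square}), with $\{a,c\}$ and $\{b,d\}$ its two diagonals. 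Thus every non-hyperbolic feature of $G_\Gamma$ is carried by squares and by the way squares are amalgamated along shared diagonals, which is precisely what the $\mathcal{CFS}$ condition of Definition~\ref{CFS} records.

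Next I would build the \emph{square graph} $\Gamma^{\square}$, whose vertices are the induced squares of $\Gamma$ and in which two squares are joined by an edge when they share a diagonal. For each connected component $C$ of $\Gamma^{\square}$ let $J_C$ be the full subgraph of $\Gamma$ spanned by the support of $C$, that is, by the union of the vertex sets of the squares lying in $C$, and set $\JJ=\set{J_C}{C\text{ a component of }\Gamma^{\square}}$. (When $\Gamma$ is itself $\mathcal{CFS}$ this collapses to $\JJ=\{\Gamma\}$ and the statement is vacuous, so the content is entirely in the non-$\mathcal{CFS}$ case.) By construction the squares of $C$ are connected through shared diagonals and their supports exhaust $V(J_C)$, so each $J_C$ is $\mathcal{CFS}$; and every induced square of $\Gamma$ lies in the support of exactly one component, hence in some $J\in\JJ$. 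In particular every flat of $G_\Gamma$ is contained in a peripheral subgraph, and the $J_C$ are maximal flat-connected subgraphs by construction.

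With $\JJ$ in hand, the desired conclusion would follow from Caprace's criterion, which in our two-dimensional setting reduces to two requirements on the collection: \textbf{(i)} every induced square of $\Gamma$ is contained in some $J\in\JJ$; and \textbf{(ii)} for distinct $J,J'\in\JJ$ the intersection $J\cap J'$ is a complete subgraph. Condition (i) is immediate from the construction, so the entire weight of the argument falls on the \emph{isolation} condition (ii). I note that (ii) is genuinely not automatic: two squares that share two non-adjacent vertices share a common diagonal and therefore lie in the same component, so squares from different components meet in at most an edge; but a priori $\operatorname{supp}(C)\cap\operatorname{supp}(C')$ could still contain a non-adjacent pair $u,v$ in which $u$ and $v$ are witnessed by \emph{different} squares of each component, with no single square certifying the overlap.

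The hard part will be ruling out exactly this configuration, and this is where planarity together with the absence of separating vertices and edges must be used. The mechanism I expect to establish is the following. Given such a non-adjacent pair $u,v$ lying in the supports of two different components $C\neq C'$, I would use a planar embedding of $\Gamma$ to choose, inside each component, a chain of diagonally-adjacent squares realizing an arc from a square through $u$ to a square through $v$; the two arcs (one in $C$, one in $C'$) together with $u$ and $v$ then bound a disc in the plane. Triangle-freeness forces the innermost such disc to be an induced $4$--cycle having $\{u,v\}$ as a diagonal, which would place the two arcs in a common component and contradict $C\neq C'$; the only alternative is that $\{u,v\}$ (or a single vertex) disconnects $\Gamma$, contradicting the Standing Assumptions. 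Hence $J\cap J'$ contains no non-adjacent pair, so it is complete, and triangle-freeness makes it a vertex or an edge. Once (ii) is secured, Caprace's criterion yields that $G_\Gamma$ is hyperbolic relative to $\PP=\set{G_J}{J\in\JJ}$, and since each $J$ is $\mathcal{CFS}$ this is the required collection. As a consistency check, the peripherals have linear or quadratic divergence by Dani--Thomas and Levcovitz, so relative hyperbolicity forces the advertised trichotomy: $G_\Gamma$ is itself $\mathcal{CFS}$ (linear or quadratic divergence) or genuinely relatively hyperbolic with $\mathcal{CFS}$ peripherals (exponential divergence).
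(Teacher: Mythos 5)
Your construction of $\JJ$ (supports of components of the square-graph $\Gamma^4$) is a reasonable intrinsic reformulation of the paper's collection $\JJ_1\cup\JJ_2$, but the proof as proposed has a genuine logical gap: Caprace's criterion (Theorem~\ref{th1}) has \emph{three} conditions, not two, and your claim that it ``reduces to two requirements'' in the two-dimensional setting is false. Condition (3) --- if a vertex $s$ commutes with two non-adjacent vertices $u,v$ of some $J\in\JJ$, then $s\in J$ --- does not follow from (1) and (2). One can partially rescue it: if $u,v$ have a second common neighbor $t$, triangle-freeness makes $u\,s\,v\,t$ an induced square, which by (1) lies in some $J'$, and by (2) $J'=J$, so $s\in J$. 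But if $s$ is the \emph{unique} common neighbor of $u$ and $v$, no square certifies $s$, and condition (3) becomes a substantive claim requiring its own planarity argument. It is not vacuous: dropping planarity, take the suspension of $\{b_1,b_2,c,d\}$ over poles $\{a_2,a_3\}$, add $a_1$ adjacent to $b_1,b_2$ and $s$ adjacent to $a_1,c$ only; this is triangle-free with no separating vertices or edges, all squares lie in one component of $\Gamma^4$ whose support is everything except $s$, and $s$ commutes with the non-adjacent pair $a_1,c$ in the support --- so condition (3) fails and, since Theorem~\ref{th1} is an equivalence, $G_\Gamma$ is \emph{not} hyperbolic relative to your collection. (This graph has a $K_{3,3}$ minor, obtained by contracting the edge $sc$, so it does not contradict Theorem~\ref{sosonice}; but it shows your reduction is wrong as stated and that planarity must enter condition (3), not only condition (2).) A second, smaller defect: as literally defined your $\JJ$ can contain nested members --- e.g.\ in the graph of Figure~\ref{afourth} (Example~\ref{hayhayqua}) the red square is an isolated component of $\Gamma^4$ whose vertices lie in the support of the big component, so $J\cap J'$ contains a non-adjacent pair and condition (2) fails; you must prune such configurations or rule them out, and in the non-$\mathcal{CFS}$ case ruling them out is again a planarity statement, not a formality.

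Beyond these gaps, your verification of the isolation condition (2) is only a sketch (``the mechanism I expect to establish\dots''), and the innermost-disc claim carrying it is precisely the nontrivial content that the paper develops at length. The paper's route is structurally different: it first builds the tree decomposition of $\Gamma$ along strongly separating $4$--cycles into prime pieces (Lemmas~\ref{leb}--\ref{le3}, Proposition~\ref{prop:keyidea1}), then proves the \emph{local} statement (Proposition~\ref{nice1}) that for a single prime non-suspension piece $\Gamma_{v_0}$, the group $G_\Gamma$ is hyperbolic relative to the branches $K_e$ together with the $4$--cycles of $\Gamma_{v_0}$ --- verifying all three Caprace conditions there, including condition (3) via the separation structure (the arguments showing a vertex adjacent to a non-adjacent pair of $K_e$ lies in $K_e$, on pain of producing a suspension of three points in $\Gamma_{v_0}$) --- and finally assembles the global collection $\JJ_1\cup\JJ_2$ by induction using transitivity of relative hyperbolicity (Theorem~\ref{dru}). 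This inductive localization is exactly what lets the paper avoid the one-shot global verification you attempt; if you want to keep your direct approach, you must (a) state and prove the planarity lemma behind condition (2), (b) prove the unique-common-neighbor case of condition (3) as a separate planarity lemma, and (c) handle nested supports, at which point you will have reproduced most of the paper's Section~\ref{sec:graphdecomposition} in different language.
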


For the proof of Theorem~\ref{sosonice} we carefully investigate the tree structure of the defining graph and use results in \cite[Theorem A']{MR2665193, MR3450952} and \cite[Corollary 1.14]{MR2153979} to figure out the relatively hyperbolic structure of group $G_\Gamma$. The investigation of the such tree structure for proof of Theorem~\ref{sosonice} is quite technical and we refer the reader to Section~\ref{nice3} for the details.

By exploring the relatively hyperbolic structure of groups in Theorem~\ref{sosonice} we can take an advantage on Theorem~\ref{thm:graphmanifold} to study quasi-isometry classification of right-angled Coxeter groups even in the case of non-$\mathcal{CFS}$ defining graphs. In fact by Theorem \ref{sosonice} these such groups are relatively hyperbolic with respect to collections of $\mathcal{CFS}$ right-angled Coxeter groups. Therefore, if we know the difference in term of quasi-isometry between two such peripheral structures of two relatively hyperbolic groups $G_\Omega$ and $G_{\Omega'}$ by Theorem \ref{thm:graphmanifold}, we can distinguish $G_\Omega$ and $G_{\Omega'}$ also in term of quasi-isometry. We refer the reader to Example~\ref{newnew} for this application.

%In \cite{MR3314816} and \cite{MR3623669}, Dani-Thomas and Behrstock-Hagen-Sisto prove that the divergence of a right-angled Coxeter group $G_\Gamma$ is linear if and only if $\Gamma$ splits as a non-trivial join. Meanwhile, Dani-Thomas \cite{MR3314816} and Levcovitz \cite{IL} also prove that a right-angled Coxeter group $G_\Gamma$ has quadratic divergence if and only if $\Gamma$ is $\mathcal{CFS}$ and not a join. Moreover, they also prove that if $\Gamma$ is not $\mathcal{CFS}$, then the divergence of $G_\Gamma$ is at least cubic. However, the divergence functions of one-ended right-angled Coxeter groups $G_\Gamma$ of planar, triangle-free graphs $\Gamma$ are quite simple by using Theorem \ref{sosonice}. More precisely,

Theorem~\ref{sosonice} also contributes to study the divergence of right-angled Coxeter groups. Behrstock-Hagen-Sisto in \cite{MR3623669} show that the divergence of a one-ended right-angled Coxeter group is either exponential or bounded above by a polynomial. Dani-Thomas in \cite{MR3314816} also show that for every positive integer $d$, there is a right-angled Coxeter group with divergence $x^d$. However by combining Theorem \ref{sosonice} with results in \cite[Theorem 1.1]{MR3314816} and \cite[Theorem 1.3]{Sisto}, the divergence functions of one-ended right-angled Coxeter groups $G_\Gamma$ of planar, triangle-free graphs $\Gamma$ are quite simple. More precisely,
\begin{cor}
\label{cor:divofracg}
Let $\Gamma$ be a graph satisfying Standing Assumptions. Then the divergence of the right-angled Coxeter group $G_\Gamma$ is linear, or quadratic, or exponential.
\end{cor}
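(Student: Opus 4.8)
The plan is to argue by cases according to whether $\Gamma$ is $\mathcal{CFS}$, after first recording that the Standing Assumptions force $G_\Gamma$ to be one-ended. Indeed, since $\Gamma$ is triangle-free every complete subgraph has at most two vertices, so the hypotheses that $\Gamma$ is connected with no separating vertex and no separating edge rule out any separating clique; by the standard one-endedness criterion for right-angled Coxeter groups, $G_\Gamma$ is then one-ended, which is what is needed to speak meaningfully of its divergence function and to invoke the cited results.

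If $\Gamma$ is $\mathcal{CFS}$, I would simply quote the characterization of Dani--Thomas \cite[Theorem 1.1]{MR3314816} and Levcovitz \cite{IL} already recalled in the introduction: the divergence of $G_\Gamma$ is linear or quadratic exactly when $\Gamma$ is $\mathcal{CFS}$. This disposes of the $\mathcal{CFS}$ case at once and produces the linear and quadratic possibilities in the statement.

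If $\Gamma$ is not $\mathcal{CFS}$, then by Theorem~\ref{sosonice} the group $G_\Gamma$ is hyperbolic relative to the collection $\PP = \{\, G_J \mid J \in \JJ \,\}$, where each $J \in \JJ$ is a $\mathcal{CFS}$ subgraph of $\Gamma$. Because $\Gamma$ itself is not $\mathcal{CFS}$, every such $J$ is a proper subgraph, so this is a genuine relatively hyperbolic structure with proper peripheral subgroups. I would then observe that each peripheral group $G_J$ has at most quadratic divergence: as $J$ is $\mathcal{CFS}$, the Dani--Thomas/Levcovitz criterion shows the divergence of each (one-ended) $G_J$ is linear or quadratic. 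Feeding this into Sisto's result on divergence of relatively hyperbolic groups \cite[Theorem 1.3]{Sisto}, the divergence of $G_\Gamma$ is either comparable to the supremum of the peripheral divergences --- hence linear or quadratic --- or else exponential. In every case the divergence lands in $\{\,\text{linear},\,\text{quadratic},\,\text{exponential}\,\}$, as claimed; note in particular that it is the planarity-driven conclusion that the $\mathcal{CFS}$ peripherals have divergence of degree at most two which prevents any higher polynomial degree from appearing.

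The main obstacle will be the precise application of Sisto's theorem in the non-$\mathcal{CFS}$ case. I must check that the relatively hyperbolic structure furnished by Theorem~\ref{sosonice} satisfies the hypotheses of \cite[Theorem 1.3]{Sisto}, in particular that the peripheral subgroups are the right objects (handling any degenerate $J$ for which $G_J$ fails to be one-ended) and that the transversal directions through the hyperbolic part genuinely force the exponential bound whenever the peripheral divergences do not already dominate. By contrast, the remaining ingredients --- deducing one-endedness from the Standing Assumptions and extracting the linear/quadratic dichotomy for each $\mathcal{CFS}$ peripheral --- are routine consequences of the results cited above.
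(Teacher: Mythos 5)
Your proposal follows essentially the same route as the paper: the paper assembles this corollary (without a formal proof environment) exactly as you do, splitting on whether $\Gamma$ is $\mathcal{CFS}$, quoting \cite[Theorem 1.1]{MR3314816} in the $\mathcal{CFS}$ case, and combining Theorem~\ref{sosonice} with \cite[Theorem 1.3]{Sisto} in the non-$\mathcal{CFS}$ case; your observation that non-$\mathcal{CFS}$ forces every $J\in\JJ$ to be proper, and your one-endedness check (triangle-free plus no separating vertex or edge rules out separating cliques), are correct and make explicit what the paper leaves implicit. The one substantive inaccuracy is your paraphrase of Sisto's theorem: \cite[Theorem 1.3]{Sisto} gives an unconditional exponential \emph{lower} bound on the divergence of a one-ended group that is hyperbolic relative to a collection of proper subgroups --- there is no alternative branch in which the divergence is instead comparable to the supremum of the peripheral divergences. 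Consequently, in the non-$\mathcal{CFS}$ case the divergence is always exponential, the linear/quadratic branch of your claimed dichotomy never occurs, and your computation that the $\mathcal{CFS}$ peripherals have linear or quadratic divergence is superfluous (it is also slightly delicate as stated, since applying Theorem~\ref{dt} to each $G_J$ would require checking that the graphs in $\JJ$ have no separating vertices or edges). Note also that the exactness of ``exponential'' needs a matching upper bound, which you do not supply under the correct reading of Sisto; it comes for free from Theorem~\ref{bh} of Behrstock--Hagen--Sisto (or from the $\CAT(0)$ geometry of the Davis complex). Your proof of the corollary as literally stated nevertheless goes through, because even your weaker version of the cited theorem confines the divergence to the set $\{\text{linear},\ \text{quadratic},\ \text{exponential}\}$; but you should repair the citation, both because it misattributes a statement to Sisto and because the dichotomy you assert would wrongly suggest that a non-$\mathcal{CFS}$ graph could yield subexponential divergence, contradicting Theorem~\ref{bh}.
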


\subsection{Overview}
In Section~\ref{sec:preliminaries} we review some concepts in geometric group
theory and $3$--manifold theory. In Section~\ref{sec:graphdecomposition} we study the ``tree structure'' of graphs satisfying Standing Assumption. In Section~\ref{sec:racgandgraphmanifold}, we study right-angled Coxeter groups with planar defining graph. We give the proof of Theorem~\ref{thm:graphmanifold} and Theorem~\ref{thm:racgraag} in Section~\ref{nice2}. The proof of Theorem~\ref{sosonice} is given in Section~\ref{nice3}. In Section~\ref{sec:generalization} we generalizes Theorem~\ref{thm:graphmanifold} to a certain high dimensional right-angled Coxeter groups. We give the proof of Theorem~\ref{thm:quasihigherdim} in Section~\ref{s3}. In Section~\ref{sqc}, we study strongly quasiconvex subgroups of $\mathcal{CFS}$ right-angled Coxeter groups. We give proofs of Theorem~\ref{good} and Proposition~\ref{vuiqua} in Section~\ref{hihihi}.

\subsection{Acknowledgments}
The authors would like to thank Chris Hruska and Jason Behrstock for their very helpful conversations and suggestions. The authors are grateful for the insightful comments of the referees that have helped improve the exposition of this paper. We especially appreciate the referee's suggestions on Lemma~\ref{hihi} and Proposition~\ref{hayqua} that lead to a stronger version of Proposition~\ref{vuiqua}.

\section{Preliminaries}
\label{sec:preliminaries}

In this section, we review some concepts in geometric group theory and 3-manifold theory: right-angled Coxeter groups, Davis complexes, right-angled Artin groups, relatively hyperbolic groups, graph manifolds, and mixed manifold. We discuss the work of Caprace \cite{MR2665193, MR3450952}, Behrstock-Hagen-Sisto \cite{MR3623669}, and Dani-Thomas \cite{MR3314816} on peripheral structures of relatively hyperbolic right-angled Coxeter groups and divergence of right-angled Coxeter groups. We also discuss the work of Gersten \cite{Gersten94} and Kapovich--Leeb \cite{KL98} on divergence of $3$--manifold groups. We also mention the concept of colored graphs and the bisimilarity equivalence relation on these such graphs. Lastly, we review the works of Behrstock-Neumann \cite{MR2376814} and Gordon \cite{MR2199348} on connections between right-angled Artin groups and 3--manifold groups.% and quasi-isometry classification of graph manifolds.

\subsection{Right-angled Coxeter groups and their relatively hyperbolic structures}

We first review the concepts of right-angled Coxeter groups and Davis complexes.

\begin{defn}
Given a finite simplicial graph $\Gamma$, the associated \emph{right-angled Coxeter group} $G_\Gamma$ is generated by the set S of vertices of $\Gamma$ and has relations $s^2 = 1$ for all $s$ in $S$ and $st = ts$ whenever $s$ and $t$ are adjacent vertices. Graph $\Gamma$ is the \emph{defining graph} of right-angled Coxeter group $G_\Gamma$ and its flag complex $K=K(\Gamma)$ is the \emph{defining nerve} of the group. Sometimes, we also denote the right-angled Coxeter group $G_\Gamma$ by $G_K$ where $K$ is the flag complex of $\Gamma$. 

Let $S_1$ be a subset of $S$. The subgroup of $G_\Gamma$ generated by $S_1$ is a right-angled Coxeter group $G_{\Gamma_1}$, where $\Gamma_1$ is the induced subgraph of $\Gamma$ with vertex set $S_1$ (i.e. $\Gamma_1$ is the union of all edges of $\Gamma$ with both endpoints in $S_1$). The subgroup $G_{\Gamma_1}$ is called a \emph{special subgroup} of $G_\Gamma$.
\end{defn}

\begin{defn}
Given a finite simplicial graph $\Gamma$, the associated \emph{Davis complex} $\Sigma_\Gamma$ is a cube complex constructed as follows. For every $k$--clique, $T \subset \Gamma$, the special subgroup $G_T$ is isomorphic to the direct product of $k$ copies of $Z_2$. Hence, the Cayley graph of $G_T$ is isomorphic to the 1--skeleton of a $k$--cube. The Davis complex $\Sigma_\Gamma$ has 1--skeleton the Cayley graph of $G_\Gamma$, where edges are given unit length. Additionally, for each $k$--clique, $T \subset \Gamma$, and coset $gG_T$, we glue a unit $k$--cube to $gG_T \subset\Sigma_\Gamma$. The Davis complex $\Sigma_\Gamma$ is a $\CAT(0)$ space and the group $G_\Gamma$ acts properly and cocompactly on the Davis complex $\Sigma_\Gamma$ (see \cite{MR2360474}).
\end{defn}

We now review the concept of relatively hyperbolic groups.

\begin{defn}
Given a finitely generated group $G$ with Cayley graph $\Gamma(G,S)$ equipped with the path metric and a finite collection $\PP$ of subgroups of G, one can construct the \emph{coned off Cayley graph} $\hat{\Gamma}(G,S,\PP)$ as follows: For each left coset $gP$ where $P\in \PP$, add a vertex $v_{gP}$, called a \emph{peripheral vertex}, to the Cayley graph $\Gamma(G,S)$ and for each element $x$ of $gP$, add an edge $e(x,gP)$ of length 1/2 from $x$ to the vertex $v_{gP}$. This results in a metric space that may not be proper (i.e. closed balls need not be compact).
\end{defn}

\begin{defn} [Relatively hyperbolic group]
\label{rel}
A finitely generated group $G$ is \emph{hyperbolic relative to a finite collection $\PP$ of subgroups of $G$} if the coned off Cayley graph is $\delta$--hyperbolic and \emph{fine} (i.e. for each positive number $n$, each edge of the coned off Cayley graph is contained in only finitely many circuits of length $n$).
Each group $P\in \PP$ is a \emph{peripheral subgroup} and its left cosets are \emph{peripheral left cosets} and we denote the collection of all peripheral left cosets by $\Pi$.

%An element $g$ of $G$ is \emph{hyperbolic} if $g$ is not conjugate to any element of any peripheral subgroups. 
\end{defn}

\begin{thm}\cite[Corollary 1.14]{MR2153979}
\label{dru}
If a group $G$ is hyperbolic relative to $\{H_1,\cdots, H_m\}$, and each $H_i$ is hyperbolic relative to a collection of subgroups $\{H_i^1,H_i^2,\cdots,H_i^{n_i}\}$ then $G$ is hyperbolic relative to the collection
\[
\set{H_i^j}{i\in\{1,2,\cdots,m\},j\in\{1,2,\cdots,n_i\}}.
\]
\end{thm}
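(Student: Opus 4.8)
My plan is to prove this transitivity property through the asymptotic-cone characterization of relative hyperbolicity, which is equivalent to the fine coned-off-graph definition recorded above by the standard comparison of the various notions of relative hyperbolicity. The idea is to reduce the group-theoretic claim to a purely metric transitivity property of tree-graded spaces, prove (or invoke) that property, and then translate back. Concretely, I would use that $G$ is hyperbolic relative to $\PP$ exactly when $G$ is \emph{asymptotically tree-graded} with respect to $\PP$: every asymptotic cone $\Cone_\omega(G)$ is tree-graded with respect to the collection of ultralimits $\ulim(g_n P)$ of peripheral cosets.

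First I would fix an arbitrary asymptotic cone $C=\Cone_\omega(G)$. By relative hyperbolicity of $G$ with respect to $\{H_1,\dots,H_m\}$, the space $C$ is tree-graded with respect to the family $\mathcal A$ of limit sets $A=\ulim(g_n H_{i_n})$. Next I would analyze a single piece $A\in\mathcal A$: since there are only finitely many peripheral subgroups, $\omega$-almost surely the index $i_n$ equals a fixed constant $i$, so (using that peripheral subgroups are undistorted, so that the induced metric is the correct one) $A$ is an asymptotic cone of $H_i$ with the basepoints and rescalings inherited from $C$. Because $H_i$ is hyperbolic relative to $\{H_i^1,\dots,H_i^{n_i}\}$, this cone is itself tree-graded, now with respect to the family of ultralimits $\ulim(h_n H_i^{j})$ of cosets of the finer peripheral subgroups.

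The core step is then the \emph{transitivity lemma for tree-graded structures}: if $C$ is tree-graded with respect to $\mathcal A$ and each $A\in\mathcal A$ is in turn tree-graded with respect to a family $\mathcal A'_A$ of proper subsets, then $C$ is tree-graded with respect to $\mathcal P'=\bigcup_{A}\mathcal A'_A$. Granting this, I would identify the members of $\mathcal P'$ with the ultralimits of cosets of the subgroups $H_i^{j}$, so that $C$ is tree-graded with respect to exactly those limits. As the cone was arbitrary, $G$ is asymptotically tree-graded with respect to $\set{H_i^j}{i,j}$, which is the desired conclusion.

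The main obstacle is the transitivity lemma, i.e. checking that the refined family $\mathcal P'$ satisfies the tree-graded axioms globally in $C$ rather than merely inside each $A$. Two refined pieces lying in different outer pieces meet in at most a point because the outer pieces already do, and two refined pieces inside a common $A$ meet in at most a point by the tree-graded structure of $A$; the delicate point is the loop axiom, that every simple nontrivial loop in $C$ lies in a single refined piece. Here I would first use the outer structure to confine such a loop to one $A\in\mathcal A$, then use the inner structure of $A$ to confine it to a single member of $\mathcal A'_A$, taking care that pieces are closed and geodesic. The remaining bookkeeping — that ultralimits of the finer cosets are precisely the refined pieces, and that the refinement stays a finite collection of subgroups up to conjugacy — is routine. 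An entirely combinatorial alternative stays with Bowditch's fine hyperbolic graphs: one blows up each infinite-valence vertex $v$ of a fine hyperbolic $G$-graph with $\Stab(v)\cong H_i$ by equivariantly gluing in a fine hyperbolic $H_i$-graph whose infinite-valence stabilizers are the $H_i^{j}$, and there the obstacle shifts to proving that the blown-up graph remains fine and hyperbolic, which needs a combination argument controlling circuits and geodesic triangles through the gluing regions.
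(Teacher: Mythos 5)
This statement is quoted background in the paper: no proof is given there at all, the citation to Dru\c{t}u--Sapir (Corollary 1.14 of the cited work) stands in for one. So the only meaningful comparison is with the cited source, and your outline is essentially a reconstruction of that original argument: Dru\c{t}u--Sapir establish the equivalence of relative hyperbolicity with being asymptotically tree-graded and deduce this corollary from exactly the transitivity of tree-graded structures that you isolate as the core step. Two remarks on the obstacles you flag. First, the loop axiom is easier than you fear: by the outer axiom (T2) a simple geodesic triangle lies in a single outer piece $A$, and a path inside $A$ that is geodesic for the ambient metric realizes the ambient distance between its endpoints, hence is a fortiori geodesic for the induced metric on $A$; so the confined triangle is a simple geodesic triangle of $A$ and the inner (T2) applies verbatim. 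Alternatively one can use Dru\c{t}u--Sapir's lemma that every simple loop in a tree-graded space lies in one piece, which has the advantage of being robust under the bi-Lipschitz identification of $\ulim(g_n H_i)$ with an asymptotic cone of $H_i$ --- an identification you do need, since the metric induced on a peripheral coset differs from the word metric of $H_i$ by the undistortedness constants. Second, the auxiliary facts you invoke (undistortedness of peripheral subgroups, stabilization of the index $i_n$ $\omega$-almost surely so that each piece of $\Cone_\omega(G)$ is a cone of a single $H_i$, and the equivalence of asymptotically tree-graded with relatively hyperbolic for a finite collection of subgroups) are themselves theorems of the same Dru\c{t}u--Sapir paper, so your argument is not circular but does run through the full strength of that machinery. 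Your combinatorial alternative via blowing up infinite-valence vertices of a fine hyperbolic graph is also viable and is close in spirit to Osin's independent proof of the same transitivity via relative presentations; the asymptotic-cone route makes the verification soft (checking metric axioms) once the characterization is granted, whereas the fine-graph route is self-contained at the level of coarse combinatorics but requires the delicate combination argument for fineness and hyperbolicity that you correctly identify as the sticking point.
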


In the rest of this subsection, we discuss the work of Caprace \cite{MR2665193, MR3450952} and Behrstock-Hagen-Sisto \cite{MR3623669} on peripheral structures of relatively hyperbolic right-angled Coxeter groups.

\begin{thm}[Theorem A' in \cite{MR2665193, MR3450952}]
\label{th1}
Let $\Gamma$ be a simplicial graph and $\JJ$ be a collection of induced subgraphs of $\Gamma$. Then the right-angled Coxeter groups $G_\Gamma$ is hyperbolic relative to the collection $\PP=\set{G_J}{J\in\JJ}$ if and only if the following three conditions hold:
\begin{enumerate}
\item If $\sigma$ is an induced 4-cycle of $\Gamma$, then $\sigma$ is an induced 4-cycle of some $J\in \JJ$.
\item For all $J_1$, $J_2$ in $\JJ$ with $J_1\neq J_2$, the intersection $J_1\cap J_2$ is empty or $J_1\cap J_2$ is a complete subgraph of $\Gamma$.
\item If a vertex $s$ commutes with two non-adjacent vertices of some $J$ in $\JJ$, then $s$ lies in $J$.
\end{enumerate}
\end{thm}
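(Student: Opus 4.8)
The plan is to prove both implications by playing the combinatorics of $\Gamma$ against the $\CAT(0)$ geometry of the Davis complex $\Sigma_\Gamma$, together with standard structural facts about relatively hyperbolic groups. The guiding observation is that for a right-angled Coxeter group the only irreducible infinite special subgroups failing to be (relatively) hyperbolic arise from induced $4$-cycles: an induced $4$-cycle $\sigma$ on vertices $a,b,c,d$ (edges $ab,bc,cd,da$, non-edges $ac,bd$) yields $G_\sigma\cong D_\infty\times D_\infty$, which contains a copy of $\Z^2$, whereas $G_\Gamma$ is word hyperbolic precisely when $\Gamma$ has no induced $4$-cycle. Thus condition (1) should be read as ``every flat is absorbed into the peripheral structure,'' while (2) and (3) encode the almost malnormality and the saturation of that structure.

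First I would establish necessity, assuming $G_\Gamma$ is hyperbolic relative to $\PP=\set{G_J}{J\in\JJ}$. For (1), an induced $4$-cycle $\sigma$ gives $G_\sigma\cong D_\infty\times D_\infty$, a direct product of two infinite groups and hence an unconstricted subgroup (its asymptotic cone, being $\R^2$-like, has no cut points); by the Drutu--Sapir machinery such a subgroup is conjugate into a peripheral subgroup, $G_\sigma\le gG_Jg^{-1}$, and the parabolic-subgroup calculus of Coxeter groups (parabolics meet special subgroups in special subgroups) then forces $\sigma\subseteq J$, so $\sigma$ is an induced $4$-cycle of $J$. For (2), the peripheral family of a relatively hyperbolic group is almost malnormal, so distinct peripheral subgroups intersect in finite subgroups; since $G_{J_1}\cap G_{J_2}=G_{J_1\cap J_2}$, finiteness forces $J_1\cap J_2$ to be complete. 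For (3), if $s$ commutes with non-adjacent $u,v\in J$ then the element $uv$ is an infinite-order parabolic element lying in $G_J$ and $s$ lies in its centralizer; as the centralizer of a parabolic element is contained in the unique maximal peripheral subgroup containing it, namely $G_J$, we conclude $s\in G_J$, i.e. $s\in J$.

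For sufficiency I would assume (1)--(3) and show that $\Sigma_\Gamma$ is asymptotically tree-graded with respect to the collection $\Pi$ of peripheral cosets, which by Drutu--Sapir is equivalent to relative hyperbolicity. The product and flat regions of $\Sigma_\Gamma$ correspond to join subcomplexes; condition (1), that every induced $4$-cycle lies in some $J$, guarantees that each maximal flat, and more generally each maximal product region, is confined to a single peripheral subcomplex $g\Sigma_J$. This is exactly where one shows that the cut-point-free parts of the asymptotic cone live inside the pieces. Conditions (2) and (3) are then used to show that the pieces overlap in at most one point: the coarse intersection of two distinct peripheral cosets $g_1G_{J_1}$ and $g_2G_{J_2}$ is controlled by $G_{J_1\cap J_2}$, which is finite by (2), while (3) prevents a reflection from gluing two pieces along a common flat; hence their ultralimits meet in at most a point, and the tree-graded structure assembles.

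The main obstacle I expect is the sufficiency direction, and within it the verification that once all $4$-cycles are absorbed the coned-off Cayley graph $\hat\Gamma(G_\Gamma,S,\PP)$ is genuinely $\delta$-hyperbolic and fine, i.e. that there are no hidden sources of non-hyperbolicity beyond the flats already accounted for and that the peripheral regions are sufficiently isolated. Controlling this requires a careful analysis of geodesics and disk diagrams in $\Sigma_\Gamma$ relative to the subcomplexes $g\Sigma_J$, using normal forms and parabolic closures for Coxeter groups to rule out long thin product regions that straddle two distinct pieces. Condition (3) is precisely the combinatorial input that forbids such straddling, and making that quantitative, so as to obtain uniform thinness of triangles and finiteness of circuits in the coned-off graph, is the crux of the argument.
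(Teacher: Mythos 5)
This statement is quoted verbatim from Caprace (Theorem A$'$ of the cited work); the paper under review gives no proof of it, so your proposal must be judged as a free-standing reconstruction of Caprace's argument. Your necessity direction is essentially correct and follows the standard route: absorbing $G_\sigma\cong D_\infty\times D_\infty$ into a peripheral subgroup via the Dru\c{t}u--Sapir theorem on unconstricted subgroups gives (1); almost malnormality of the peripheral family together with $G_{J_1}\cap G_{J_2}=G_{J_1\cap J_2}$ and the fact that a right-angled Coxeter group is finite exactly when its graph is complete gives (2); and your centralizer-of-$uv$ argument, run through almost malnormality, gives (3). One imprecision should be repaired: the parenthetical fact ``parabolics meet special subgroups in special subgroups'' does not by itself yield $\sigma\subseteq J$ from $G_\sigma\le gG_Jg^{-1}$. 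The correct elementary step in the right-angled setting is that every reflection of $gG_Jg^{-1}$ is conjugate to a vertex of $J$, and distinct generators of a right-angled Coxeter group are never conjugate (all edge labels are even), so each vertex $s$ of $\sigma$ lies in $J$; in Caprace's actual setting of general Coxeter groups this step needs genuine extra care, since odd bonds make distinct generators conjugate.

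The genuine gap is the sufficiency direction, which is the substantive half of the theorem and which you yourself flag as ``the crux'' without carrying it out. Asserting that (1) confines maximal flats and product regions to single peripheral subcomplexes, and that (2)--(3) bound coarse intersections of peripheral cosets, is precisely what must be proved, and neither claim is routine. In particular, (2) alone --- finiteness of $G_{J_1\cap J_2}$ --- does not bound the coarse intersection of two cosets $g_1\Sigma_{J_1}$ and $g_2\Sigma_{J_2}$: parallel walls can produce unbounded coarse overlap even when the group-theoretic intersection is finite, and ruling this out is exactly where (3) must enter through a concrete wall/residue analysis in the Davis complex rather than a soft asymptotic-cone remark. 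Caprace's proof does follow your general strategy --- he verifies that the $J$-residues form an isolated family of convex subspaces of the $\CAT(0)$ Davis complex (every flat lies Hausdorff-close to some residue, and distinct residues have uniformly bounded coarse intersection) and then invokes a relative-hyperbolicity criterion in the spirit of Dru\c{t}u--Sapir and Hruska--Kleiner --- so your plan aims at the right target. But as written it contains no proof of either isolation property, nor an argument that \emph{every} flat, not merely the visibly periodic flats spanned by an induced $4$-cycle, lies close to a peripheral residue. Until those verifications are supplied, the ``if'' direction remains unproven.
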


\begin{thm}[Theorem B in \cite{MR2665193, MR3450952}]
\label{C1}
Let $\Gamma$ be a simplicial graph. If $G_\Gamma$ is relatively hyperbolic with respect to finitely generated subgroups $H_1, \cdots, H_m$, then each $H_i$ is conjugate to a special subgroup of $G_\Gamma$.
\end{thm}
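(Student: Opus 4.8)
The plan is to recast the conclusion entirely in Coxeter-theoretic terms and then to extract the geometry of the peripheral subgroups from two standard features of any relatively hyperbolic structure. Call a conjugate $g G_{\Gamma_1} g^{-1}$ of a special subgroup a \emph{parabolic subgroup}; then ``$H_i$ is conjugate to a special subgroup'' means exactly ``$H_i$ is parabolic.'' From relative hyperbolicity I would use only two inputs, both due to Osin and Dru\c{t}u--Sapir: first, each peripheral subgroup $H_i$ is undistorted in $G_\Gamma$; and second, the collection $\{H_1,\dots,H_m\}$ is \emph{almost malnormal}, i.e. $H_i\cap gH_jg^{-1}$ is finite whenever it is not the case that $i=j$ and $g\in H_i$. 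I would also record that peripheral subgroups are infinite. Everything else should come from the $\CAT(0)$ cube geometry of the Davis complex $\Sigma_\Gamma$, in which every special subgroup $G_{\Gamma_1}$ acts properly and cocompactly on the convex, undistorted subcomplex $\Sigma_{\Gamma_1}\subseteq\Sigma_\Gamma$.

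To each $H_i$ I would attach its \emph{parabolic closure} $\mathrm{Pc}(H_i)$, the smallest parabolic subgroup of $G_\Gamma$ containing $H_i$; this is well defined and is itself parabolic, since intersections of parabolic subgroups of a Coxeter group are again parabolic. After conjugating I may assume $\mathrm{Pc}(H_i)=G_{\Lambda_i}$ for an induced subgraph $\Lambda_i$, and minimality of the closure means $H_i$ lies in no proper parabolic subgroup of $G_{\Lambda_i}$. Since a subgroup is parabolic precisely when it equals its own parabolic closure, the entire theorem reduces to the single assertion $H_i=G_{\Lambda_i}$. One half of this is immediate from almost malnormality: if I can show $[G_{\Lambda_i}:H_i]<\infty$, then for any $g\in G_{\Lambda_i}$ the intersection $H_i\cap gH_ig^{-1}$ is of finite index in $G_{\Lambda_i}$ and in particular infinite, so almost malnormality with $j=i$ forces $g\in H_i$; hence $G_{\Lambda_i}\subseteq H_i$ and equality follows.

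The heart of the argument is therefore to rule out the possibility that $H_i$ has infinite index in $G_{\Lambda_i}$. Here I would use undistortion: because $H_i$ is undistorted in $G_\Gamma$ and $\Sigma_{\Lambda_i}$ is convex and undistorted, $H_i$ is undistorted in $G_{\Lambda_i}$, so its orbit is coarsely convex and $H_i$ acts cocompactly on a convex subcomplex $Q\subseteq\Sigma_{\Lambda_i}$. If $H_i$ had infinite index, $Q$ would be coarsely proper in $\Sigma_{\Lambda_i}$, and I would argue that the resulting ``direction of thinness'' of $Q$ singles out a proper residue of $\Sigma_{\Lambda_i}$ that coarsely absorbs $H_i$ — equivalently, a proper parabolic subgroup of $G_{\Lambda_i}$ containing $H_i$ — contradicting the minimality $G_{\Lambda_i}=\mathrm{Pc}(H_i)$. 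Thus $Q$ is coarsely all of $\Sigma_{\Lambda_i}$, cocompactness gives $[G_{\Lambda_i}:H_i]<\infty$, and the previous paragraph completes the proof. As auxiliary leverage I would keep Theorem~\ref{th1} at hand to produce a reference peripheral structure of $G_\Gamma$ made of special subgroups, matching the $H_i$ against it via Theorem~\ref{dru} and the rigidity of reduced peripheral structures when the given collection happens to be reduced.

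The step I expect to be the main obstacle is precisely the passage from ``$Q$ coarsely proper'' to ``$H_i$ sits inside a proper parabolic.'' Making ``thinness of a convex subcomplex'' correspond to a genuine proper residue requires controlling how the $H_i$-orbit meets the walls (reflection hyperplanes) of $\Sigma_{\Lambda_i}$: one must show that a wall can separate $H_i$ from filling $\Sigma_{\Lambda_i}$ only if $H_i$ is trapped on one side, which is where the Coxeter-specific structure theory of parabolic closures and isolated subspaces is indispensable and where a naive $\CAT(0)$ argument is insufficient (since $\Sigma_{\Lambda_i}$ need not be hyperbolic). A secondary difficulty is that the given peripheral collection need not be reduced — it may contain hyperbolic or redundant pieces and may refine the canonical special structure — so the clean rigidity statement does not apply verbatim, and the wall-combinatorial argument above, rather than uniqueness of peripheral structures, is what must carry the non-reduced peripherals.
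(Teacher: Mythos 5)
First, note that the paper itself contains no proof of this statement: it is quoted verbatim as Caprace's Theorem~B from \cite{MR2665193, MR3450952}, so your proposal can only be measured against what a correct proof must accomplish. Your endgame is sound: parabolic closures in a finite rank Coxeter group are well defined (intersections of parabolics are parabolic), and your observation that $[\mathrm{Pc}(H_i):H_i]<\infty$ together with almost malnormality forces $H_i=\mathrm{Pc}(H_i)$ is correct (granting $H_i$ infinite). The fatal gap is exactly the step you flagged, but it is worse than a technical wall-combinatorics problem: the implication ``$H_i$ of infinite index in $\mathrm{Pc}(H_i)$ $\Rightarrow$ a direction of thinness traps $H_i$ in a proper residue'' is \emph{false} given only the inputs you allow yourself (undistortion, almost malnormality, infiniteness, convexity in the Davis complex). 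Take $\Gamma$ the $5$--cycle, so $G_\Gamma$ is the cocompact hyperbolic reflection group of a right-angled pentagon, let $g=abcde$ be the Coxeter element and $H=E(g)$ its maximal virtually cyclic subgroup. Then $H$ is quasiconvex (hence undistorted and Morse), infinite, and almost malnormal, so $G_\Gamma$ is hyperbolic relative to $\{H\}$ by Bowditch's criterion (and this is a genuine structure under Definition~\ref{rel}). Yet the parabolic closure of a Coxeter element of an infinite irreducible system is the whole group, so $\mathrm{Pc}(H)=G_\Gamma$ while $[G_\Gamma:H]=\infty$: the convex core of $H$ is a coarsely proper neighborhood of an axis, it crosses walls of every type, and no proper residue absorbs it. Since $H$ is two-ended and $G_\Gamma$ is one-ended, $H$ is not conjugate to any special subgroup (no cyclic shift of $abcde$ has proper support, so $g$ lies in no proper parabolic). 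Your framework therefore cannot produce the desired contradiction; indeed the example shows the statement as transcribed, with arbitrary finitely generated peripherals, requires the additional hypotheses present in Caprace's actual theorem, which exclude such hyperbolic/two-ended peripheral subgroups.

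The structural lesson is that purely coarse inputs cannot distinguish $E(g)$ from a parabolic subgroup, so any correct proof must inject genuinely Coxeter-theoretic or comparison-theoretic information beyond your list: for instance, matching the given peripheral structure against the canonical one by \emph{thick} special subgroups (Theorems~\ref{th1} and~\ref{n1}) using the Dru\c{t}u--Sapir/Behrstock--Dru\c{t}u--Mosher principle that unconstricted (thick) subgroups are conjugate into the peripherals of any relatively hyperbolic structure, combined with the reflection and parabolic-closure combinatorics that Caprace develops (separated parallel walls, commensurators of parabolics). A secondary, independent flaw: your assertion that an undistorted subgroup of $G_{\Lambda_i}$ acts cocompactly on a convex subcomplex $Q\subseteq\Sigma_{\Lambda_i}$ is unjustified --- undistortion does not imply combinatorial convex cocompactness in $\CAT(0)$ cube complexes --- but this is moot next to the counterexample above, which satisfies even the strengthened hypotheses and still defeats the main step.
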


\begin{thm}[Theorem I in \cite{MR3623669}]
\label{n1}
Let $\mathcal{T}$ be the class consisting of the finite simplicial graphs $\Lambda$ such that $G_\Lambda$ is strongly algebraically thick. Then for any finite simplicial graph $\Gamma$ either: $\Gamma \in \mathcal{T}$, or there exists a collection $\JJ$ of induced subgraphs of $\Gamma$ such that $\JJ \subset \mathcal{T}$ and $G_\Gamma$ is hyperbolic relative to the collection $\PP=\set{G_J}{J \in \JJ}$ and this peripheral structure is minimal.
\end{thm}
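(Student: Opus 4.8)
To prove this dichotomy (Theorem~\ref{n1}, due to Behrstock--Hagen--Sisto), I would organize the argument around a single principle: for right-angled Coxeter groups, \emph{strong algebraic thickness is exactly the obstruction to relative hyperbolicity}. The starting point is the Behrstock--Drutu--Mosher fact that a strongly algebraically thick group is never hyperbolic relative to a collection of proper subgroups. This immediately makes the two alternatives mutually exclusive and settles one of them: if $\Gamma$ itself lies in $\mathcal{T}$, the first alternative holds and there is nothing further to prove. All the content is therefore in the case $\Gamma \notin \mathcal{T}$, where I must manufacture the peripheral collection $\JJ$, show it works, show its members are thick, and show the resulting structure is minimal.

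For the construction I would let $\JJ$ be the collection of induced subgraphs $J \subseteq \Gamma$ that are \emph{maximal} subject to $G_J$ being strongly algebraically thick. Such pieces are seeded by the induced $4$-cycles, since an induced $4$-cycle is a join $\overline{K_2} * \overline{K_2}$ and hence $G_J \cong D_\infty \times D_\infty$ is thick of order $0$; they are then enlarged by the standard thick-amalgamation, in which two thick subgraphs overlapping in a subgraph whose special subgroup is infinite are merged into a single thick piece. After checking that there are only finitely many such maximal pieces, the crux is to verify the three combinatorial conditions of Caprace's Theorem~\ref{th1}: (i) every induced $4$-cycle lies in some $J$, which holds because each induced $4$-cycle already generates a thick-of-order-$0$ subgroup and so is absorbed into a maximal thick piece; (ii) distinct members of $\JJ$ meet in a complete subgraph, which is the assertion that two maximal thick pieces cannot share a pair of non-adjacent vertices, for otherwise the thick-amalgamation would fuse them and contradict maximality; and (iii) any vertex commuting with two non-adjacent vertices of a piece $J$ already lies in $J$, which is the closure of maximal thick pieces under adjoining such a vertex. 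Once (i)--(iii) hold, Theorem~\ref{th1} yields that $G_\Gamma$ is hyperbolic relative to $\PP = \set{G_J}{J \in \JJ}$, and each $G_J$ is thick by construction, i.e. $\JJ \subset \mathcal{T}$. If instead the thick-amalgamation engulfs all of $\Gamma$, then the unique maximal thick piece is $\Gamma$ itself, recovering the first alternative $\Gamma \in \mathcal{T}$; thus the two cases are exhaustive.

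Minimality I would extract from the same obstruction. Given any relatively hyperbolic structure on $G_\Gamma$ with finitely generated peripherals, Caprace's Theorem~\ref{C1} lets me take each peripheral to be a special subgroup $G_{J'}$; since each $G_J$ is thick it is unconstricted and cannot be split off, so it must be contained in a single peripheral, and Caprace's conditions then force $J \subseteq J'$. Thus no relatively hyperbolic structure can have peripherals smaller than the $G_J$, while Theorem~\ref{dru} shows that the only way to alter $\PP$ and retain relative hyperbolicity is to refine a peripheral into relatively hyperbolic pieces, which the thickness of each $G_J$ forbids. Hence $\PP$ is the minimal peripheral structure.

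The hardest part is the combinatorial heart underlying (ii) and (iii), namely a precise control of how strong algebraic thickness propagates under amalgamation. Concretely one needs the closure lemmas asserting that gluing thick pieces along an infinite special subgroup (equivalently, along a pair of commuting but non-adjacent vertices) again produces a thick piece, together with the converse that a subgraph failing these closures cannot be thick; this is exactly the place where one must unwind the inductive definition of \emph{thick of order $n$} and match each of Caprace's conditions to a thickening operation. Establishing this equivalence between ``lies in the $4$-cycle saturation'' and ``generates a thick special subgroup,'' and simultaneously ruling out accidental thick subgraphs not produced by the join-and-amalgamate recipe, is the technically demanding core of the argument and is where the bulk of the Behrstock--Hagen--Sisto proof resides.
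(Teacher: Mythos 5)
This statement is not proved in the paper at all: it is Theorem I of Behrstock--Hagen--Sisto, imported verbatim from \cite{MR3623669} as background, so there is no internal proof to compare your attempt against. Judged against the actual source, your outline is faithful to the real argument: taking $\JJ$ to be the maximal induced subgraphs in $\mathcal{T}$, seeding with induced $4$-cycles (each giving $D_\infty\times D_\infty$, thick of order $0$), verifying Caprace's three conditions of Theorem~\ref{th1} via the closure properties of $\mathcal{T}$, and obtaining minimality from the fact that thick (hence unconstricted) peripherals must be absorbed into the peripherals of any competing relatively hyperbolic structure, with Theorem~\ref{C1} reducing those to special subgroups. Your verifications of conditions (ii) and (iii) are exactly right and are underwritten by the paper's quoted Theorem~\ref{n2}: a non-complete intersection of two maximal thick pieces triggers clause (3) of Theorem~\ref{n2} (including its provision for extra edges between the two sides), and a vertex adjacent to two non-adjacent vertices of $J$ triggers the coning clause (2), each contradicting maximality.

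Two caveats. First, a terminological slip: you describe amalgamation ``along a pair of commuting but non-adjacent vertices''; in a right-angled Coxeter group non-adjacent vertices do \emph{not} commute --- they generate an infinite dihedral group, which is what makes the edge group infinite. What you need is that a non-complete induced subgraph generates an infinite special subgroup. Second, you correctly identify that the ``technically demanding core'' is the characterization of thick subgraphs, but be aware of the logical architecture: in \cite{MR3623669} that characterization (Theorem~\ref{n2} here) is established together with Theorem~\ref{n1}, not prior to it, so as a self-contained proof of the source theorem your proposal defers the bulk of the work. Within the context of this paper, however, where Theorems~\ref{th1}, \ref{C1}, \ref{dru} and \ref{n2} are all available as quoted results, your derivation is essentially complete modulo the routine absorption argument $J\subseteq J'$ in the minimality step, which deserves the Behrstock--Drutu--Mosher reference rather than an appeal to Theorem~\ref{dru} alone.
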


\begin{rem}
In Theorem \ref{n1} we use the notion of \emph{strong algebraic thickness} which is introduced in \cite{MR3421592} and is a sufficient condition for a group to be non-hyperbolic relative to any collection of proper subgroups. We refer the reader to \cite{MR3421592} for more details. The following theorem from \cite{MR3623669} characterizes all strongly algebraically thick right-angled Coxeter groups and it will prove useful for studying peripheral subgroups of relatively hyperbolic right-angled Coxeter groups. % (see Sections \ref{visual splittings and Bowditch boundary}, \ref{bndry Scarpet or 2sphere} and \ref{menger}).
%However, we will talk about a theorem in \cite{MR3623669} that characterizes all strongly algebraically thick right-angled Coxeter groups since it will be useful for us to study peripheral subgroups of a relatively hyperbolic right-angled Coxeter group.
\end{rem}

\begin{thm}[Theorem II in \cite{MR3623669}]
\label{n2}
Let $\mathcal{T}$ be the class of finite simplicial graphs whose corresponding right-angled Coxeter groups are strongly algebraically thick. Then $\mathcal{T}$ is the smallest class of graphs satisfying the following conditions:
\begin{enumerate}
\item The 4-cycle lies in $\mathcal{T}$.
\item Let $\Gamma \in \mathcal{T}$ and let $\Lambda \subset \Gamma$ be an induced subgraph which is not a complete graph. Then the graph obtained from $\Gamma$ by coning off $\Lambda$ is in $\mathcal{T}$.
\item Let $\Gamma_1,\Gamma_2 \in \mathcal{T}$ and suppose there exists a graph $\Gamma$, which is not a complete graph, and which arises as a subgraph of each of the $\Gamma_i$. Then the union $\Lambda$ of $\Gamma_1$, $\Gamma_2$ along $\Gamma$ is in $\mathcal{T}$, and so is any graph obtained from $\Lambda$ by adding any collection of edges joining vertices in $\Gamma_1-\Gamma$ to vertices of $\Gamma_2-\Gamma$.
\end{enumerate}
\end{thm}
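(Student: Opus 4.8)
The plan is to prove the theorem as two inclusions between $\mathcal{T}$ and the class $\mathcal{T}_0$ defined to be the smallest class of finite simplicial graphs that contains the $4$--cycle and is closed under operations (2) and (3); establishing $\mathcal{T}=\mathcal{T}_0$ is exactly the assertion that $\mathcal{T}$ is smallest. Throughout I use the inductive definition of strong algebraic thickness from \cite{MR3421592}: $G_\Gamma$ is thick of order $0$ precisely when $\Gamma$ is a join $\Gamma_1*\Gamma_2$ with both $\Gamma_i$ non-complete, so that $G_\Gamma\cong G_{\Gamma_1}\times G_{\Gamma_2}$ is a direct product of two infinite special subgroups; and $G_\Gamma$ is thick of order at most $n+1$ when it is generated by a finite family of undistorted subgroups, each thick of order at most $n$, that is thickly chained, meaning consecutive members have coarse intersection of infinite diameter. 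The inclusion $\mathcal{T}_0\subseteq\mathcal{T}$ is the \emph{soundness} statement (the operations preserve thickness), while $\mathcal{T}\subseteq\mathcal{T}_0$ is the \emph{minimality} statement, which is the hard direction.

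For soundness I would induct on the construction of a graph in $\mathcal{T}_0$, invoking the permanence properties of thickness from \cite{MR3421592}. The base case is immediate: $G_{C_4}\cong D_\infty\times D_\infty$ is a product of two infinite groups, hence thick of order $0$. The coning operation (2) yields a visual amalgamated decomposition $G_{\Gamma'}=G_\Gamma*_{G_\Lambda}(\langle v\rangle\times G_\Lambda)$ over the infinite, undistorted subgroup $G_\Lambda$ (infinite because $\Lambda$ is non-complete); the cone vertex $v$ produces a product region commuting with all of $G_\Lambda$ and coarsely meeting the thick subgroup $G_\Gamma$ along $G_\Lambda$, and a direct application of the permanence properties, treating the product region as a thickening direction amalgamated to $G_\Gamma$ over the infinite group $G_\Lambda$, shows $G_{\Gamma'}$ is again thick, of order at most one greater. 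For the amalgamation operation (3) the two pieces $G_{\Gamma_1},G_{\Gamma_2}$ are thick and meet in the infinite subgroup $G_{\Gamma}$ (here $\Gamma$ non-complete), so they form a thickly chained family of two thick pieces; the added edges between $\Gamma_1-\Gamma$ and $\Gamma_2-\Gamma$ introduce only new commutation relations and do not alter the induced subgraphs on $V(\Gamma_1)$ and $V(\Gamma_2)$, so $G_{\Gamma_1}$ and $G_{\Gamma_2}$ persist as thick special subgroups and the chaining survives. These are the routine verifications.

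For minimality I would induct on the order $n$ of thickness of $G_\Gamma$. In the base case $n=0$, where $\Gamma=\Gamma_1*\Gamma_2$ with both factors non-complete, I would show such joins lie in $\mathcal{T}_0$ in two stages: first build the complete bipartite graph on the vertex sets of $\Gamma_1,\Gamma_2$ starting from $C_4$ by repeatedly coning off one (edgeless, non-complete) side to enlarge the other, and then reinstate each intra-factor edge $a\sim a'$ by one application of operation (3), splitting the current graph along the induced subgraph obtained by deleting $a$ and $a'$, which remains non-complete since it still contains the opposite factor, with $a$ and $a'$ placed on opposite sides. For the inductive step I would write $G_\Gamma$ as a thickly chained family of undistorted subgroups each thick of order at most $n$, replace these witnessing subgroups by special subgroups $G_{\Gamma_1},\dots,G_{\Gamma_k}$ on induced subgraphs that still cover $\Gamma$ and remain thickly chained, each $\Gamma_i\in\mathcal{T}_0$ by induction, and then reconstruct $\Gamma$ by iterated application of operation (3): order the pieces so each new $\Gamma_i$ meets the running union $U_{i-1}$ in a non-complete induced subgraph (supplied by the chaining, since $\Gamma_i\cap U_{i-1}$ contains a non-complete $\Gamma_i\cap\Gamma_j$ for an earlier $\Gamma_j$), and absorb every edge of $\Gamma$ between $\Gamma_i-U_{i-1}$ and $U_{i-1}-\Gamma_i$ through the edge-adding clause of (3). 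Because the pieces cover $V(\Gamma)$ and every edge either lies inside a single piece or straddles two of them, this recovers $\Gamma$ exactly and places it in $\mathcal{T}_0$.

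The main obstacle is the promotion step in the inductive part of minimality: the subgroups witnessing thickness of order at most $n$ are a priori only undistorted, and I must replace them by \emph{special} subgroups carried by induced subgraphs without destroying the covering and chaining conditions, so that in particular the infinite coarse intersections become non-complete induced intersections. I would attack this using the structural theory for right-angled Coxeter groups: Caprace's description of relevant substructures as conjugates of special subgroups (Theorem~\ref{C1}), the relative hyperbolicity criterion of Theorem~\ref{th1}, and the thick versus relatively hyperbolic dichotomy of Theorem~\ref{n1}, which together let me detect inside a thick $\Gamma$ the induced join and amalgam structure along non-complete separators. A secondary difficulty is the bookkeeping in the base case, ensuring that each intermediate graph fed into operation (3) genuinely lies in $\mathcal{T}_0$ and that the deleted subgraphs remain non-complete; this is handled by a short separate induction on the number of vertices and intra-factor edges, with the degenerate cases (where deleting a vertex makes a factor complete) treated by hand.
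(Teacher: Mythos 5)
This statement is not proved in the paper at all: Theorem~\ref{n2} is quoted verbatim as Theorem~II of Behrstock--Hagen--Sisto \cite{MR3623669}, so the only proof to compare against is the one in that source. There, minimality is obtained by the contrapositive, in tandem with Theorem~\ref{n1}: one shows that if $\Gamma$ lies outside the inductively constructed class $\mathcal{T}_0$, then $G_\Gamma$ is hyperbolic relative to the special subgroups carried by the maximal $\mathcal{T}_0$--subgraphs of $\Gamma$ (verifying the three conditions of Caprace's criterion, Theorem~\ref{th1}), and relative hyperbolicity with proper peripherals is incompatible with strong algebraic thickness; hence $\mathcal{T}\subseteq\mathcal{T}_0$ without ever analyzing a witnessing family for thickness. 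Your soundness direction is essentially the same as theirs and is fine in spirit, though one detail is off: in the coning step the subgroup $\langle v\rangle\times G_\Lambda$ has $\langle v\rangle\cong\Z/2\Z$ finite, so it is virtually $G_\Lambda$ and is not itself a ``product region'' or thick piece; the correct mechanism is to chain the cosets $gG_\Gamma$ of the single thick special subgroup $G_\Gamma$ along cosets of the infinite group $G_\Lambda$.

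Your minimality direction, however, has a genuine gap, and it sits exactly where you flag ``the main obstacle.'' First, the base case is not definitional: strong algebraic thickness of order~$0$ is a metric condition (wideness/unconstrictedness), not ``$\Gamma$ is a join of two non-complete graphs''; the equivalence of the two for right-angled Coxeter groups is itself a nontrivial theorem and cannot be assumed as the starting point of the induction. Second, and more seriously, the promotion step --- replacing an \emph{arbitrary} family of undistorted subgroups witnessing thickness of order $\le n$ by \emph{special} subgroups on induced subgraphs, while preserving the covering, the thick chaining, and the order bound $\le n$ --- is precisely the hard content of the theorem, and the tools you cite do not deliver it. Caprace's Theorem~\ref{C1} applies only to peripheral subgroups of a relatively hyperbolic structure, and a thick group admits no such structure with proper peripherals, so it says nothing about thickness witnesses; and invoking Theorem~\ref{n1} is essentially circular, since that statement is Theorem~I of \cite{MR3623669}, proved simultaneously with Theorem~II by the contrapositive construction described above. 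As it stands, your induction on thickness order reduces the theorem to an unproved (and, for arbitrary witnessing families, implausible) rigidity claim; the route that actually closes this gap is the relatively hyperbolic dichotomy argument of the source, not a direct structure theory for thickness witnesses.
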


\subsection{Divergence of right-angled Coxeter groups and $3$--manifold groups} Roughly speaking, divergence is a quasi-isometry invariant that measures the circumference of a ball of radius $n$ as a function of $n$. We refer the reader to \cite{MR1254309} for a precise definition. In this section, we state some theorems about divergence of certain right-angled Coxeter groups and $3$-manifold groups which will be used later in this paper.

\subsubsection{Divergence of right-angled Coxeter groups}
\begin{thm}[\cite{MR3623669}]
\label{bh}
The divergence of a right-angled Coxeter group is either exponential (if the group is relatively hyperbolic) or bounded above by a polynomial (if the group is strongly algebraically thick).
\end{thm}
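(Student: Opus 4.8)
The plan is to treat the two conditional assertions separately and, in each case, to establish matching asymptotic bounds. A single upper bound is common to both statements, so I would first record it: the Davis complex $\Sigma_\Gamma$ is a $\CAT(0)$ cube complex on which $G_\Gamma$ acts properly and cocompactly. It is a standard fact that any group acting geometrically on a $\CAT(0)$ space has divergence bounded above by an exponential function, since in nonpositive curvature geodesics spread apart at most exponentially fast. This observation immediately supplies the exponential upper bound needed in the relatively hyperbolic case.

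For the lower bound in the relatively hyperbolic case, I would appeal to the theory of divergence in relatively hyperbolic groups. The key mechanism is that if $G_\Gamma$ is hyperbolic relative to a collection of proper peripheral subgroups, then a path avoiding a ball of radius $r$ about a point on a geodesic is forced to detour around the peripheral regions, and the $\delta$--hyperbolicity and fineness of the coned-off Cayley graph make such a detour exponentially long in $r$. This is precisely the content of the lower-bound estimates for relatively hyperbolic groups, which I would cite from \cite{Sisto}. Combining this with the $\CAT(0)$ upper bound pins the divergence down to exactly exponential, establishing assertion~(A).

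For the strongly algebraically thick case, only an upper bound (polynomial) is asserted, so I would invoke the general relationship between algebraic thickness and polynomially bounded divergence. By definition (see \cite{MR3421592}), strong algebraic thickness implies that $G_\Gamma$ is algebraically thick of some finite order $n$, and thickness of order $n$ forces the divergence to be bounded above by a polynomial (of degree controlled by $n$). Since the constructions producing thickness—coning off non-complete subgraphs and amalgamating along non-complete subgraphs, as in Theorem~\ref{n2}—preserve these polynomial bounds, the polynomial upper bound propagates through the inductive description of the thick class $\mathcal{T}$. This yields assertion~(B).

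The main obstacle I anticipate is the exponential lower bound in the relatively hyperbolic case. The two upper bounds are essentially structural: the $\CAT(0)$ exponential bound and the thickness polynomial bound both follow from general principles once the relevant geometric object (the Davis complex, respectively the order of thickness) has been identified. By contrast, the exponential lower bound demands a genuine geometric argument showing that evading a ball is expensive; this requires careful control of how geodesics enter and exit the peripheral cosets and exploits the hyperbolicity of the coned-off graph in an essential way. It is here that one must either appeal to or reconstruct the divergence estimates for relatively hyperbolic groups, and this is where the substantive work lies.
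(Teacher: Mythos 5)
The paper does not actually prove this statement: Theorem~\ref{bh} is quoted verbatim from \cite{MR3623669}, so the only meaningful comparison is with the argument in that source, and your outline reproduces its structure faithfully — exponential upper bound from the geometric action on the $\CAT(0)$ Davis complex, exponential lower bound from the divergence estimate for relatively hyperbolic groups in \cite{Sisto}, and the polynomial bound in the thick case via Behrstock--Dru\c{t}u \cite{MR3421592} together with inductive control of the order of thickness along the description of the class $\mathcal{T}$ in Theorem~\ref{n2}. You also correctly identify that the statement, being two conditionals, does not require the thick/relatively-hyperbolic dichotomy itself, and that the substantive geometric content sits in the exponential lower bound.

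Two steps are misstated, and one of them is a genuine logical misstep as written. First, in the thick case you pass from \emph{strong} algebraic thickness to plain algebraic thickness of order $n$ and then assert that thickness of order $n$ forces polynomially bounded divergence. That implication is not available: the polynomial upper bound (divergence $\preceq x^{n+1}$) in \cite{MR3421592} is proved precisely for strong thickness of order $n$, and for the non-strong version no polynomial bound is known — discarding ``strong'' throws away exactly the uniformity hypotheses that make the bound work. Since your hypothesis \emph{is} strong thickness, the repair is immediate: apply the Behrstock--Dru\c{t}u corollary directly, using that each operation generating $\mathcal{T}$ raises the order of strong thickness by at most one, so a finite order is available to feed into it. Second, your justification of the $\CAT(0)$ upper bound — ``geodesics spread apart at most exponentially fast'' — is off: by convexity of the metric, geodesics in a $\CAT(0)$ space spread apart at most \emph{linearly}; the exponential bound on divergence is nevertheless correct and standard, but it comes from a path-pushing argument (connecting points on a sphere around a ball at exponential cost), not from the rate at which geodesics separate. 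Finally, note that the exponential lower bound from \cite{Sisto} requires the peripheral subgroups to be proper, which should be made explicit in the relatively hyperbolic hypothesis.
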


\begin{defn}
\label{CFS}
Given a graph $\Gamma$, define the associated \emph{four-cycle} graph $\Gamma^4$ as follows. The vertices of $\Gamma^4$ are the induced loops of length four (i.e. four-cycles) in $\Gamma$. Two vertices of $\Gamma^4$ are connected by an edge if the corresponding four-cycles in $\Gamma$ share a pair of non-adjacent vertices. Given a subgraph $K$ of $\Gamma^4$, we define the \emph{support} of $K$ to be the collection of vertices of $\Gamma$ (i.e. generators of $G_\Gamma$) that appear in the four-cycles in $\Gamma$ corresponding to the vertices of $K$. A graph $\Gamma$ is \emph{$\mathcal{CFS}$} if $\Gamma=\Omega*K$, where $K$ is a (possibly empty) clique and $\Omega$ is a non-empty subgraph such that $\Omega^4$ has a connected component whose support is the entire vertex set of $\Omega$.
\end{defn}

\begin{thm}[Theorem 1.1 in \cite{MR3314816}]
\label{dt}
Let $\Gamma$ be a finite, simplicial, connected, triangle-free graph which has no separating vertices or edges. Let $G_\Gamma$ be the associated right-angled Coxeter group.
\begin{enumerate}
\item The group $G_\Gamma$ has linear divergence if and only if $\Gamma$ is a join.
\item The group $G_\Gamma$ has quadratic divergence if and only if $\Gamma$ is $\mathcal{CFS}$ and is not a join.
\end{enumerate}
\end{thm}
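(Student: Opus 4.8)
The plan is to prove the two biconditionals by separating the upper- and lower-bound estimates on divergence and by routing the two extreme cases (join and non-$\mathcal{CFS}$) through structural results that are already available. Throughout I use that the hypotheses (connected, triangle-free, no separating vertex or edge) make $G_\Gamma$ one-ended, so that divergence is the relevant invariant, and I work in the Davis complex $\Sigma_\Gamma$, on which $G_\Gamma$ acts properly and cocompactly by $\CAT(0)$ isometries.

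First I would dispose of the join case. If $\Gamma = \Gamma_1 * \Gamma_2$ is a join, then triangle-freeness forces each $\Gamma_i$ to be edgeless, and the no-separating-vertex hypothesis forces each to have at least two vertices; hence $G_\Gamma \cong G_{\Gamma_1} \times G_{\Gamma_2}$ is a direct product of two infinite finitely generated groups. Such a product is \emph{wide} (no cut points in any asymptotic cone), so its divergence is linear. This gives one implication of (1); since linear divergence is strictly slower than quadratic, it also shows a join can never produce quadratic divergence, which I will reuse for (2).

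Next I would treat the non-$\mathcal{CFS}$ case, which supplies the exponential regime and pins down the ``$\mathcal{CFS}$'' half of (2). Combining the inductive description of the thick class in Theorem~\ref{n2} with Definition~\ref{CFS}, one shows that a non-$\mathcal{CFS}$ graph satisfying the standing hypotheses cannot be strongly algebraically thick; then by Theorem~\ref{n1} the group $G_\Gamma$ is hyperbolic relative to a collection of proper special subgroups, whence by Theorem~\ref{bh} its divergence is exponential. Contrapositively, polynomial (in particular linear or quadratic) divergence forces $\Gamma$ to be $\mathcal{CFS}$. It then remains to prove that a $\mathcal{CFS}$, non-join graph yields \emph{exactly} quadratic divergence. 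For the upper bound I would show the $\mathcal{CFS}$ condition makes $G_\Gamma$ thick of order at most one: each induced four-cycle supports a $\Z_2^2$-type product giving a thick-of-order-zero flat, the support condition of Definition~\ref{CFS} covers all of $\Gamma$ by such flats, and the connectivity of the four-cycle graph $\Gamma^4$ lets one chain adjacent flats together along the infinite dihedral subgroups generated by their shared non-adjacent pairs; order-one thickness sharpens the polynomial bound of Theorem~\ref{bh} to a quadratic one. For the matching lower bound I would show that a non-join graph forces divergence to be at least quadratic by exhibiting, for a suitable basepoint and pair of directions, an explicit detour whose length grows like $r^2$ as the avoided radius $r$ grows. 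Assembling these pieces gives both biconditionals: linear $\Leftrightarrow$ join, and quadratic $\Leftrightarrow$ $\mathcal{CFS}$ and non-join.

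The main obstacle is this last step, and specifically the two sharp quadratic estimates. On the upper side the difficulty is promoting the qualitative ``polynomial'' bound to an exact quadratic one by certifying order-one thickness purely from the combinatorics of $\Gamma^4$. On the lower side — the more delicate of the two — the failure of the join structure gives, conceptually, a cut point in every asymptotic cone and hence a group that is not wide, but only \emph{superlinear} divergence comes for free; producing the genuinely \emph{quadratic} detour requires controlling geodesics in $\Sigma_\Gamma$ finely enough to certify that no subquadratic path can escape a ball of radius $r$ while connecting the chosen endpoints. This is precisely where the triangle-free and no-separating-vertex-or-edge hypotheses are used most heavily.
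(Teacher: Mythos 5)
First, a point of reference: the paper does not prove Theorem~\ref{dt} at all --- it is quoted verbatim from Dani--Thomas \cite{MR3314816} --- so your proposal has to be judged against their published argument rather than anything in this text. Measured that way, there is a genuine gap, and it is your treatment of the non-$\mathcal{CFS}$ case. You claim that a non-$\mathcal{CFS}$ graph satisfying the hypotheses cannot be strongly algebraically thick, so that Theorems~\ref{n1} and~\ref{bh} force relative hyperbolicity and hence exponential divergence, giving ``polynomial divergence $\Rightarrow$ $\mathcal{CFS}$.'' This is false, and it contradicts a result cited in this very paper: Dani--Thomas construct, for each $d \geq 3$, triangle-free graphs with no separating vertices or edges (so satisfying exactly the hypotheses of Theorem~\ref{dt}) whose right-angled Coxeter groups have divergence $x^d$. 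These graphs are not $\mathcal{CFS}$ (their divergence is not quadratic), yet their divergence is polynomial, so by Theorem~\ref{bh} they are not relatively hyperbolic, and by the dichotomy of Theorem~\ref{n1} they lie in the thick class $\mathcal{T}$. In other words, $\mathcal{T}$ contains non-$\mathcal{CFS}$ graphs of every order of thickness, and the implication ``non-$\mathcal{CFS}$ $\Rightarrow$ not in $\mathcal{T}$'' that your argument rests on has no proof because it is not true. The linear/quadratic/exponential trichotomy you are implicitly importing is Corollary~\ref{cor:divofracg}, which depends essentially on planarity through Theorem~\ref{sosonice}; Theorem~\ref{dt} makes no planarity assumption. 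The correct ingredient --- and what Dani--Thomas actually prove to close the biconditional in (2) --- is that non-$\mathcal{CFS}$ forces divergence at least \emph{cubic}, and this is established by a direct lower-bound argument on geodesics and walls (disc diagrams) in the Davis complex, not by appeal to the relative-hyperbolicity black boxes.

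The remaining pieces of your outline are sound but stop short precisely where the real work lies. The join case is fine: triangle-freeness and the separating-vertex hypothesis do force $G_\Gamma \cong G_{\Gamma_1} \times G_{\Gamma_2}$ with both factors infinite, hence linear divergence. The quadratic upper bound for $\mathcal{CFS}$ graphs via chaining four-cycle flats along infinite dihedral subgroups to certify thickness of order at most one is legitimate (it is essentially the route of \cite{MR3623669}; Dani--Thomas instead construct explicit quadratic-length avoidant paths in $\Sigma_\Gamma$). But for the quadratic lower bound when $\Gamma$ is not a join you only name the obstacle, and note that your phrase ``exhibiting \dots an explicit detour whose length grows like $r^2$'' has the logic backwards: exhibiting one avoidant path gives an upper bound, while the lower bound requires showing that \emph{every} path between suitable endpoints avoiding the $r$-ball has length at least comparable to $r^2$, which Dani--Thomas do by counting hyperplanes such a path must cross. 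As submitted, both lower bounds needed for statement (2) --- quadratic for non-join and at-least-cubic for non-$\mathcal{CFS}$ --- are respectively unproven and founded on a false reduction, so the proposal does not yet constitute a proof.
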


\subsubsection{Divergence of $3$--manifold groups}

Let $M$ be a compact, orientable $3$--manifold with empty or toroidal boundary. The $3$--manifold $M$ is \emph{geometric} if its interior admits a geometric structure in the sense of Thurston which are $3$--sphere, Euclidean $3$--space, hyperbolic $3$-space, $S^{2} \times \R$, $\mathbb{H}^{2} \times \R$, $\tilde{SL(2,\R)}$, Nil and Sol. We note that a geometric $3$--manifold $M$ is Seifert fibered if its geometry is neither Sol nor hyperbolic.
{A non-geometric $3$--manifold can be cut into hyperbolic and Seifert fibered ``blocks'' along a JSJ decomposition. It is called a \emph{graph manifold} if all the pieces are Seifert fibered, otherwise it is a \emph{mixed manifold}.}

\begin{thm}[Gersten \cite{Gersten94}, Kapovich--Leeb \cite{KL98}]
\label{thm:GKL:divergence}
Let $M$ be a non-geometric manifold. Then $M$ is a graph manifold if and only if the divergence of $\pi_1(M)$ is quadratic, and $M$ is a mixed manifold if and only if the divergence of $\pi_1(M)$ is exponential.
\end{thm}

\begin{rem}
\label{rem:seifertlinear}
%Let $M$ be a compact, orientable $3$--manifold with linear divergence. %We note that if $M$ has geometry of Sol then $\pi_1(M)$ has linear divergence.
%Then $M$ has geometry of Sol or $M$ is a Seifert manifold. However, if the universal cover $\tilde{M}$ of $M$ is a fattened tree crossed with $\R$, then $M$ must be a Seifert manifold.
Let $M$ be a compact, orientable $3$--manifold  with linear divergence. We note that $M$ is geometric, otherwise its divergence is at least quadratic. Also, $M$ is not a hyperbolic manifold because the divergence of a hyperbolic manifold is exponential. If the universal cover $\tilde{M}$ of $M$ is the direct product with $\R$ of a fattening of a tree with all vertex degrees at least $3$, then $M$ is not homeomorhic to $D^{2} \times S^1$, $T^{2} \times I$, or $K^{2} \tilde{\times} I$ (twisted I-bundle over the Klein bottle). Also $M$ is not a Sol manifold, otherwise $M$ is a closed manifold (because we excluded $D^{2} \times S^1$, $T^{2} \times I$, or $K^{2} \tilde{\times} I$) which contradicts to the fact $\tilde{M}$ is the direct product with $\R$ of a fattening of a tree with all vertex degrees at least $3$. Therefore, $M$ must be a Seifert manifold excluding $D^{2} \times S^1$, $T^{2} \times I$, or $K^{2} \tilde{\times} I$. %Indeed, if $M$ is not a Seifert manifold, then $M$ is a Sol manifold. Since $M$ is a Sol manifold and not homeomorphic to $D^{2} \times S^1$, $T^{2} \times I$, or $K^{2} \tilde{\times} I$, it follows that $M$ is closed. Hence $\tilde{M}$ is not homeomorphic to a fattened tree crosses with $\R$, a contradiction.
\end{rem}

\subsection{Colored graphs and bisimilarity}
\label{s2}
In this section, we review the concepts of colored graphs and bisimilarity in \cite{MR2376814} and \cite{MR2727658}. We will use them to classify certain right-angled Coxeter groups in this paper.

\begin{defn}
A \emph{colored graph} is a graph $\Gamma$, a set $C$, and a ``vertex coloring'' $c:V(\Gamma) \to C$.

A \emph{weak covering} of colored graphs is a graph homomorphism $f:\Gamma \to \Gamma'$ which respects colors and has the property that for each $v\in V(\Gamma)$ and for each edge $e' \in E(\Gamma')$ at $f(v)$, there exists an $e\in E(\Gamma)$ at $v$ with $f(e)=e'$.
\end{defn}

\begin{defn}
Colored graphs $\Gamma_1$ and $\Gamma_2$ are \emph{bisimilar}, written $\Gamma_1\sim\Gamma_2$ if $\Gamma_1$ and $\Gamma_2$ weakly cover
some common colored graph.
\end{defn}

\begin{prop}[\cite{MR2376814}]
The bisimilarity relation $\sim$ is an equivalence relation. Moreover, each equivalence class has a unique minimal element up to isomorphism.
\end{prop}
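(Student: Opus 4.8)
The plan is to replace the relational definition of $\sim$ by a canonical invariant: to each colored graph $\Gamma$ I attach a \emph{minimal quotient} $\Gamma_{\min}$ and prove that $\Gamma \sim \Gamma'$ if and only if $\Gamma_{\min} \cong \Gamma'_{\min}$. Since $\cong$ is manifestly an equivalence relation and $\Gamma_{\min}$ will be shown to be the unique minimal member of the class of $\Gamma$, both assertions of the proposition follow at once. Throughout I would work with connected colored graphs, which is the only case used in the paper; note that the edge-lifting axiom forces a weak covering onto a connected graph to be surjective on vertices, since every neighbor of $f(v)$ is hit by a lifted edge and one then inducts along paths.

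First I would isolate the vertex-level notion behind the definition. Call a relation $R \subseteq V(\Gamma) \times V(\Gamma')$ a \emph{bisimulation} if related vertices share a color and, for every $(x,y) \in R$, each neighbor of $x$ is $R$-related to some neighbor of $y$ and conversely. The two formal facts I need are that bisimulations are closed under relational composition and inverse, and that a weak covering $f \colon \Gamma \to \Gamma'$ produces, via its graph $\{(v, f(v))\}$, a bisimulation whose two coordinate projections are both surjective: the homomorphism and color conditions give one matching direction, and the edge-lifting axiom gives the other. Specializing to $\Gamma = \Gamma'$, the largest self-bisimulation is an equivalence relation $\approx$ on $V(\Gamma)$ (reflexive from the identity, symmetric under inverse, transitive under composition); concretely it is the limit of the partition refinement that begins from the color classes and repeatedly separates $u$ from $v$ whenever, for some current block $B$, exactly one of $u,v$ has a neighbor in $B$. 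Finiteness of $V(\Gamma)$ guarantees that this refinement stabilizes.

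Next I would build $\Gamma_{\min} = \Gamma/\!\approx$: its vertices are the $\approx$-classes, each carrying the common color of its members, and two classes are joined by an edge precisely when some---equivalently, by stability of $\approx$, every---representative of one is adjacent to a representative of the other. Stability is exactly what makes this adjacency well defined and makes the quotient map $q \colon \Gamma \to \Gamma_{\min}$ a weak covering, so that $\Gamma \sim \Gamma_{\min}$; reflexivity of $\sim$ (via the identity weak covering) and symmetry (the definition is visibly symmetric) are then immediate. I would also record the characterization of minimality: $\Gamma$ equals its own minimal quotient if and only if $\approx$ is the identity, since collapsing a nontrivial $\approx$-class exhibits $\Gamma$ as a proper weak covering of a smaller graph.

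The heart of the argument, and the step I expect to be the main obstacle, is the rigidity statement that \emph{two minimal colored graphs which are bisimilar are isomorphic}. Given weak coverings $g_i \colon \Gamma \to M_i$ onto minimal graphs, the relation $R = \{(g_1(u), g_2(u)) : u \in V(\Gamma)\}$ is a bisimulation between $M_1$ and $M_2$ that is surjective in both coordinates. If $R$ related a vertex $x$ to two distinct vertices $y, y'$, then $R^{-1} \circ R$ would be a self-bisimulation of $M_2$ relating $y \neq y'$, contradicting minimality of $M_2$; the symmetric argument shows $R$ is single-valued in both directions. Hence $R$ is the graph of a color-preserving bijection, and being a bisimulation it preserves adjacency both ways, so it is an isomorphism $M_1 \cong M_2$. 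With this in hand the remaining claims drop out: if $\Gamma_1 \sim \Gamma_2 \sim \Gamma_3$, composing the given weak coverings with the quotient maps to minimal quotients (weak coverings compose) exhibits $\Gamma_2$ as a weak covering of two minimal graphs, one obtained through each common graph, which are therefore isomorphic; so $\Gamma_1$ and $\Gamma_3$ weakly cover a common minimal graph and $\Gamma_1 \sim \Gamma_3$. Likewise every member of a class weakly covers a minimal graph isomorphic to $\Gamma_{\min}$, which makes $\Gamma_{\min}$ the unique minimal element up to isomorphism. Essentially all the care is in the bookkeeping for the composition and surjectivity of bisimulations, which in turn rests on the connectedness reduction noted at the outset.
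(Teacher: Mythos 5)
The paper contains no proof of this proposition; it is imported from Behrstock--Neumann \cite{MR2376814}, and your argument is essentially the proof given there: your largest self-bisimulation $\approx$ is exactly the stable partition that Behrstock--Neumann compute by iterated color refinement, your quotient $\Gamma_{\min}$ is their canonical minimal graph, and your rigidity step (a coordinate-wise surjective bisimulation between two graphs with trivial $\approx$ is the graph of a color- and adjacency-preserving bijection) is how uniqueness is obtained there as well, so the two routes differ only in packaging (relational bisimulations versus refined colorings). One caveat you should make explicit, since it is the only place your write-up could actually fail: an $\approx$-class can contain two \emph{adjacent} vertices --- for instance a two-vertex tree with both vertices colored black has all of $V\times V$ as a bisimulation, and such trees do arise as visual decomposition trees in this paper --- so $\Gamma/\!\approx$ may acquire loops. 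You must therefore work, as \cite{MR2376814} effectively does, in the category of colored graphs allowed to carry loops; under a strict simple-graph reading your quotient map $q$ is not even a graph homomorphism on an edge inside an $\approx$-class, and moreover bisimilarity classes themselves change if loops are banned. A second, minor point: you invoke rigidity for $\Gamma_{\min}$, which presupposes that $\Gamma_{\min}$ has trivial $\approx$; either prove this by pulling a self-bisimulation of $\Gamma/\!\approx$ back to $\Gamma$ (where it must lie inside $\approx$), or sidestep it by iterating the quotient, which terminates by finiteness of the vertex set.
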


\subsection{Right-angled Artin groups and connection to 3--manifold groups}

We now review the concept of right-angled Artin groups and the works of Behrstock-Neumann \cite{MR2376814} and Gordon \cite{MR2199348} on connections between right-angled Artin groups and 3--manifold groups.

\begin{defn}
Given a finite simplicial graph $\Gamma$, the associated \emph{right-angled Artin group} $A_{\Gamma}$ has generating set $S$ the vertices of $\Gamma$, and relations $st = ts$ whenever $s$ and $t$ are adjacent vertices.
%Let $S_1$ be a subset of $S$. The subgroup of $A_{\Gamma}$ generated by $S_1$ is a right-angled Artin group $A_{\Gamma_1}$, where $\Gamma_1$ is the induced subgraph of $\Gamma$ with vertex set $S_1$ (i.e.~$\Gamma_1$ is the union of all edges of $\Gamma$ with both endpoints in $S_1$). The subgroup $A_{\Gamma_1}$ is called a \emph{special subgroup} of $A_{\Gamma}$. 

%\emph{reduced word} for a group element $g$ in $A_{\Gamma}$ is a minimal length word in the free group $F(S)$ representing $g$.
\end{defn}

The following two theorems show some connections between right-angled Artin groups and 3--manifold groups.

\begin{thm}[Gordon \cite{MR2199348}]
\label{Gor}
The following are equivalent for a one-ended right-angled Artin group $A_\Gamma$:
\begin{enumerate}
\item $A_\Gamma$ is virtually a 3-manifold group;
\item $A_\Gamma$ is a 3-manifold group; and
\item $\Gamma$ is either a tree or a triangle.
\end{enumerate}
\end{thm}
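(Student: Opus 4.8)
The plan is to prove the three implications $(2)\Rightarrow(1)$, $(3)\Rightarrow(2)$, and $(1)\Rightarrow(3)$, of which only the last is substantial. The implication $(2)\Rightarrow(1)$ is immediate, since any $3$--manifold group is trivially virtually a $3$--manifold group. For $(3)\Rightarrow(2)$ I would treat the two cases separately. If $\Gamma$ is a triangle then $A_\Gamma\cong\mathbb{Z}^3=\pi_1(T^3)$, so $A_\Gamma$ is a closed $3$--manifold group. If $\Gamma$ is a tree I would construct a compact orientable graph manifold $M_\Gamma$ with toroidal boundary satisfying $\pi_1(M_\Gamma)\cong A_\Gamma$, by induction on the number of edges.

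For the inductive construction I would start from a single vertex, realized by a solid torus $D^2\times S^1$ with $\pi_1=\mathbb{Z}$, and from a single edge, realized by $T^2\times I$ with $\pi_1=\mathbb{Z}^2$. Adding a leaf $w$ adjacent to an existing vertex $v$ replaces $A_\Gamma$ by the amalgam $A_\Gamma *_{\langle v\rangle}(\langle v\rangle\times\langle w\rangle)$ over the cyclic group $\langle v\rangle$, because a leaf commutes only with its unique neighbour. Geometrically I would realize $\langle v\rangle$ by the core of a properly embedded annulus in $\partial M_\Gamma$ and glue on a copy of $T^2\times I$ (whose fundamental group is $\langle v\rangle\times\langle w\rangle$) along an annulus whose core also represents $v$; van Kampen's theorem then yields exactly the desired amalgam, and the result is again a graph manifold with nonempty toroidal boundary. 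The care needed here is to keep the gluing curves embedded and the gluing compatible with orientations, but this is routine once the Seifert-piece/annulus bookkeeping is set up; this direction is essentially Droms' construction.

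The heart of the theorem is $(1)\Rightarrow(3)$, and here the key point is to work only with commensurability invariants, so that the hypothesis ``virtually a $3$--manifold group'' is as useful as ``a $3$--manifold group.'' First I would pass to a finite index subgroup $G\le A_\Gamma$ that is a $3$--manifold group; since $A_\Gamma$ is one-ended (as $\Gamma$ is connected with at least two vertices), so is $G$, hence the underlying manifold is irreducible with infinite torsion-free fundamental group, and therefore aspherical with $\operatorname{cd}G\le 3$. Now I invoke two facts. First, $3$--manifold groups are coherent (Scott), and coherence passes both up and down through finite index, so $A_\Gamma$ is coherent; by Droms' characterization of coherent right-angled Artin groups this forces $\Gamma$ to be chordal, i.e.\ to contain no induced cycle of length at least four. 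Second, cohomological dimension is a commensurability invariant for torsion-free groups, and $\operatorname{cd}A_\Gamma$ equals the size of the largest clique of $\Gamma$; hence $\Gamma$ contains no $K_4$, so its clique number is at most $3$.

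To finish I would split on whether $\Gamma$ is triangle-free. If $\Gamma$ has no triangle then, being chordal, it can contain no cycle at all (a shortest cycle would be induced of length at least four, contradicting chordality), so the connected graph $\Gamma$ is a tree. If $\Gamma$ contains a triangle then $A_\Gamma$ contains $\mathbb{Z}^3$; a $\mathbb{Z}^3$ subgroup of a closed aspherical $3$--manifold group forces the manifold to be flat, hence finitely covered by $T^3$, by geometrization, so $G$ and therefore $A_\Gamma$ is virtually $\mathbb{Z}^3$. But a right-angled Artin group is virtually abelian only if it is abelian, i.e.\ only if $\Gamma$ is complete; combined with the absence of $K_4$ this yields $\Gamma=K_3$, a triangle. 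I expect the main obstacle to be organizing $(1)\Rightarrow(3)$ so that every obstruction used is genuinely inherited by and from finite index subgroups, since this is precisely the content that upgrades Droms' theorem (about $3$--manifold groups) to Gordon's theorem (about virtual $3$--manifold groups); the tree construction in $(3)\Rightarrow(2)$, while requiring attention to embeddedness of the gluing annuli, is the more mechanical part.
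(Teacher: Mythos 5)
This theorem is quoted in the paper from Gordon \cite{MR2199348}; the paper itself contains no proof of it, so there is no internal argument to compare against. Judged on its own, your proposal is a correct reconstruction along essentially the lines of the Droms--Gordon argument in the literature: the implication $(3)\Rightarrow(2)$ via the inductive annulus gluing of $T^2\times I$ pieces is Droms' construction, and the commensurability-invariant strategy for $(1)\Rightarrow(3)$ --- coherence of $3$--manifold groups (Scott) plus Droms' chordality criterion to rule out induced cycles of length at least four, Serre's invariance of cohomological dimension plus $\operatorname{cd}A_\Gamma = \mathrm{clique}(\Gamma)$ to rule out $K_4$, and the $\mathbb{Z}^3$--forces--flat analysis to reduce the triangle case to $\Gamma=K_3$ --- is precisely the content that upgrades Droms' theorem to the virtual statement, as you say.

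Two small points deserve a sentence each in a written-up version. First, in the triangle case you pass directly to a \emph{closed} aspherical $3$--manifold; the justification is the half-line you already have the ingredients for: a compact aspherical $3$--manifold with nonempty boundary deformation retracts to a $2$--complex, so its group has $\operatorname{cd}\le 2$, whereas $\mathbb{Z}^3\le G$ (note $G\cap\mathbb{Z}^3$ has finite index in $\mathbb{Z}^3$, hence is again $\mathbb{Z}^3$) forces $\operatorname{cd}G=3$, hence the core is closed. Second, the asphericity and irreducibility claims for the Scott core should be routed through the orientation double cover and the sphere theorem (with $P^2$--irreducibility in the non-orientable case); since every step of your argument uses only commensurability invariants, replacing $G$ by an index-two subgroup costs nothing. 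With those glosses the proof is complete.
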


\begin{thm}[Behrstock-Neumann \cite{MR2376814}]
\label{bnam}
A right-angled Artin group $A_\Gamma$ is quasi-isometric to a 3-manifold group if and only if it is a 3-manifold group (and is hence as in Theorem \ref{Gor}).
\end{thm}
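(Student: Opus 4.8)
The ``if'' direction is immediate: a $3$--manifold group is quasi-isometric to itself. So assume $A_\Gamma$ is quasi-isometric to $\pi_1(M)$ for some compact orientable $3$--manifold $M$ with empty or toroidal boundary; the plan is to show that $A_\Gamma$ is one of the groups in Theorem~\ref{Gor}, i.e. that $\Gamma$ is a tree or a triangle, and then to invoke Gordon's theorem. Since the number of ends is a quasi-isometry invariant, I would first reduce to the one-ended case: if $A_\Gamma$ is two-ended then $\Gamma$ is a single vertex and $A_\Gamma\cong\Z$ is already a $3$--manifold group, while if $A_\Gamma$ is infinitely-ended then $\Gamma$ is disconnected and $A_\Gamma$ is a nontrivial free product, in which case an accessibility/Stallings argument (together with the fact that the one-ended factors of $\pi_1(M)$ are themselves $3$--manifold groups) reduces the problem to the one-ended free factors. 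So assume from now on that $\Gamma$ is connected with at least two vertices, whence $A_\Gamma$ is one-ended.

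By Behrstock--Charney~\cite{MR2874959} the divergence of the one-ended right-angled Artin group $A_\Gamma$ is linear or quadratic, and divergence is a quasi-isometry invariant, so $\pi_1(M)$ has linear or quadratic divergence. By Theorem~\ref{thm:GKL:divergence} this rules out $M$ being hyperbolic or mixed (those have exponential divergence), so $M$ is a graph manifold (quadratic case) or, by Remark~\ref{rem:seifertlinear}, a Seifert fibered manifold (linear case). Next I would bound the clique number of $\Gamma$: a clique of size $k$ gives an undistorted $\Z^k\le A_\Gamma$, hence a $k$--dimensional quasi-flat in $\pi_1(M)$; since a $3$--manifold group contains no $\Z^4$ and admits no $4$--dimensional quasi-flat, $\Gamma$ has clique number at most $3$.

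In the linear case $\Gamma$ is a join $\Gamma=\Gamma_1*\Gamma_2$ (Theorem~\ref{dt}) and $A_\Gamma=A_{\Gamma_1}\times A_{\Gamma_2}$ with both factors infinite, while $\pi_1(M)$ is virtually $\Z^2$, $\Z^3$, or $F\times\Z$. Here I would invoke quasi-isometric rigidity of direct products to match the two product decompositions: the factors of a Seifert group of linear divergence are quasi-isometric to $\Z$ and to a free group, so one of the $\Gamma_i$ must be a single vertex and the other edgeless (giving a star, hence a tree), or the two factors assemble to $\Z^2$ or $\Z^3$ (an edge or a triangle). In particular, configurations such as $\Gamma=C_4$, with $A_\Gamma=F_2\times F_2$, are excluded precisely because $F_2$ is not quasi-isometric to $\Z$.

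The quadratic case is the heart of the argument and the main obstacle. Here $\Gamma$ is not a join and $A_\Gamma$ is quasi-isometric to a graph manifold group. I would follow Behrstock--Neumann's strategy of building an explicit geometric model for $A_\Gamma$ as a ``tree of flats'' and comparing it, through the bisimilarity of the underlying colored decomposition trees (Section~\ref{s2}), with the standard model of a graph manifold. The content is that any induced cycle in $\Gamma$ (equivalently, any essential branching of the four-cycle graph $\Gamma^4$ incompatible with a tree) produces product regions that cannot be matched in a graph-manifold model, forcing $\Gamma$ to contain no induced cycle, i.e. to be a tree; since $\Gamma$ is not a join it is then a tree of diameter at least $3$. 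Combining the two cases, $\Gamma$ is a tree or a triangle, and Gordon's Theorem~\ref{Gor} shows $A_\Gamma$ is a $3$--manifold group, completing the converse. The delicate step throughout is the passage from the coarse data (divergence, quasi-flats, the product or tree-of-flats structure) back to the combinatorics of $\Gamma$, which is exactly where the geometric-model and bisimilarity machinery is indispensable.
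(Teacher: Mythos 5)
There is a genuine gap, and it sits exactly at what you yourself call the heart of the argument. First, note that the paper does not prove this statement at all: Theorem~\ref{bnam} is quoted from Behrstock--Neumann \cite{MR2376814} as background, so the benchmark is their original argument, which is far shorter than your outline. It combines quasi-isometric rigidity of $3$--manifold groups (a finitely generated group quasi-isometric to a $3$--manifold group is \emph{virtually} a $3$--manifold group, via Kapovich--Leeb's preservation of the geometric decomposition together with the geometry-by-geometry rigidity theorems, and Papasoglu--Whyte for the infinitely-ended reduction) with clause (1)$\Leftrightarrow$(2) of Gordon's Theorem~\ref{Gor}, which upgrades ``virtually a $3$--manifold group'' to ``is a $3$--manifold group'' for one-ended right-angled Artin groups. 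The paper states Gordon's theorem with the ``virtually'' clause precisely so this deduction is available; your proposal never uses clause (1) of Theorem~\ref{Gor} and instead tries to re-derive everything from divergence.

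The concrete gap is your quadratic case: you write that you ``would follow Behrstock--Neumann's strategy'' and assert that any induced cycle in $\Gamma$ produces product regions that cannot be matched in a graph-manifold model, ``forcing $\Gamma$ to be a tree.'' No argument is given for this, and it is precisely the hard content of the theorem --- as written, the step defers to the very result being proved. For instance, showing that $A_{C_5}$ (quadratic divergence, not a join) is not quasi-isometric to any graph manifold group genuinely requires either the analysis of maximal product regions and the bisimilarity invariant, or $3$--manifold QI rigidity; neither is supplied, and your heuristic about $\Gamma^4$ is not a proof. The linear case also has holes: linear-divergence Seifert groups include not only the virtually $\Z^2$, $\Z^3$, and $F\times\Z$ groups on your list but also Nil groups and groups virtually $\pi_1(\Sigma)\times\Z$ for $\Sigma$ a \emph{closed} hyperbolic surface (equivalently, groups quasi-isometric to $\HH^2\times\R$, including $\widetilde{SL(2,\R)}$-manifold groups), which must be ruled out separately; and your factor-matching step (``one of the $\Gamma_i$ must be a single vertex\dots because $F_2$ is not quasi-isometric to $\Z$'') silently invokes a coarse product-rigidity theorem --- a quasi-isometry of direct products need not respect the factors, so this requires a citation in the spirit of Kapovich--Kleiner--Leeb rather than an appeal to the factors' QI types.
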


\section{Graph decomposition}
\label{sec:graphdecomposition}

In this section, we study the ``tree structure'' of graphs $\Gamma$ satisfying Standing Assumptions. This structure will help us study corresponding right-angled Coxeter groups $G_\Gamma$ in next section.

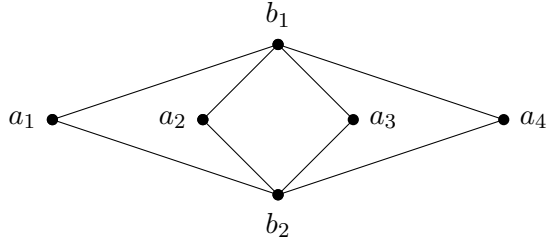
\begin{figure}
\begin{tikzpicture}[scale=1.0]

\draw (0,0) node[circle,fill,inner sep=1.5pt, color=black](1){} -- (1,1) node[circle,fill,inner sep=1.5pt, color=black](1){}-- (2,0) node[circle,fill,inner sep=1.5pt, color=black](1){}-- (1,-1) node[circle,fill,inner sep=1.5pt, color=black](1){} -- (0,0) node[circle,fill,inner sep=1.5pt, color=black](1){};

\draw (-2,0) node[circle,fill,inner sep=1.5pt, color=black](1){} -- (1,1) node[circle,fill,inner sep=1.5pt, color=black](1){}-- (4,0) node[circle,fill,inner sep=1.5pt, color=black](1){}-- (1,-1) node[circle,fill,inner sep=1.5pt, color=black](1){} -- (-2,0) node[circle,fill,inner sep=1.5pt, color=black](1){};

\node at (1,1.4) {$b_1$};\node at (1,-1.4) {$b_2$}; \node at (-2.4,0) {$a_1$};\node at (-0.4,0) {$a_2$}; \node at (2.4,0) {$a_3$};\node at (4.4,0) {$a_4$};

\end{tikzpicture}

\caption{The 4-cycle with vertices $a_2, a_3, b_1$, and $b_2$ is separating but not strongly separating with respect to the current choice of planar embedding}
\label{aja}
\end{figure}

\begin{defn}
%\label{de1}
A 4--cycle $\sigma$ of a graph $\Gamma$ \emph{separates $\Gamma$} if $\Gamma-\sigma$ has at least two components.
\end{defn}

We now talk about a stronger notion of ``separating 4-cycle'' of planar graph. This notion depends on the choice of embedding map of the ambient graph into the plane and the notion is based on Jordan Curve Theorem.

\begin{defn}
\label{de1}
Let $\Gamma$ be a graph satisfying Standing Assumptions and let $f:\Gamma\to\RR^2$ be an embedding. A 4--cycle $\sigma$ of $\Gamma$ \emph{strongly separates $\Gamma$ with respect to $f$} if $f(\Gamma)$ has non-empty intersection with both components of $\RR^2-f(\sigma)$.
\end{defn}

\begin{rem}
If the map $f$ in Definition \ref{de1} is clear from the context, we just say the 4--cycle $\sigma$ strongly separates $\Gamma$. It is clear that if a 4--cycle $\sigma$ strongly separates a graph $\Gamma$ with respect to some embedding map $f$, then $\sigma$ separates $\Gamma$ in the usual sense. However, if we fix an embedding $f$ of the graph $\Gamma$ into the plane, then a separating 4-cycle of $\Gamma$ is not necessarily strongly separating with respect to $f$. In fact, let $\Gamma$ be a planar graph with the choice of embedding $f$ in the plane as in Figure \ref{aja}, the 4-cycle with vertices $a_2, a_3, b_1$, and $b_2$ is separating but not strongly separating with respect to $f$.
\end{rem}

\begin{defn}
Assume a 4--cycle $\sigma$ strongly separates a graph $\Gamma$ with respect to an embedding $f$. Let $U_1$ and $U_2$ be two components of $\RR^2-f(\sigma)$. Let $\Gamma_i$ be $\sigma$ together with components of $\Gamma-\sigma$ that are mapped into $U_i$ via $f$. Then, $\Gamma=\Gamma_1\cup\Gamma_2$ and $\Gamma_1\cap\Gamma_2=\sigma$. We call the pair $(\Gamma_1,\Gamma_2)$ a \emph{strong visual decomposition} of $\Gamma$ along $\sigma$ with respect to $f$. If the embedding $f$ is clear from the context, we just say the pair $(\Gamma_1,\Gamma_2)$ is a strong visual decomposition of $\Gamma$ along $\sigma$ 
\end{defn}

Basically, the following lemma shows that each such subgraph $\Gamma_i$ in a strong visual decomposition of the graph $\Gamma$ above inherits important properties of the ambient graph $\Gamma$.

\begin{lem}
\label{leb}
Let $\Gamma$ be a graph satisfying Standing Assumptions. Let $(\Gamma_1,\Gamma_2)$ be a strong visual decomposition of $\Gamma$ along a 4-cycle $\sigma$ with respect to some embedding $f$. Then each subgraph $\Gamma_i$ also satisfies Standing Assumptions. Moreover, if $\Gamma$ is $\mathcal{CFS}$, then each subgraph $\Gamma_i$ is also $\mathcal{CFS}$. 
\end{lem}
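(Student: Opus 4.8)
The plan is to verify the five Standing Assumptions for each $\Gamma_i$ in turn and then to transfer the $\mathcal{CFS}$ condition. The workhorse throughout is a single planarity fact: since $f$ is an embedding and $\sigma$ is a $4$--cycle, $f(\sigma)$ is a Jordan curve that no edge of $\Gamma$ can cross, so no edge joins a vertex of $U_1$ to a vertex of $U_2$, and any edge whose interior meets $U_i$ has an endpoint that is an honest vertex of $\Gamma$ lying in $U_i$. In particular $\Gamma_i$ is an \emph{induced} subgraph of $\Gamma$, so triangle-freeness and planarity are inherited at once. Connectedness follows because $\Gamma$ is connected and $\sigma$ separates it, so each component of $\Gamma-\sigma$ has an edge to the connected set $\sigma$; and the "at least five vertices" clause follows from strong separation, since $f(\Gamma)\cap U_i\neq\emptyset$ together with the crossing fact forces a genuine vertex of $\Gamma$ into $U_i$, giving the four vertices of $\sigma$ plus at least one more.

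The only nontrivial clause in the first assertion is the absence of separating vertices and edges. For a connected graph on at least three vertices, having no cut vertex already forces $2$--connectedness and hence the absence of bridges, so it suffices to rule out cut vertices. I would argue by contradiction: if $v$ were a cut vertex of $\Gamma_i$, let $B$ be a component of $\Gamma_i-v$ not containing the connected set $\sigma-v$ (note $\sigma-v$ is a path, so lies in one component). Since no vertex of $B$ lies on $\sigma$, the crossing fact shows every neighbour in $\Gamma$ of a vertex of $B$ already lies in $\Gamma_i$; hence $B$ is cut off from the rest of $\Gamma-v$, which contains $\sigma-v$ and all of $\Gamma_{3-i}$. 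Thus $v$ would be a cut vertex of $\Gamma$, contradicting the Standing Assumptions.

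The heart of the lemma, and where I expect the real difficulty, is the $\mathcal{CFS}$ statement. The key structural input is a classification of the $4$--cycles of $\Gamma$ relative to $\sigma$: using the Jordan curve $f(\sigma)$ and triangle-freeness (which forbids chords, so the only non-adjacent pairs inside $\sigma$ are its two diagonals), every $4$--cycle is either contained in $\Gamma_i$, contained in $\Gamma_{3-i}$, or \emph{straddles} $\sigma$, and a straddling cycle must have the form $p\,a\,q\,b$ with $\{p,q\}$ a diagonal of $\sigma$, $a\in U_i$, $b\in U_{3-i}$. Such a cycle folds into $\Gamma_i$: replacing $b$ by the $\sigma$--vertex $u$ adjacent to both $p$ and $q$ yields an induced $4$--cycle $p\,a\,q\,u\subseteq\Gamma_i$ that still contains $a$ and shares the diagonal $\{p,q\}$ with $\sigma$. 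For triangle-free graphs the clique factor of a $\mathcal{CFS}$ decomposition is forced to be empty (a nonempty clique joined to an edge would give a triangle), so $\mathcal{CFS}$ says exactly that $\Gamma^4$ has a connected component $\mathcal{C}$ whose support is all of $V(\Gamma)$.

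With this I would define a retraction $r$ to $\Gamma_i^4$ fixing cycles contained in $\Gamma_i$, folding straddling ones as above, and sending every cycle contained in $\Gamma_{3-i}$ to $\sigma$. The main obstacle, and the reason the classification is needed, is that a path in $\mathcal{C}$ may wander onto the $\Gamma_{3-i}$ side; the decisive point is that every side-change across an edge of $\Gamma^4$ occurs along a shared \emph{non-adjacent} pair, and such a pair lying in $\Gamma_i\cap\Gamma_{3-i}=\sigma$ must be a diagonal of $\sigma$. A short case analysis on consecutive cycles then shows that $r$ carries each edge of $\mathcal{C}$ to two cycles that are equal, adjacent in $\Gamma_i^4$, or both adjacent to $\sigma$; hence $r$ sends paths in $\mathcal{C}$ to walks lying in a single component of $\Gamma_i^4$. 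To finish, for each $v\in U_i$ I take a cycle $\tau_v\in\mathcal{C}$ through $v$ and a cycle $\delta\in\mathcal{C}$ meeting the non-empty set $U_{3-i}$ (one exists since $\mathcal{C}$ has full support), and retract a path from $\tau_v$ to $\delta$: this connects a cycle through $v$ to $\sigma$ (since $r(\delta)$ is $\sigma$ or adjacent to it) inside $\Gamma_i^4$. Thus the component of $\sigma$ in $\Gamma_i^4$ supports every vertex of $U_i$ as well as the vertices of $\sigma$, i.e. all of $V(\Gamma_i)$, which is precisely the $\mathcal{CFS}$ condition for $\Gamma_i$.
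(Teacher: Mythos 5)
Your verification of the Standing Assumptions contains one genuine error: the reduction of separating edges to cut vertices. In this paper (following the Dani--Thomas convention, which is the one relevant to one-endedness of $G_\Gamma$, and which the paper's own proof adopts by treating $C$ as a set of vertices and working in $\Gamma_1-C$ and $\Gamma-C$), a \emph{separating edge} is an edge whose removal \emph{together with both of its endpoints} disconnects the graph --- not a bridge in the edge-deletion sense. Under this convention your claim that the absence of cut vertices ``forces the absence of bridges, so it suffices to rule out cut vertices'' fails: take two $4$-cycles glued along an edge $uv$, i.e.\ the graph with vertices $u,v,a,b,c,d$ and edges $uv,va,ab,bu,vc,cd,du$. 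This graph is connected, triangle-free, planar, and has no cut vertex, yet deleting the closed edge $\{u,v\}$ leaves the two disjoint edges $ab$ and $cd$. So as written, your proof leaves the ``no separating edges'' clause for $\Gamma_i$ unverified. The gap is local and fixable: your contradiction argument for a cut vertex goes through verbatim with $v$ replaced by the closed edge $C=\{u,v\}$ --- the only extra point is that $\sigma-C$ is still connected (removing at most two adjacent vertices from a $4$-cycle leaves a path or an edge), after which a component $B$ of $\Gamma_i-C$ missing $\sigma-C$ has all its $\Gamma$-neighbours inside $\Gamma_i$ by the crossing fact, so $C$ separates $\Gamma$. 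This is exactly how the paper handles vertices and edges in a single stroke.

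The remainder of the proposal is correct, and the $\mathcal{CFS}$ half takes a genuinely different route from the paper's. The paper first shows that $\sigma$ itself lies in the full-support component $K$ of $\Gamma^4$, then works with the subgraph $K_1$ of $K$ spanned by the $4$-cycles inside $\Gamma_1$; its key device is that in a triangle-free graph any path in $\Gamma^4$ can be refined so that consecutive cycles share \emph{two adjacent edges} (three vertices), and an analysis of the first cycle $P_k$ containing a diagonal of $\sigma$ then shows the diagonal must be $\{m,w\}$ with $m$ the shared middle vertex and $w\in\sigma$, forcing $P_k\subseteq\Gamma_1$ --- so refined paths never straddle $\sigma$ at the critical junction. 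You instead confront straddling cycles head-on: your classification ($p\,a\,q\,b$ with $\{p,q\}$ a diagonal of $\sigma$ and $a,b$ on opposite sides) is correct, the fold $p\,a\,q\,u$ is induced precisely because of triangle-freeness, and your case analysis of how an edge of $\Gamma^4$ interacts with $r$ (equal images, adjacent images, or both images adjacent to $\sigma$, with side-changes only along diagonals of $\sigma$ since $\Gamma_1\cap\Gamma_2=\sigma$) does check out, including the one delicate case of two straddling cycles over different diagonals sharing their off-$\sigma$ pair, which your routing through $\sigma$ absorbs. Your retraction buys a clean global statement --- all of $\mathcal{C}$ projects into the single component of $\Gamma_i^4$ containing $\sigma$ --- at the cost of the case analysis, whereas the paper's refinement trick avoids straddling cycles altogether; the observation that triangle-freeness kills the clique factor, so that $\mathcal{CFS}$ reduces to a full-support component of the four-cycle graph, is also used implicitly by the paper. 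Both arguments are sound and of comparable length.
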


\begin{proof}
%We first prove that if $\Gamma$ has a separating vertex or a separating edge, then either $\Gamma_1$ or $\Gamma_2$ has a separating edge or a separating vertex. Let $C$ be a separating edge or separating vertex of $\Gamma$. Let $v$ be a vertex in $\sigma-C$. Since $\Gamma-C$ is not connected, there is a vertex $u$ in $\Gamma-C$ so there is no path in $\Gamma-C$ connecting $u$ and $v$. The vertex $u$ lies in $\Gamma_1$ or $\Gamma_2$ (say $\Gamma_1$). Therefore, there is no path in $\Gamma_1-C$ connecting $u$ and $v$. Since $\Gamma_1$ is connected, then $\Gamma_1\cap C$ is non-empty. Therefore, $\Gamma_1\cap C$ is a separating edge or a separating vertex of $\Gamma_1$. Thus, if each subgraph $\Gamma_i$ has no separating vertex and no separating edge, then $\Gamma$ has no separating vertex and no separating edge.

It is clear that each graph $\Gamma_i$ is connected, triangle-free, planar, and has at least $5$ vertices. We now prove that if either $\Gamma_1$ or $\Gamma_2$ (say $\Gamma_1$) has a separating vertex or a separating edge $C$, then $C$ is also a separating vertex or separating edge of $\Gamma$. Let $v$ be a vertex in $\sigma-C$. Since $C$ is a separating vertex or separating edge of $\Gamma_1$, there is a vertex $u$ in $\Gamma_1-C$ such that there is no path in $\Gamma_1-C$ connecting $u$ and $v$. We observe that $\sigma-C$ is a connected set in $\Gamma_1$. Then $u$ is not a vertex of $\sigma$. We will prove that there is no path in $\Gamma-C$ connecting $u$ and $v$. Assume for the contradiction that there is a path $\alpha$ in $\Gamma-C$ connecting $u$ and $v$. We can choose a connected subpath $\beta$ of $\alpha$ connecting $u$ and some vertex $v'$ of $\sigma$ such that $\beta \cap \sigma =\{v'\}$. It is clear that $\beta$ is a path in $\Gamma_1$. Again $\sigma-C$ is a connected set in $\Gamma_1$ and two vertices $v$, $v'$ both lie in $\sigma-C$. There is a path in $\Gamma_1$ connecting $u$ and $v$ which is a contradiction. This implies that there is no path in $\Gamma-C$ connecting $u$ and $v$. Therefore, $C$ is a separating vertex or separating edge of $\Gamma$ which is a contradiction. Thus, each subgraph $\Gamma_i$ has no separating vertex and no separating edge.

%We now assume that each subgraph $\Gamma_i$ is $\mathcal{CFS}$. We will prove that $\Gamma$ is also $\mathcal{CFS}$. In fact, let $u$ be an arbitrary vetex of $\Gamma$. Then $u$ belongs to $\Gamma_1$ or $\Gamma_2$ (say $\Gamma_1$). Since $\Gamma_1$ is $\mathcal{CFS}$, there is a 4-cycle containing $u$.Now it suffices to prove that if $\gamma$ is a 4-cycle in $\Gamma$, then there is a sequence of 4--cycles $\alpha=Q_1,Q_2,\cdots,Q_{n-1}, Q_n=\gamma$ such that $Q_i$ and $Q_{i+1}$ share a pair if adjacent edges. If $\sigma\cap\gamma$ contains two non-adjacent vertices, we can construct the such sequence of 4-cycles easily. Otherwise, we observe that $\gamma-\sigma$ is connected. Therefore, $\gamma$ is contained in $\Gamma_1$ or $\Gamma_2$ (say $\Gamma_1$). Since $\Gamma_1$ is $\mathcal{CFS}$ and both 4-cycles $\sigma$ and $\gamma$ is contained in $\Gamma_1$, there is a sequence of 4--cycles $\alpha=Q_1,Q_2,\cdots,Q_{n-1}, Q_n=\gamma$ such that $Q_i$ and $Q_{i+1}$ share a pair if adjacent edges.

We now assume that $\Gamma$ is $\mathcal{CFS}$ and we will prove that each $\Gamma_i$ is also $\mathcal{CFS}$. We only need to prove $\Gamma_1$ is $\mathcal{CFS}$ and the proof for $\Gamma_2$ is analogous. Let $K$ be a component of $\Gamma^4$ whose support is the entire vertex set of $\Gamma$. Let $K_1$ be an induced subgraph of $K$ that contains all vertices which are 4-cycles of $\Gamma_1$. It suffices to prove that $K_1$ is connected and whose support is the entire vertex set of $\Gamma_1$. 

We first prove that the 4-cycle $\sigma$ is a vertex of $K$. Let $u_1$ be a vertex in $\Gamma_1-\sigma$ and let $u_2$ be a vertex in $\Gamma_2-\sigma$. Then there is a sequence of 4--cycles $Q_1,Q_2,\cdots, Q_n$ which are vertices of $K$ such that $Q_1$ contains $u_1$ and $Q_n$ contains $u_2$ and $Q_i\cap Q_{i+1}$ is the union of two adjacent edges for each $i$. We now prove that some $Q_k$ contains two non-adjacent vertices of $\sigma$. Assume for the contradiction that no $Q_i$ contains two non-adjacent vertices of $\sigma$. Therefore, each $Q_i$ is contained in $\Gamma_1$ or $\Gamma_2$. It is clear that $Q_1$ is contained in $\Gamma_1$ and $Q_n$ is contained in $\Gamma_2$. Then there is $Q_{\ell}$ and $Q_{\ell+1}$ such that $Q_{\ell}$ is contained in $\Gamma_1$ and $Q_{\ell+1}$ is contained in $\Gamma_2$. Therefore, $Q_\ell\cap Q_{\ell+1}$ is contained in the 4-cycle $\sigma$. This implies that both $Q_{\ell}$ and $Q_{\ell+1}$ contain two non-adjacent vertices of $\sigma$ which is a contradiction. Therefore, some $Q_k$ contains two non-adjacent vertices of $\sigma$. Thus, there is a path in $\Gamma^4$ connecting $Q_k$ and $\sigma$. This implies that $\sigma$ is a vertex of $K$. Therefore, $\sigma$ is also a vertex of $K_1$.

We now prove $K_1$ is connected and it suffices to prove each vertex in $K_1$ is connected to $\sigma$ by a path in $K_1$. Let $\gamma$ be an arbitrary 4-cycle which is a vertex of $K_1$. If $\gamma$ contains two non-adjacent vertices of $\sigma$, then it is clear that there is a path in $K_1$ of length at most 2 connecting $\gamma$ and $\sigma$. Otherwise, let $\gamma=P_0,P_1,P_2,\cdots, P_m=\sigma$ be the sequence of vertices of $K$ such that $P_i\cap P_{i+1}$ is the union of two adjacent edges. Let $k$ be the smallest number such that $P_k$ contains two non-adjacent vertices of $\sigma$. Therefore, $P_i$ is contained in $\Gamma_1$ for each $i\leq k-1$. Thus, $P_i$ is a vertex in $K_1$ for each $i\leq k-1$. Let $b$ and $c$ be two non-adjacent vertices of $P_{k-1}\cap P_k$. Then it is clear that $b$ and $c$ are not non-adjacent vertices of $P_k\cap \sigma$. This implies that $P_k$ is also contained in $\Gamma_1$. Therefore, $P_k$ is also a vertex of $K_1$. Since $P_k$ contains two non-adjacent vertices of $\sigma$, there is a path of length at most 2 in $K_1$ connecting $P_k$ and $\sigma$. Thus, there is a path in $K_1$ connecting $\gamma$ and $\sigma$. Therefore, $K_1$ is connected.

We now prove that the support of $K_1$ is the entire vertex set of $\Gamma_1$. Let $u$ be a vertex in $\Gamma_1$. If $u$ is a vertex of $\sigma$ or $u$ is adjacent to non-adjacent vertices of $\sigma$, then $u$ is in the support of $K_1$ clearly. Otherwise, let $P$ be a vertex of $K$ that contains $u$. Then $P$ does not contain two non-adjacent vertices of $\sigma$. Therefore, $P$ is contained in $\Gamma_1$. Thus, $P$ is a vertex of $K_1$. Thus, $u$ belongs to the support of $K_1$. This implies that the support of $K_1$ is the entire vertex set of $\Gamma_1$. Therefore, $\Gamma_1$ is $\mathcal{CFS}$.

%Let $u$ be an arbitrary vertex of $\Gamma_1$. Then there is a 4-cycle $\gamma$ that contains $u$ and $\gamma$ is a vertex of $K$. If $\gamma$ is a 4-cycle of $\Gamma_1$, then $\gamma$ is a vertex of $K_1$. Therefore, $u$ belongs to the support of $K_1$. We now assume $\gamma$ is not a 4-cycle of $\Gamma_1$ 
\end{proof}

\begin{defn}
Let $\Gamma$ be a graph satisfying Standing Assumptions and $f:\Gamma\to \RR^2$ be an embedding. We denote $n(\Gamma,f)$ the number of 4-cycles in $\Gamma$ that strongly separates $\Gamma$ with respect to $f$.

The graph $\Gamma$ is called \emph{prime} if $\Gamma$ is not a 4-cycle and $n(\Gamma,f)=0$ for some embedding $f:\Gamma\to\RR^2$.
\end{defn}

The following lemma helps us understand the structure of prime graphs.

\begin{lem}
\label{le0}
Let $\Gamma$ be a graph satisfying Standing Assumptions. Assume that $\Gamma$ is a prime graph. Then $\Gamma$ is the suspension of 3 distinct points or $\Gamma$ does not contain the suspension of 3 distinct points. In particular, if $\Gamma$ is $\mathcal{CFS}$, then it must be the suspension of 3 distinct points.
\end{lem}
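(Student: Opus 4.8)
The plan is to identify the suspension of three distinct points with the complete bipartite graph $K_{2,3}$ on parts $\{b_1,b_2\}$ and $\{a_1,a_2,a_3\}$: all six cross edges are present, and triangle-freeness guarantees there are no others, so any copy of this suspension inside a triangle-free graph is automatically induced. The key combinatorial feature is that $K_{2,3}$ contains exactly three $4$--cycles, namely $\sigma_{ij}=b_1a_ib_2a_j$ for $\{i,j\}\subset\{1,2,3\}$. The stated dichotomy is equivalent to the single implication ``if $\Gamma$ is prime and contains a copy of $K_{2,3}$, then $\Gamma=K_{2,3}$''; I would prove this, read off the ``otherwise'' clause as its contrapositive, and handle the $\mathcal{CFS}$ assertion at the end.

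So assume $\Gamma$ is prime, fix an embedding $f\colon\Gamma\to\RR^2$ with $n(\Gamma,f)=0$, and suppose $\Gamma$ contains $K_{2,3}$ on the vertices above. Restricting $f$ yields a planar embedding of $K_{2,3}$, and the first step is to pin down its face structure. By Euler's formula together with the fact that $K_{2,3}$ is bipartite of girth $4$, every planar embedding has exactly three faces, each bounded by a $4$--cycle; since the only $4$--cycles are $\sigma_{12},\sigma_{13},\sigma_{23}$, these three cycles are exactly the face boundaries. Moreover each $\sigma_{ij}$ bounds a single face $F_{ij}$: the remaining vertex $a_k$ (with $\{i,j,k\}=\{1,2,3\}$) together with its edges $b_1a_k$ and $b_2a_k$ must lie in one of the two components of $\RR^2-f(\sigma_{ij})$, so that component is non-empty and the other one is the face $F_{ij}$. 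In particular $f(a_k)$ lies strictly in the component of $\RR^2-f(\sigma_{ij})$ opposite to $F_{ij}$.

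The contradiction now follows immediately from primeness. Let $v$ be any vertex of $\Gamma$ outside $K_{2,3}$. Since $f$ is an embedding, $f(v)$ lies in $\RR^2-f(K_{2,3})$ and hence in some face $F_{ij}$; then $f(v)$ and $f(a_k)$ sit in the two distinct components of $\RR^2-f(\sigma_{ij})$, so $f(\Gamma)$ meets both components and $\sigma_{ij}$ strongly separates $\Gamma$ with respect to $f$, contradicting $n(\Gamma,f)=0$. Thus $\Gamma$ has no vertices beyond $\{a_1,a_2,a_3,b_1,b_2\}$, and since any edge among the $a_i$ or the edge $b_1b_2$ would create a triangle, triangle-freeness leaves $\Gamma$ with no edges beyond those of $K_{2,3}$; hence $\Gamma=K_{2,3}$, proving the dichotomy. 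I expect the genuinely technical point to be the embedding-independent justification of the previous paragraph's face structure (this is where planarity and the Jordan curve theorem really enter); once it is in place the strong-separation step is a single line.

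For the final ($\mathcal{CFS}$) assertion, write $\Gamma=\Omega*K$ with $K$ a (possibly empty) clique and $\Omega^4$ possessing a connected component $\mathcal{C}$ whose support is all of $V(\Omega)$. Because $\Gamma$ has at least five vertices while a single $4$--cycle supports only four, $\mathcal{C}$ has at least two vertices and therefore contains an edge of $\Omega^4$; such an edge consists of two distinct $4$--cycles sharing a pair of non-adjacent vertices $\{x,y\}$. Their remaining vertices are all adjacent to both $x$ and $y$ and include at least three distinct vertices, so together with $x$ and $y$ they span a copy of $K_{2,3}$ in $\Gamma$. By the dichotomy just proved, $\Gamma=K_{2,3}$, i.e.\ $\Gamma$ is the suspension of three distinct points.
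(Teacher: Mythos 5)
Your proof is correct and, for the main dichotomy, follows essentially the same strategy as the paper: assume $\Gamma$ is prime and contains a suspension $K_{2,3}$ of three points, then derive a contradiction from any extra vertex by exhibiting one of the three $4$--cycles of $K_{2,3}$ as strongly separating. The difference is in the bookkeeping. The paper normalizes the embedding so that $f(a_2)$ lies in the bounded component of the complement of the cycle on $\{b_1,b_2,a_1,a_3\}$ and runs a two-case analysis on where the extra vertex $d$ lands, asserting without detailed proof that a point inside that disk must lie inside one of the two sub-cycles; your Euler-formula argument (three faces, each bounded by one $\sigma_{ij}$, with $f(a_k)$ on the opposite side of $\sigma_{ij}$ from $F_{ij}$) proves exactly the face decomposition that the paper's case analysis uses implicitly, so your version is if anything the more rigorous packaging of the identical geometric fact. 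You also go beyond the paper on the ``in particular'' clause: the paper's proof stops at the dichotomy and silently uses that a triangle-free $\mathcal{CFS}$ graph on at least five vertices contains a suspension of three points, whereas you derive this from an edge of the component $\mathcal{C}$ of $\Gamma^4$. One small patch is needed there: your count ``$\mathcal{C}$ has at least two vertices because $\Gamma$ has at least five'' presumes $V(\Omega)=V(\Gamma)$, i.e.\ that the clique factor in $\Gamma=\Omega*K$ is empty. This does follow, but only after observing that $\Omega$ contains a $4$--cycle (else $\Omega^4$ has no component with full support) and hence an edge, so a vertex of a nonempty clique factor, being joined to both endpoints of that edge, would create a triangle. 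With that one sentence added, your argument is complete.
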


\begin{proof}
We assume that $\Gamma$ contains subgraph $K$ which is a suspension of three vertices called $a_1$, $a_2$, and $a_3$. 
%Let $\JJ$ be the collection of all 4-cycles of $\Gamma$. Then it is clear that $\JJ$ satisfies all conditions in Theorem (BHS ). Therefore, the right-angled Coxeter group $G_\Gamma$ is hyperbolic relative to $\PP=\set{P_J}{J\in\JJ}$. Also, each subgroup in $\PP$ is virtually $\ZZ^2$. Therefore, the Davis complex $X_\Gamma$ is a $\CAT(0)$ space with isolated flats by Theorem.
Let $b_1$ and $b_2$ be suspension vertices of $K$. We will show that $\Gamma=K$. Let $f:\Gamma\to\RR^2$ be an embedding. Let $C_1$ be the image of the 4-cycle with vertices $b_1$, $b_2$, $a_2$, and $a_3$. Let $C_2$ be the image of the 4-cycle with vertices $b_1$, $b_2$, $a_1$, and $a_3$. Let $C_3$ be the image of the 4-cycle with vertices $b_1$, $b_2$, $a_1$, and $a_2$. We can assume that $f(a_2)$ lies in the bounded component of $\RR^2-C_2$.

Assume for the contradiction that $\Gamma\neq K$. Then there is a vertex $d$ of $\Gamma$ that does not belong to the set $\{b_1,b_2,a_1,a_2,a_3\}$. If $f(d)$ lies in the unbounded component of $\RR^2-C_2$, then $f(\Gamma)$ intersects with both components of $\RR^2-C_2$. Therefore, the 4--cycles with vertices $b_1$, $b_2$, $a_1$, and $a_3$ strongly separates $\Gamma$ which is a contradiction. If $f(d)$ lies in the bounded component of $\RR^2-C_2$, then $f(d)$ lies in the bounded components of $\RR^2-C_1$ or $\RR^2-C_3$ (say $\RR^2-C_1$). Also $f(a_1)$ lies in the unbounded component of $\RR^2-C_1$. Therefore, $f(\Gamma)$ intersects with both components of $\RR^2-C_1$. This implies that the 4--cycles with vertices $b_1$, $b_2$, $a_2$, and $a_3$ strongly separates $\Gamma$ which is a contradiction. Therefore, $\Gamma=K$.
\end{proof}

In the following two lemmas. we discuss some behaviors of $4$--cycles in a strong decomposition of a graph.

\begin{lem}
\label{le1}
Let $\Gamma$ be a graph satisfying Standing Assumptions and $f:\Gamma\to \RR^2$ be an embedding. Assume that $(\Gamma_1,\Gamma_2$) be a strong visual decomposition of $\Gamma$ with respect to $f$ along some 4--cycle $\sigma$. Then for each $i$ the 4-cycle $\sigma$ does not strongly separates any subgraph $K$ of $\Gamma_i$ that contains $\sigma$ with respect to $f_{|K}$. Moreover, if a 4-cycle $\alpha$ in some $\Gamma_i$ that strongly separates $\Gamma_i$ with respect to $f_{|\Gamma_i}$, then $\alpha$ also strongly separates $\Gamma$ with respect to $f$.
\end{lem}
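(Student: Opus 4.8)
The plan is to reduce both assertions to one topological fact: the image $f(\Gamma_i)$ of each piece of the decomposition is confined to the closure of a single complementary region of the Jordan curve $f(\sigma)$. Write $U_1,U_2$ for the two components of $\RR^2-f(\sigma)$; there are exactly two because $f(\sigma)$ is a simple closed curve (the Jordan Curve Theorem), and by the construction of the strong visual decomposition, $\Gamma_i$ consists of $\sigma$ together with those components of $\Gamma-\sigma$ whose images lie in $U_i$. The central claim I would establish is
\[
f(\Gamma_i)\subseteq f(\sigma)\cup U_i .
\]

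First I would prove this confinement claim edge by edge. Since $f$ is an embedding of the whole graph $\Gamma$ and the edges of $\sigma$ are edges of $\Gamma$, any edge $e$ of $\Gamma$ meets $f(\sigma)$ only at endpoints of $e$ that happen to be vertices of $\sigma$; hence the open interior of $f(e)$ is disjoint from $f(\sigma)$, and being connected it lies entirely in one of $U_1$ or $U_2$. Now take an edge $e$ of $\Gamma_i$. If both endpoints of $e$ lie in $\sigma$, then because $\Gamma$ is triangle-free the $4$--cycle $\sigma$ is chordless, so $e$ is an edge of $\sigma$ and $f(e)\subseteq f(\sigma)$. Otherwise $e$ has an endpoint $v\in\Gamma_i-\sigma$, which by construction satisfies $f(v)\in U_i$; since $U_i$ is open, interior points of $f(e)$ near $f(v)$ lie in $U_i$, forcing the whole interior into $U_i$, so $f(e)\subseteq f(\sigma)\cup U_i$. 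As $f(\Gamma_i)$ is the union of the images of its vertices and edges, the confinement claim follows.

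The first statement is then immediate. If $K\subseteq\Gamma_i$ is any subgraph containing $\sigma$, then $f(K)\subseteq f(\Gamma_i)\subseteq f(\sigma)\cup U_i$, so $f(K)$ is disjoint from $U_j$ for $j\neq i$; since strong separation of $K$ by $\sigma$ would require $f(K)$ to meet both $U_1$ and $U_2$, this cannot happen, and $\sigma$ does not strongly separate $K$ with respect to $f_{|K}$. For the moreover part, the argument is pure monotonicity in the reverse direction: if a $4$--cycle $\alpha$ of $\Gamma_i$ strongly separates $\Gamma_i$, then $f(\Gamma_i)$ meets both components of $\RR^2-f(\alpha)$; as $\Gamma_i$ is a subgraph of $\Gamma$ we have $f(\Gamma_i)\subseteq f(\Gamma)$, so $f(\Gamma)$ meets both components of $\RR^2-f(\alpha)$ as well, and $\alpha$ strongly separates $\Gamma$ with respect to $f$.

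The genuinely topological content, and the only real obstacle, is the confinement claim $f(\Gamma_i)\subseteq f(\sigma)\cup U_i$; everything else is formal monotonicity of the strong-separation condition under passing to sub- or super-graphs. I would take particular care with the step asserting that the interior of each edge lies in a single region $U_k$, since that is precisely where planarity of $f$ (edges meet only at shared endpoints), the triangle-free hypothesis (chordlessness of $\sigma$), and the Jordan Curve Theorem are jointly used. Once phrased rigorously, the two conclusions of the lemma drop out directly.
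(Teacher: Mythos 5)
Your proof is correct and follows essentially the same route as the paper's: establish that $f(\Gamma_i)$ is confined to $U_i\cup f(\sigma)$, deduce the first claim since $f(K)$ then misses the other component, and get the moreover part from the monotonicity $f(\Gamma_i)\subseteq f(\Gamma)$. The only difference is that you prove the confinement claim explicitly (edge by edge, using the embedding and chordlessness of $\sigma$), whereas the paper takes it as immediate from the definition of strong visual decomposition via its labeling convention --- a worthwhile piece of added rigor, but not a different argument.
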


\begin{proof}
Let $V_{b}$ and $V_{u}$ be the two components of $\R^{2}-f(\sigma)$. By labeling, we assume that $f(\Gamma_1) \subset V_{b} \cup f(\sigma)$ and $f(\Gamma_2) \subset V_{u} \cup f(\sigma)$.
Let $K$ be any subgraph of $\Gamma_{i}$ such that $K$ contains $\sigma$. We will show that $\sigma$ does not strongly separate $K$ with respect to $f_{|K}$. Without losing generality, we can assume that $i=1$ (the case $i$=2 is similar). It follows that $f(K) \subset f(\Gamma_1)$. We now show that $f(K) \cap V_{u} = \emptyset$. In deed, we know that $f(\Gamma_{2}) - f(\sigma) \subset V_{u}$ and $f(\sigma) = f(\Gamma_1) \cap f(\Gamma_2)$. It follows that $f(K) \cap \bigl (f(\Gamma_{2}) - f(\sigma) \bigr ) \subset f(\Gamma_1) \cap \bigl (f(\Gamma_{2}) - f(\sigma) \bigr ) = \emptyset$, thus $f(K) \cap V_{u} = \emptyset$ because $f(K) \cap V_{u} = f(K) \cap \bigl (f(\Gamma_{2}) - f(\sigma) \bigr )$. 

We are now going to prove that if $\alpha$ is a $4$--cycle in some $\Gamma_{i}$ which is strongly separates $\Gamma_i$ with respect to $f_{|\Gamma_i}$, then $\alpha$ also strongly separates $\Gamma$ with respect to $f$. Let $U_{u}$ and $U_{b}$ be two components of $\R^{2} - f(\alpha)$. Since $\alpha$ is strongly separating $\Gamma_{i}$ with respect to $f_{|\Gamma_i}$, we have $f(\Gamma_{i}) \cap U_{b}$ and $f(\Gamma_{i}) \cap U_{u}$ are non-empty set. Of course, it implies that $f(\Gamma) \cap U_{b}$ and $f(\Gamma) \cap U_{u}$ are non-empty set as well, thus $\alpha$ is strongly separating $\Gamma$ with respect to $f$.
\end{proof}

\begin{lem}
\label{le2}
Let $\Gamma$ be a graph satisfying Standing Assumptions and $f:\Gamma\to \RR^2$ be an embedding. Assume that $(\Gamma_1,\Gamma_2$) be a strong visual decomposition of $\Gamma$ with respect to $f$ along some 4--cycle $\sigma$. If $\alpha$ is a 4-cycle that does not strongly separates $\Gamma$ with respect to $f$, then $\alpha$ is contained in $\Gamma_1$ or $\Gamma_2$.
\end{lem}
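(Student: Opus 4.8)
The plan is to argue by contraposition: assume the $4$--cycle $\alpha$ is contained in neither $\Gamma_1$ nor $\Gamma_2$, and show that $\alpha$ must then strongly separate $\Gamma$ with respect to $f$. Write $U_1,U_2$ for the two components of $\R^2-f(\sigma)$, labelled so that $f(\Gamma_i)\subset U_i\cup f(\sigma)$; thus each $\Gamma_i-\sigma$ is mapped into $U_i$. Since $\alpha\not\subset\Gamma_1$ and $\alpha\not\subset\Gamma_2$, the cycle $\alpha$ has at least one vertex $p$ with $f(p)\in U_1$ and at least one vertex $q$ with $f(q)\in U_2$.

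The first step is a planarity observation. Because $f$ is an embedding, any edge of $\Gamma$ whose image joins a point of $U_1$ to a point of $U_2$ would have to cross the Jordan curve $f(\sigma)$; as edges of an embedded graph meet only at common endpoints and the endpoints of such an edge lie off $\sigma$, this is impossible. Hence no edge of $\alpha$ can join a vertex in $\Gamma_1-\sigma$ to a vertex in $\Gamma_2-\sigma$. Recording for each of the four vertices of $\alpha$ whether it lies in $U_1$, in $U_2$, or on $\sigma$, a direct check of the possible distributions (using that $\alpha$ has no edge joining $U_1$ to $U_2$) shows the only possibility is that $\alpha$ has exactly one vertex $p$ in $U_1$, exactly one vertex $q$ in $U_2$, and two vertices $s_1,s_2$ of $\sigma$, arranged cyclically as $p,s_1,q,s_2$. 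In particular $p$ and $q$ are each adjacent to both $s_1$ and $s_2$; since $\Gamma$ is triangle-free, $s_1$ and $s_2$ cannot be adjacent, so they are the two opposite vertices of the $4$--cycle $\sigma$. Let $t_1,t_2$ denote the remaining two (opposite) vertices of $\sigma$.

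The second step is topological. The two edges $s_1 p$ and $p s_2$ of $\alpha$ meet $f(\sigma)$ only at $f(s_1),f(s_2)$, so the arc $A_1=f(s_1 p s_2)$ lies in $\overline{U_1}$ with interior in $U_1$; symmetrically $A_2=f(s_1 q s_2)\subset\overline{U_2}$. Thus $f(\alpha)=A_1\cup A_2$ is a Jordan curve meeting $f(\sigma)$ exactly at $f(s_1),f(s_2)$. Examining the cyclic order of the four edges of $f(\sigma)$ and $f(\alpha)$ around $f(s_1)$ shows that the two $\sigma$--edges and the two $\alpha$--edges alternate: the edge to $p$ enters the $U_1$--sector and the edge to $q$ enters the $U_2$--sector determined by the two $\sigma$--edges. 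Hence $f(\sigma)$ crosses $f(\alpha)$ transversally at $f(s_1)$, and likewise at $f(s_2)$. Consequently, as one traverses $f(\sigma)$ one switches sides of the Jordan curve $f(\alpha)$ exactly at $f(s_1)$ and $f(s_2)$, so the open arc of $f(\sigma)$ through $t_1$ lies entirely in one component of $\R^2-f(\alpha)$ while the open arc through $t_2$ lies in the other. Therefore $f(t_1)$ and $f(t_2)$ lie in different components of $\R^2-f(\alpha)$, and since both are points of $f(\Gamma)$, the cycle $\alpha$ strongly separates $\Gamma$, contradicting the hypothesis and completing the argument.

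I expect the main obstacle to be this last topological step: converting the alternation of edges around $f(s_1)$ and $f(s_2)$ into the clean conclusion that $t_1$ and $t_2$ are separated by $f(\alpha)$. This requires care with the Jordan curve theorem and with the claim that each open arc of $f(\sigma)$ remains inside a single complementary component of $f(\alpha)$; the combinatorial reduction to the configuration $p,s_1,q,s_2$ carried out beforehand is precisely what makes this separation statement tractable.
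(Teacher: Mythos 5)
Your proposal is correct and follows essentially the same route as the paper: both arguments reduce to the configuration in which $\alpha$ and $\sigma$ share a non-adjacent (opposite) pair of vertices while the remaining two vertices of $\alpha$ lie on opposite sides of $f(\sigma)$, and then conclude via the Jordan curve theorem that the remaining opposite pair of $\sigma$ is separated by $f(\alpha)$, contradicting that $\alpha$ is not strongly separating. The only difference is one of detail, not of method: you spell out the vertex-distribution case analysis and the alternation/transversality argument at $f(s_1)$ and $f(s_2)$, which the paper compresses into the single sentence asserting that $f(a_1)$ and $f(a_2)$ lie in different components of $\RR^2-f(\alpha)$.
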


\begin{proof}
If $\alpha\cap\sigma$ does not contain two non-adjacent vertices, then $\alpha$ is contained in $\Gamma_1$ or $\Gamma_2$ clearly. We now assume that $\alpha\cap\sigma$ contains two non-adjacent vertices. Let $(a_1, a_2)$ and $(b,c)$ be two pairs of non-adjacent vertices of $\sigma$. Let $(a_3, a_4)$ and $(b,c)$ be two pairs of non-adjacent vertices of $\alpha$. Assume for the contradiction that $\alpha$ is not contained in $\Gamma_1$ or $\Gamma_2$. Then $f(a_3)$ and $f(a_4)$ lie in different components of $\RR^2-f(\sigma)$. Therefore, $f(a_1)$ and $f(a_2)$ lie in different components of $\RR^2-f(\alpha)$. This implies that $\alpha$ strongly separates $\Gamma$ with respect to $f$ which is a contradiction. Therefore, $\alpha$ is contained in $\Gamma_1$ or $\Gamma_2$. 
\end{proof}

The following lemma is a key step to decompose a graph satisfying Standing Assumptions into a tree of subgraphs.

\begin{lem}
\label{le3}
Let $\Gamma$ be a graph satisfying Standing Assumptions and $f:\Gamma\to \RR^2$ be an embedding. Assume there is a finite tree $T$ that encodes the structure of $\Gamma$ as follows:
\begin{enumerate}
\item Each vertex $v$ of $T$ is associated to an induced connected subgraph $\Gamma_v$ of $\Gamma$ that satisfies Standing Assumptions. Moreover, $\Gamma_v\neq\Gamma_{v'}$ if $v\neq v'$ and $\bigcup_{v\in V(T)}^{}\Gamma_v=\Gamma$.
\item Each edge $e$ of $T$ is associated to a 4--cycle $\Gamma_e$ of $\Gamma$. Moreover, $\Gamma_e\neq\Gamma_{e'}$ if $e\neq e'$.
\item Two vertices $v_1$ and $v_2$ of $T$ are endpoints of the same edge $e$ if and only if $\Gamma_{v_1}\cap\Gamma_{v_2}=\Gamma_e$. Moreover, if $V_1$ and $V_2$ are vertex sets of two components of $T$ removed the midpoint of $e$, then $(\bigcup_{v\in V_1}\Gamma_v,\bigcup_{v\in V_2}\Gamma_v)$ is a strong visual decomposition of $\Gamma$ along $\Gamma_e$ with respect to $f$.
\item The number $m=\max_{v\in V(T)}(n(\Gamma_v,f_{|\Gamma_v})$ is positive.
\end{enumerate} 

Then there is another tree $\bar{T}$ that encodes the structure of $\Gamma$ as in Conditions (1), (2), and (3) as above and $n(\Gamma_v,f_{|\Gamma_v})\leq m-1$ for each vertex $v$ of $\bar{T}$. Moreover, if subgraph $\Gamma_v$ is $\mathcal{CFS}$ for each vertex $v$ of $T$, then subgraph $\Gamma_w$ is also $\mathcal{CFS}$ for each vertex $w$ of $\bar{T}$.
\end{lem}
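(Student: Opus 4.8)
The plan is to reduce the maximum $m$ by one through a single ``splitting'' operation carried out at every vertex of $T$ that realizes the maximum. First I would fix a vertex $v$ of $T$ with $n(\Gamma_v, f_{|\Gamma_v}) = m$. Since $m>0$, there is a $4$--cycle $\sigma$ of $\Gamma_v$ that strongly separates $\Gamma_v$ with respect to $f_{|\Gamma_v}$; let $(\Gamma_v^1,\Gamma_v^2)$ be the resulting strong visual decomposition of $\Gamma_v$ along $\sigma$. I would then modify $T$ locally: replace the vertex $v$ by two vertices $v_1,v_2$ carrying $\Gamma_v^1,\Gamma_v^2$, join them by a new edge $e_0$ carrying $\sigma$, and redistribute the old edges at $v$. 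For an edge $e_j$ at $v$ with associated $4$--cycle $\Gamma_{e_j}$, condition (3) of $T$ places $\Gamma_v$ inside one side of the strong visual decomposition of $\Gamma$ along $\Gamma_{e_j}$, so Lemma~\ref{le1} (with $K=\Gamma_v$) shows $\Gamma_{e_j}$ does not strongly separate $\Gamma_v$; hence by Lemma~\ref{le2} it lies in exactly one of $\Gamma_v^1,\Gamma_v^2$, and I reattach $e_j$ to the corresponding $v_i$. Performing this at each maximal vertex produces the tree $\bar{T}$.

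Next I would verify that the count drops. By Lemma~\ref{le1}, every $4$--cycle that strongly separates $\Gamma_v^i$ also strongly separates $\Gamma_v$, while the same lemma guarantees that $\sigma$ does \emph{not} strongly separate $\Gamma_v^i$. Thus the strongly separating $4$--cycles of $\Gamma_v^i$ inject into those of $\Gamma_v$ and miss $\sigma$, giving $n(\Gamma_v^i, f_{|\Gamma_v^i})\le m-1$. Every vertex left unsplit already had $n<m$, so all vertices of $\bar{T}$ satisfy the bound. The $\mathcal{CFS}$ clause is then immediate from Lemma~\ref{leb}: if $\Gamma_v$ is $\mathcal{CFS}$ so are $\Gamma_v^1,\Gamma_v^2$, and the other vertex graphs are untouched.

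I would then check conditions (1)--(3) for $\bar{T}$. Condition (1) for the new vertices is Lemma~\ref{leb} together with connectedness and inducedness of the pieces of a strong visual decomposition; the union of all vertex graphs is unchanged because $\Gamma_v^1\cup\Gamma_v^2=\Gamma_v$, and distinctness of the vertex graphs, as well as condition (2), follows from the intersection pattern of the tree decomposition, using that $\sigma$ strongly separates $\Gamma_v$ while no edge $4$--cycle at $v$ does (so $\sigma$ differs from every edge graph). For the old edges, condition (3) is inherited: deleting the midpoint of $e_j$ in $\bar{T}$ partitions the vertex graphs into the same two unions as in $T$ (again since $\Gamma_v^1\cup\Gamma_v^2=\Gamma_v$), and $\Gamma_{v_i}\cap\Gamma_{v_j'}=\Gamma_{e_j}$ because $\Gamma_{v_i}\subseteq\Gamma_v$ and $\Gamma_v\cap\Gamma_{v_j'}=\Gamma_{e_j}$.

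The heart of the argument, and the step I expect to be the main obstacle, is condition (3) for the new edge $e_0$. I would first note that $\sigma$ strongly separates $\Gamma$ itself, since $f(\Gamma_v)\subseteq f(\Gamma)$ already meets both components $V_b,V_u$ of $\RR^2-f(\sigma)$. Let $(\Gamma^A,\Gamma^B)$ be the strong visual decomposition of $\Gamma$ along $\sigma$ with $\Gamma^A$ on the $V_b$ side, so that $\Gamma^A\cap\Gamma_v=\Gamma_v^1$. It then remains to prove that the union of the vertex graphs on the $v_1$--side of $e_0$ in $\bar{T}$ equals $\Gamma^A$. The real content is that each subtree--subgraph $\Gamma^{(j)}$ hanging off an edge $e_j$ assigned to $v_1$ lies entirely in $\overline{V_b}$: since $\Gamma^{(j)}$ is glued to the rest of $\Gamma$ only along $\Gamma_{e_j}\subseteq\Gamma_v^1$, the strong visual decomposition of $\Gamma$ along $\Gamma_{e_j}$ puts $f(\Gamma^{(j)})$ in the closure of one component $W$ of $\RR^2-f(\Gamma_{e_j})$; as $f(\sigma)$ lies in the closure of the complementary component, $W$ is disjoint from $f(\sigma)$ and hence contained in a single component of $\RR^2-f(\sigma)$, which must be $V_b$ because $\Gamma_{e_j}$ has a vertex mapped into $V_b$. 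This forces $\Gamma^{(j)}\subseteq\Gamma^A$, and a complementary vertex count yields the equality of unions, so $(\Gamma^A,\Gamma^B)$ is exactly the decomposition induced by $e_0$. Getting this Jordan--curve bookkeeping right, tracking which component each hanging piece falls into and confirming the pieces recombine to the global decomposition along $\sigma$, is the delicate part of the proof.
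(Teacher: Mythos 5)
Your proposal is correct and follows essentially the same route as the paper's proof: split a vertex realizing the maximum along a strongly separating $4$--cycle, redistribute the incident edges using Lemmas~\ref{le1} and~\ref{le2}, obtain the count drop from Lemma~\ref{le1} and the $\mathcal{CFS}$ clause from Lemma~\ref{leb}, and repeat at each maximal vertex. The only difference is that you spell out the verification of Condition~(3) for the new edge via the Jordan-curve bookkeeping, a step the paper dispenses with as ``it is not hard to see.''
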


\begin{proof}
Let $v_0$ be an arbitrary vertex of $T$ such that $m=n(\Gamma_{v_0},f_{|\Gamma_{v_0}})$. Since $n(\Gamma_{v_0},f_{|\Gamma_{v_0}})>0$, the graph $\Gamma_{v_0}$ has a 4-cycle $\sigma$ that strongly separates $\Gamma_{v_0}$ with respect to $f_{|\Gamma_{v_0}}$. Let $(\Gamma_1,\Gamma_2)$ be a strong visual decomposition of $\Gamma_{v_0}$ along $\sigma$ with respect to $f_{|\Gamma_{v_0}}$. Let $e$ be an arbitrary edge of $T$ that contains $v_0$ as an endpoints. Then the 4-cycle $\Gamma_e$ does not strongly separates $\Gamma_{v_0}$ with respect to $f_{|\Gamma_{v_0}}$ by Lemma \ref{le1}. Therefore, the 4-cycle $\Gamma_e$ is contained in $\Gamma_1$ or $\Gamma_2$ by Lemma \ref{le2}. Thus, we can modify the tree $T$ to obtain another tree $T'$ as follows. 

We first replace the vertex $v_0$ of $T$ by an edge $e_0$ with two endpoints $v_1$ and $v_2$. We associate the new edge $e_0$ to the 4--cycle $\Gamma_{e_0}=\sigma$. We associate the new vertex $v_1$ the graph $\Gamma_{v_1}=\Gamma_1$ and each edge $e$ of $T$ satisfying $\Gamma_e\subset \Gamma_1$ is attached to $v_1$ in the new tree $\bar{T}$. Similarly, we associate the new vertex $v_2$ the graph $\Gamma_{v_2}=\Gamma_2$ and each edge $e$ of $T$ satisfying $\Gamma_e\subset \Gamma_2$ is attached to $v_2$ in the new tree $\bar{T}$. It is not hard to see the new tree $\bar{T}$ encodes the structure of the graph $\Gamma$ carrying Conditions (1), (2), and (3) in the lemma. Moreover, the numbers $n(\Gamma_{v_1},f_{|\Gamma_{v_1}})$ and $n(\Gamma_{v_2},f_{|\Gamma_{v_2}})$ is less than or equal to $m-1$ by Lemma \ref{le1} and the number $n(\Gamma_{v},f_{|\Gamma_{v}})$ does not change for other vertices. Also the new vertex graphs $\Gamma_{v_1}$ and $\Gamma_{v_2}$ also satisfy Standing Assumptions by Lemma \ref{leb}. Also by this lemma, two new vertex graphs $\Gamma_{v_1}$ and $\Gamma_{v_2}$ are $\mathcal{CFS}$ if $\Gamma_{v_0}$ is $\mathcal{CFS}$. Repeating this process to any vertex $v$ satisfying $n(\Gamma_{v},f_{|\Gamma_{v}})=m$, we can obtained the desired tree $\bar{T}$. Moreover, if subgraph $\Gamma_v$ is $\mathcal{CFS}$ for each vertex $v$ of $T$, then subgraph $\Gamma_w$ is also $\mathcal{CFS}$ for each vertex $w$ of $\bar{T}$.
\end{proof}

The following proposition is a direct result of Lemma \ref{le3}.

\begin{prop}
\label{prop:keyidea1}
Let $\Gamma$ be a graph satisfying Standing Assumptions and $f:\Gamma\to \RR^2$ be an embedding. Then there is a finite tree $T$ that encodes the structure of $\Gamma$ as follows:
\begin{enumerate}
\item Each vertex $v$ of $T$ is associated to an induced prime subgraph $\Gamma_v$ of $\Gamma$. Moreover, $\Gamma_v\neq\Gamma_{v'}$ if $v\neq v'$ and $\bigcup_{v\in V(T)}^{}\Gamma_v=\Gamma$.
\item Each edge $e$ of $T$ is associated to a 4--cycle $\Gamma_e$ of $\Gamma$. Moreover, $\Gamma_e\neq\Gamma_{e'}$ if $e\neq e'$.
\item Two vertices $v_1$ and $v_2$ of $T$ are endpoints of the same edge $e$ if and only if $\Gamma_{v_1}\cap\Gamma_{v_2}=\Gamma_e$. Moreover, if $V_1$ and $V_2$ are vertex sets of two components of $T$ removed the midpoint of $e$, then $(\bigcup_{v\in V_1}\Gamma_v,\bigcup_{v\in V_2}\Gamma_v)$ is a strong visual decomposition of $\Gamma$ along $\Gamma_e$ with respect to $f$.
\end{enumerate}
Moreover, if the graph $\Gamma$ is $\mathcal{CFS}$, then subgraph $\Gamma_v$ is also $\mathcal{CFS}$ for each vertex $v$ of $T$ (therefore, $\Gamma_v$ is a suspension of exactly three points by Lemma \ref{le0}).
\end{prop}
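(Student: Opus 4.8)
The plan is to iterate Lemma~\ref{le3}, starting from the trivial one-vertex tree, and run a downward induction on the integer $m=\max_{v\in V(T)} n(\Gamma_v,f_{|\Gamma_v})$ until it reaches zero.

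First I would set up the base of the iteration. Let $T_0$ be the tree consisting of a single vertex $v$ with no edges, and associate to it the graph $\Gamma_v=\Gamma$. Since $\Gamma$ satisfies Standing Assumptions, Condition~(1) of Lemma~\ref{le3} holds for $T_0$, while Conditions~(2) and~(3) are vacuously true because $T_0$ has no edges. If it happens that $n(\Gamma,f)=0$, then $\Gamma$ is not a $4$--cycle, since it has at least five vertices by Standing Assumptions, and so $\Gamma$ is prime by definition; in this case $T=T_0$ already satisfies the conclusion.

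Next, if $m=n(\Gamma,f)>0$, I would apply Lemma~\ref{le3} repeatedly. Each application takes a tree satisfying Conditions~(1)--(3) together with Condition~(4) (that is, $m>0$) and produces a new tree satisfying Conditions~(1)--(3) in which the maximum value $\max_{v} n(\Gamma_v,f_{|\Gamma_v})$ has dropped by at least one, while the vertex graphs continue to satisfy Standing Assumptions. Because this maximum is a non-negative integer bounded above by its initial value $n(\Gamma,f)$, the process terminates after finitely many steps in a tree $T$ with $n(\Gamma_v,f_{|\Gamma_v})=0$ for every vertex $v$.

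Finally I would read off primeness and the $\mathcal{CFS}$ refinement. In the terminal tree $T$, each vertex graph $\Gamma_v$ still satisfies Standing Assumptions, hence has at least five vertices and is therefore not a $4$--cycle; combined with $n(\Gamma_v,f_{|\Gamma_v})=0$, this shows each $\Gamma_v$ is prime. For the last assertion, note that Lemma~\ref{le3} preserves the $\mathcal{CFS}$ property of the vertex graphs at every step, so if $\Gamma$ is $\mathcal{CFS}$ then each terminal $\Gamma_v$ is a prime $\mathcal{CFS}$ graph, and Lemma~\ref{le0} then identifies $\Gamma_v$ as a suspension of exactly three points. There is no serious obstacle here, as the statement is genuinely a clean termination argument built on Lemma~\ref{le3}; the only point requiring care is the bookkeeping that Conditions~(1)--(3) and the Standing Assumptions (and, in the $\mathcal{CFS}$ case, the $\mathcal{CFS}$ property) are maintained across all iterations, which is exactly what Lemma~\ref{le3} and Lemma~\ref{leb} guarantee.
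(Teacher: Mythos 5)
Your proposal is correct and is precisely the argument the paper intends: the paper states that Proposition~\ref{prop:keyidea1} is a direct result of Lemma~\ref{le3}, and your downward induction on $m=\max_{v}n(\Gamma_v,f_{|\Gamma_v})$, starting from the one-vertex tree and invoking Lemmas~\ref{le3}, \ref{leb}, and~\ref{le0}, fills in exactly the termination and bookkeeping details the authors leave implicit. The one point you handle that deserves the care you give it --- verifying that each terminal vertex graph is not a $4$--cycle because Standing Assumptions force at least five vertices --- is indeed needed for primeness, and you address it correctly.
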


Using the ``tree structure'' on a defining graph $\Gamma$ as in Proposition \ref{prop:keyidea1} can help us understand the structure of the corresponding right-angled Coxeter group $G_\Gamma$.

\begin{cor}
\label{nicecor}
Let $\Gamma$ be a graph satisfying Standing Assumptions. Then the right-angled Coxeter group $G_\Gamma$ is a tree of groups that satisfies the following conditions:
\begin{enumerate}
\item Each vertex group $T_v$ is $G_C$ where $C$ is the suspension of three distinct points or $T_v$ is a relatively hyperbolic group with respect to a collection of $D_\infty\times D_\infty$ subgroups of $T_v$.
\item Each edge group is $D_\infty\times D_\infty$.
\end{enumerate}
Moreover, all vertex groups are isomorphic to a right-angled Coxeter group of the suspension of three distinct points if and only if $\Gamma$ is $\mathcal{CFS}$.
\end{cor}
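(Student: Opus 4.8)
The plan is to transfer the tree decomposition of $\Gamma$ furnished by Proposition~\ref{prop:keyidea1} into a graph-of-groups decomposition of $G_\Gamma$ and then read off the vertex and edge groups. First I would fix, via Proposition~\ref{prop:keyidea1}, a finite tree $T$ whose vertices carry induced prime subgraphs $\Gamma_v$ and whose edges carry $4$--cycles $\Gamma_e$, so that deleting the midpoint of any edge $e$ displays a strong visual decomposition $\Gamma=\Gamma'\cup_{\Gamma_e}\Gamma''$. The key observation is that this is a genuine visual splitting: since $\Gamma'-\Gamma_e$ and $\Gamma''-\Gamma_e$ map into the two components of $\RR^2-f(\Gamma_e)$, planarity forbids any edge between them, so $\Gamma=\Gamma'\cup\Gamma''$ with $\Gamma'\cap\Gamma''=\Gamma_e$ and no cross edges. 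The standard amalgam decomposition of right-angled Coxeter groups then gives $G_\Gamma=G_{\Gamma'}*_{G_{\Gamma_e}}G_{\Gamma''}$, and iterating over the edges of $T$ exhibits $G_\Gamma$ as the fundamental group of a graph of groups over $T$ with vertex groups $G_{\Gamma_v}$ and edge groups $G_{\Gamma_e}$. Since each $\Gamma_e$ is a $4$--cycle, i.e. the join of its two diagonal pairs of non-adjacent vertices, we have $G_{\Gamma_e}\cong D_\infty\times D_\infty$; this establishes condition~(2).

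To identify the vertex groups I would invoke Lemma~\ref{le0}: each prime $\Gamma_v$ is either the suspension of three distinct points, whence $G_{\Gamma_v}=G_C$, or contains no suspension of three distinct points. In the second case I would show $G_{\Gamma_v}$ is hyperbolic relative to the finite collection $\PP=\set{G_J}{J\in\JJ}$, where $\JJ$ is the set of all induced $4$--cycles of $\Gamma_v$, by checking the three conditions of Theorem~\ref{th1}. Condition~(1) is immediate. Conditions~(2) and~(3) both reduce to one mechanism: in a triangle-free graph the common neighbors of two non-adjacent vertices form an independent set, so two non-adjacent vertices with at least three common neighbors would span a suspension of three points. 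For condition~(3), a vertex $s\notin J$ commuting with two non-adjacent vertices $a,c$ of $J=abcd$ would give $a,c$ the three common neighbors $b,d,s$; for condition~(2), two distinct induced $4$--cycles sharing a non-adjacent pair $x,y$ would give $x,y$ at least three common neighbors. Either conclusion contradicts the hypothesis, so condition~(3) holds and any two distinct induced $4$--cycles meet in a complete (hence empty, a vertex, or an edge) subgraph. This yields condition~(1) of the corollary.

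For the final clause, note that $G_{\Gamma_v}\cong G_C$ exactly when $\Gamma_v$ is a suspension of three points, which for a prime graph is equivalent to $\Gamma_v$ being $\mathcal{CFS}$. Hence if $\Gamma$ is $\mathcal{CFS}$ then every $\Gamma_v$ is $\mathcal{CFS}$ by Proposition~\ref{prop:keyidea1}, so all vertex groups are $G_C$. Conversely I would argue directly in $\Gamma^4$: when every $\Gamma_v$ is a suspension of three points, $\Gamma_v^4$ is connected (its three $4$--cycles share the suspension pair), and for adjacent $v,w$ in $T$ the edge $4$--cycle $\Gamma_e$ is a common vertex of $\Gamma_v^4$ and $\Gamma_w^4$. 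Since each $\Gamma_v^4$ embeds in $\Gamma^4$ and $T$ is a tree, $\bigcup_v\Gamma_v^4$ is a connected subgraph of $\Gamma^4$ with support $\bigcup_v V(\Gamma_v)=V(\Gamma)$; as $\Gamma$ has no universal vertex (being triangle-free, with no separating vertex, and with at least five vertices), the clique factor in its $\mathcal{CFS}$ decomposition is empty, so $\Gamma^4$ having a full-support component gives that $\Gamma$ is $\mathcal{CFS}$.

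I expect the heart of the argument to be the verification of conditions~(2) and~(3) of Theorem~\ref{th1} for the non-$\mathcal{CFS}$ prime vertex graphs: this is exactly where the dichotomy of Lemma~\ref{le0} must be used in an essential way, through the triangle-free ``common neighbors are independent'' extraction of a forbidden suspension of three points.
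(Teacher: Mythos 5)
Your proposal is correct and takes essentially the same route as the paper: transfer the tree decomposition of Proposition~\ref{prop:keyidea1} into a tree-of-groups decomposition of $G_\Gamma$ with edge groups $G_{\Gamma_e}\cong D_\infty\times D_\infty$, and for the prime vertex graphs that are not suspensions of three points, apply Lemma~\ref{le0} and verify the three conditions of Theorem~\ref{th1} with the collection of all induced $4$--cycles as peripherals. If anything, your write-up is more complete than the paper's, which asserts the intersection and link conditions without spelling out the triangle-free ``common neighbors are independent'' argument and leaves the converse direction of the final ``Moreover'' clause implicit in Proposition~\ref{prop:keyidea1}.
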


\begin{proof}
We decompose the defining graph $\Gamma$ as a tree $T$ of subgraphs as in Proposition~\ref{prop:keyidea1}. This decomposition induces the corresponding decomposition of $G_\Gamma$ as a tree of groups. Since each edge graph in Proposition~\ref{prop:keyidea1} is an induced $4$--cycle, each edge group in the corresponding decomposition of $G_\Gamma$ is $D_\infty\times D_\infty$ which proves the Statement (2). We now prove the Statement (1).

Let $v$ be an arbitrary vertex of $T$ such that the corresponding vertex graph $\Gamma_v$ is not a suspension of three points. Therefore $\Gamma_v$ does not contain any suspension of three points by Lemma~\ref{le0}. Let $\JJ_v$ be the collection of all $4$--cycles in $\Gamma_v$. Then $\JJ_v$ satisfies Condition (1) in Theorem~\ref{th1} clearly. Since $\Gamma_v$ does not contains suspension of three points, the intersection of two $4$--cycles in $\Gamma_v$ is either empty or a point. Moreover, if a vertex $u$ of $\Gamma_v$ is adjacent to a $4$--cycle $\sigma$ of $\Gamma_v$, then $u$ must be a vertex of $\sigma$. Therefore, $\JJ_v$ satisfies Conditions (2) and (3) in Theorem~\ref{th1}. This implies that the corresponding subgroup $T_v=G_{\Gamma_v}$ is a relatively hyperbolic group with respect to a collection of $D_\infty\times D_\infty$ subgroups of $T_v$.    
\end{proof}

In the rest of this section, we will assume that the ambient graph $\Gamma$ is $\mathcal{CFS}$. Therefore, it is shown in Proposition~\ref{prop:keyidea1} that each vertex subgraph $\Gamma_v$ is a suspension of exactly three points. For our purpose of obtaining a quasi-isometric classification of right-angled Coxeter groups with $\mathcal{CFS}$ graph, the tree structure $T$ in Proposition~\ref{prop:keyidea1} is not a right one to look at. We now modify the tree $T$ to obtain a two-colored new tree that encodes structure of $\Gamma$ by doing the following construction. We refer the reader to Example~\ref{niceexample} for some explicit constructions.

\begin{cons}
\label{cons:keyidea2}
Step 1: We color an edge of $T$ by two colors: red and blue as the following. Let $e$ be an edge of $\Gamma$ with two vertices $v_1$ and $v_2$. If $\Gamma_{v_1}$ and $\Gamma_{v_2}$ have the same suspension points, then we color the edge $e$ by the red. Otherwise, we color $e$ by the blue.

Step 2: Let $\mathcal{R}$ be the union of all red edges of $T$. We remark that $\mathcal{R}$ is not necessarily connected. We form a new tree $T_{r}$ from the tree $T$ by collapsing each component $C$ of $\mathcal{R}$ to a vertex labelled by $v_C$ and we associate each such new vertex $v_C$ to the graph $\Gamma_{v_C}=\bigcup_{v\in V(C)} \Gamma_v$. For each vertex $v$ of $T_r$ which is also a vertex of $T$ we still assign $v$ the graph $\Gamma_v$ as in the previous tree $T$ structure. It is clear that for each vertex $v$ in the new tree $T_r$ vertex graph $\Gamma_{v}$ is also suspension of a vertex set called $A_{v}$. However, the number of elements in $A_v$ may be greater than three and we call this number the weight of $v$ denoted by $w(v)$. It is also clear that the new tree $T_r$ encodes the structure of $\Gamma$ carrying Conditions (1), (2), and (3) of Lemma \ref{le3}. Moreover, if $v_1$ and $v_2$ are two adjacent vertices in $T_r$, then suspension vertices of $\Gamma_{v_1}$ are elements in $A_{v_2}$ and similarly suspension vertices of $\Gamma_{v_2}$ are elements in $A_{v_1}$.

Step 3: We now choose an appropriate cyclic ordering on the set $A_v$ for vertex $v$ of $T_r$. Two vertices $a$ and $a'$ in $A_v$ are adjacent if the pair $\{a,a'\}$ together with two suspension points of $\Gamma_v$ form a 4-cycles that does not strongly separates $\Gamma_v$ with respect to $f_{|\Gamma_v}$ (see Figure \ref{aja2}). We note that if $v_1$ and $v_2$ are endpoints of an edge $e$ of $T_r$, then by Lemma \ref{le1} the 4--cycles $\Gamma_e$ does not strongly separate each graph $\Gamma_{v_i}$ with respect to $f_{|\Gamma_{v_i}}$. Therefore, suspension vertices of $\Gamma_{v_1}$ are two adjacent elements in $A_{v_2}$ and similarly suspension vertices of $\Gamma_{v_2}$ are two adjacent elements in $A_{v_1}$.

Step 4: We now color vertices of $T_r$. For each vertex $v$ of $T_r$, the graph $\Gamma_v$ is a suspension of a vertex set $A_v$ of $T_r$. We remind that the weight of $v$, denoted by $w(v)$, is the number of elements of $A_v$. It is clear that $w(v)$ is also the number of pairs of adjacent elements in $A_v$ with respect to the above cyclic ordering on $A_v$. Since for each edge $e$ of the tree $T_r$ that contains $v$ as an endpoint the $4$--cycle $\Gamma_e$ does not strongly separate $\Gamma_v$, the $4$--cycle $\Gamma_e$ contains a unique pair of non-adjacent elements of $A_v$. Moreover, if $e'$ is another edge of $T_r$ that contains $v$ as an endpoint, $\Gamma_{e'}$ must contain a different pair of non-adjacent elements of $A_v$. Therefore, the weight $w(v)$ is always greater than or equal the degree of $v$ in $T_r$. We now color $v$ by the black if its weight is strictly greater than its degree. Otherwise, we color $v$ by the white.
\end{cons}

We now summarize some key properties of the tree $T_{r}$ in the above construction:

\begin{enumerate}
\item Each vertex $v$ of $T_r$ is associated to an induced subgraph $\Gamma_v$ of $\Gamma$ that is a suspension of a vertex set $A_v$ with at least 3 elements and there is some cyclic ordering on $A_v$. We call the number of elements in $A_v$ the \emph{weight} of vertex $v$, denoted $w(v)$. %We remark that in the new tree $T_r$ each vertex space $\Gamma_v$ may be no longer a suspension of three points so the number of elements of $A_v$ may be strictly greater than 3. 
The weight $w(v)$ of each vertex $v$ is greater than or equal its degree. We color $v$ by the black if its weight is strictly greater than its degree. Otherwise, we color $v$ by the white.
\item $\Gamma_v\neq\Gamma_{v'}$ if $v\neq v'$ and $\bigcup_{v\in V(T_r)}^{}\Gamma_v=\Gamma$.
\item Each edge $e$ of $T_r$ is associated to a 4--cycle $\Gamma_e$ of $\Gamma$. Moreover, $\Gamma_e\neq\Gamma_{e'}$ if $e\neq e'$.
\item Two vertices $v_1$ and $v_2$ of $T_r$ are endpoints of the same edge $e$ if and only if $\Gamma_{v_1}\cap\Gamma_{v_2}=\Gamma_e$. Moreover, if $v_1$ and $v_2$ are two adjacent vertices of $T_r$, suspension vertices of $\Gamma_{v_1}$ are two adjacent elements in $A_{v_2}$ and similarly suspension vertices of $\Gamma_{v_2}$ are two adjacent elements in $A_{v_1}$. Lastly, if $V_1$ and $V_2$ are vertex sets of two components of $T_r$ removed the midpoint of $e$, then $(\bigcup_{v\in V_1}\Gamma_v)\cap(\bigcup_{v\in V_2}\Gamma_v)=\Gamma_e$. %is a strong visual decomposition of $\Gamma$ along $\Gamma_e$ with respect to $f$.
\end{enumerate}

\begin{defn}[Visual decomposition trees]
Let $\Gamma$ be a $\mathcal{CFS}$ graph satisfying Standing Assumptions. A tree $T_r$ that encodes the structure of $\Gamma$ carrying Properties (1), (2), (3), and (4) as above is called a \emph{visual decomposition tree} of $\Gamma$.
\end{defn}

\begin{rem}
The existence of a visual decomposition tree for a $\mathcal{CFS}$ graph $\Gamma$ satisfying Standing Assumptions is guaranteed by Construction~\ref{cons:keyidea2}. We do not know whether or not the existence of visual decomposition tree for $\Gamma$ is unique. However, we only need the existence part of a such tree for our purposes. Moreover, it is not hard to draw a visual decomposition tree for a given $\mathcal{CFS}$ graph $\Gamma$ satisfying Standing Assumptions.
\end{rem}

\section{Right-angled Coxeter groups with planar defining graph}
\label{sec:racgandgraphmanifold}
In this section, we divide the collection of graphs $\Gamma$ satisfying Standing Assumptions into two types: $\mathcal{CFS}$ and non $\mathcal{CFS}$. For a $\mathcal{CFS}$ graphs $\Gamma$, we prove that the corresponding right-angled Coxeter group $G_\Gamma$ is virtually a Seifert manifold group if $\Gamma$ is a join and virtually a graph manifold group otherwise (see $(1)$ and $(2)$ in Theorem~\ref{thm:graphmanifold}). We then use the work of Behrstock-Neumann \cite{MR2376814} to classify all such groups $G_\Gamma$ up to quasi-isometry (see (\ref{haha}) in Theorem~\ref{thm:graphmanifold}). When graphs $\Gamma$ are non-join, $\mathcal{CFS}$ and satisfy Standing Assumptions, we give a characterization on $\Gamma$ for $G_\Gamma$ to be quasi-isometric to right-angled Artin groups and we also specify types of right-angled Artin groups which are quasi-isometric to such right-angled Coxeter groups (see Theorem~\ref{thm:racgraag}). For a non $\mathcal{CFS}$ graph $\Gamma$, we prove that the corresponding right-angled Coxeter groups $G_\Gamma$ is relatively hyperbolic with respect to a collection of $\mathcal{CFS}$ right-angled Coxeter subgroups of $G_\Gamma$ (see Theorem~\ref{sosonice}). These results have some applications on divergence of right-angled Coxeter groups.

% and finding a right-angled Coxeter group $G_\Gamma$, where $\Gamma$ is $\mathcal{CFS}$ and does not contain cycle of length $\geq 5$, which is not commensurable to any right-angled Artin group.

%\subsection{Tree structures of $\mathcal{CFS}$ graph}
%\label{nice}

\begin{figure}
\begin{tikzpicture}[scale=1.0]

\draw (0,0) node[circle,fill,inner sep=1.5pt, color=black](1){} -- (1,1) node[circle,fill,inner sep=1.5pt, color=black](1){}-- (2,0) node[circle,fill,inner sep=1.5pt, color=black](1){}-- (1,-1) node[circle,fill,inner sep=1.5pt, color=black](1){} -- (0,0) node[circle,fill,inner sep=1.5pt, color=black](1){};

\draw (-2,0) node[circle,fill,inner sep=1.5pt, color=black](1){} -- (1,1) node[circle,fill,inner sep=1.5pt, color=black](1){}-- (4,0) node[circle,fill,inner sep=1.5pt, color=black](1){}-- (1,-1) node[circle,fill,inner sep=1.5pt, color=black](1){} -- (-2,0) node[circle,fill,inner sep=1.5pt, color=black](1){};

\node at (1,1.4) {$u_1$};\node at (1,-1.4) {$u_2$}; \node at (-2.4,0) {$a$};\node at (-0.4,0) {$b$}; \node at (2.4,0) {$c$};\node at (4.4,0) {$d$}; \node at (1,1.8) {$\Gamma_v$};

\end{tikzpicture}

\caption{The graph $\Gamma_v$ is a suspension of the set $A_v=\{a,b,c,d\}$ with two suspension points $u_1$ and $u_2$. Since 4-cycles generated by $\{a,b,u_1,u_2\}$, $\{b,c,u_1,u_2\}$, $\{c,d,u_1,u_2\}$, and $\{d,a,u_1,u_2\}$ are not strongly separating, all pairs of adjacent elements in $A_v$ with respect to the cyclic ordering are $\{a,b\}$, $\{b,c\}$, $\{c,d\}$, and $\{d,a\}$.}
\label{aja2}
\end{figure}
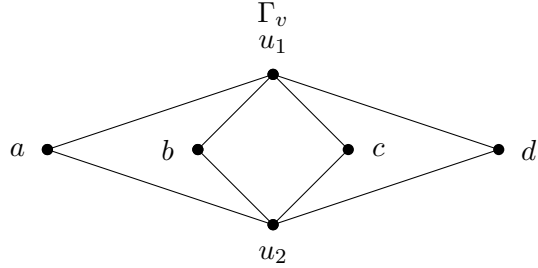

\subsection{Right-angled Coxeter groups with $\mathcal{CFS}$ graphs}
\label{nice2}
In this subsection, we will give the proof of Theorem~\ref{thm:graphmanifold} and Theorem~\ref{thm:racgraag}.

Let $\Gamma$ be a $\mathcal{CFS}$ graph satisfying Standing Assumptions. Let $T_r$ be a two-colored visual decomposition tree of $\Gamma$ (see Section~\ref{sec:graphdecomposition}). Since $\Gamma$ is planar, it follows that $G_{\Gamma}$ is virtually a $3$--manifold group. The fact $G_\Gamma$ is virtually Seifert manifold or graph manifold may not be surprising to experts. However for the purpose of obtaining a quasi-isometric classification (see (\ref{haha}) in Theorem~\ref{thm:graphmanifold}) we will construct explicitly a $3$--manifold $Y$ where the right-angled Coxeter group $G_{\Gamma}$ acts properly and cocompactly. We then elaborate the work of Kapovich-Leeb \cite{KL98}, Gersten \cite{Gersten94} to get the proof of Theorem~\ref{thm:graphmanifold}. We note that the construction of the manifold $Y$ is associated to the graph $T_{r}$, we then import the work of Behrstock-Neumann \cite{MR2376814} to get the proof of (\ref{haha}) in Theorem~\ref{thm:graphmanifold}.

%As we show in subsection \ref{nice}, there is a two-colored visual decomposition tree $T_r$ that encodes structure of $\Gamma$ as follows.

%\begin{enumerate}
%\item Each vertex $v$ of $T_r$ is associated to an induced subgraph $\Gamma_v$ of $\Gamma$ that is a suspension of a vertex set $A_v$ with at least 3 elements of $\Gamma$ and there is some cyclic ordering on $A_v$. We call the number of elements in $A_v$ the weight of vertex $v$, denoted $w(r)$. We color $v$ by the black if its weight is strictly greater than its degree. Otherwise, we color $v$ by the white. 
%\item $\Gamma_v\neq\Gamma_{v'}$ if $v\neq v'$ and $\bigcup_{v\in V(T_r)}^{}\Gamma_v=\Gamma$.
%\item Each edge $e$ of $T_r$ is associated to a 4--cycle $\Gamma_e$ of $\Gamma$. Moreover, $\Gamma_e\neq\Gamma_{e'}$ if $e\neq e'$.
%\item Two vertices $v_1$ and $v_2$ of $T_r$ are endpoints of the same edge $e$ if and only if $\Gamma_{v_1}\cap\Gamma_{v_2}=\Gamma_e$. Moreover, suspension vertices of $\Gamma_{v_1}$ are two adjacent elements in $A_{v_2}$ and similarly suspension vertices of $\Gamma_{v_2}$ are two adjacent elements in $A_{v_1}$. Lastly, if $V_1$ and $V_2$ are vertex sets of two components of $T_r$ removed the midpoint of $e$, then $(\bigcup_{v\in V_1}\Gamma_v,\bigcup_{v\in V_2}\Gamma_v)$ is a strong visual decomposition of $\Gamma$ along $\Gamma_e$ with respect to $f$. 
%\end{enumerate}
\begin{cons}
\label{cons:constructionmanifold}
We now construct a 3-manifold $Y$ on which the right-angled Coxeter group $G_\Gamma$ acts properly and cocompactly. For each vertex $v$ of $T_r$, the graph $\Gamma_v$ is a suspension of a finite set $A_v$ of vertices of $\Gamma$. Let $b$ and $c$ be suspension vertices and assume $A_v$ has $n$ elements labelled cyclically by $a_i$ where $i\in \ZZ_n$. The Davis complex of the right-angled Coxeter group $G_{A_v}$ is an $n$--regular tree $T_n$ with edges labelled by $a_i$. We now construct a ``fattened tree'' $F(T_n)$ of $T_n$ as follows:

We replace each vertex of $T_n$ by a regular $n$--gon with sides labelled cyclically by $\bar{a}_i$ and we also assume the length side of the $n$--gon is $1/2$. We replace each edge $E$ labelled by $a_i$ by a strip $E\times[-1/4,1/4]$. We label each side of length $1$ of the strip $E\times[-1/4,1/4]$ by $a_i$ and we identify the edge $E$ to $E\times\{0\}$ of the strip. Moreover, if $u$ is an endpoint of the edge $E$ of $T_n$, then the edge $\{u\}\times[-1/4,1/4]$ is identified to the side labelled by $a_i$ of the $n$--gon that replaces $u$. This is clear that the right-angled Coxeter group $G_{A_v}$ acts properly and cocompactly on the fattened tree $F(T_n)$ as an analogous way its acts on the Davis complex $T_n$. By the construction, for each $i\in\ZZ_n$ there is a bi-infinite boundary geodesic, denoted $\ell_{\{i-1,i\}}$, in $F(T_n)$ that is a concatenation of edges labelled by $a_{i-1}$ and $a_i$. 

The right-angled Coxeter group $G_{\{b,c\}}$ acts on the line $\ell$ that is a concatenation of edges labelled by $b$ and $c$ by edge reflections. Let $P_v=F(T_n)\times \ell$ and we equip on $P_v$ the product metric. Then, the right-angled Coxeter group $G_{\Gamma_v}$ acts properly and cocompactly on $P_v$ in the obvious way. The space $P_v$ is also a 3-manifold with boundaries. Moreover, for each $i\in\ZZ_n$ the right-angled Coxeter groups generated by $\{a_{i-1},a_i,b,c\}$ acts on the Euclidean plane $\ell_{\{i-1,i\}}\times \ell$ as an analogous way it acts on its Davis complex. We label this plane by $\{a_{i-1},a_i,b,c\}$. 

If $v_1$ and $v_2$ are two adjacent vertices in $T_r$, then the pair of suspension vertices $(a_1,a_2)$ of $\Gamma_{v_1}$ are pair of adjacent elements in $A_{v_2}$ and the pair of suspension vertices $(b_1,b_2)$ of $\Gamma_{v_2}$ are pair of adjacent elements in $A_{v_1}$. Therefore, two spaces $P_{v_1}$ and $P_{v_2}$ have two Euclidean planes that are both labeled by $\{a_1,a_2,b_1,b_2\}$ as we constructed above. Thus, using Bass-Serre tree $\tilde{T}_r$ of the decomposition of $G_\Gamma$ as tree $T_r$ of subgroups we can form a three manifold $Y$ by gluing copies of $P_v$ appropriately and we obtain a proper, cocompact action of $G_\Gamma$ on $Y$.
\end{cons}

We first give a proof of Theorem~\ref{thm:graphmanifold}.

\begin{proof}[Proof of Theorem~\ref{thm:graphmanifold}]
Let $Y$ be the manifold in Construction~\ref{cons:constructionmanifold}. For each vertex $v$ of $T_r$, let $P_v$ be the associated space in Construction~\ref{cons:constructionmanifold}.
We now are going to prove the necessity of $(1)$ and $(2)$.
Since $\Gamma$ is $\mathcal{CFS}$, the divergence of $G_\Gamma$ is either linear or quadratic by Theorem~\ref{dt}.
If the divergence of $G_\Gamma$ is linear, then $\Gamma$ is a join $\Gamma_1*\Gamma_2$ of two induced subgraphs $\Gamma_1$ and $\Gamma_2$ by Theorem~\ref{dt}. Since $\Gamma$ is triangle-free, has at least $5$ vertices, and has no separating vertices, then each graph $\Gamma_i$ contains no edges but at least two vertices. Also $\Gamma$ is planar. Therefore, either $\Gamma_1$ or $\Gamma_2$ must contain exactly two vertices and $\Gamma$ must be a suspension of at least $3$ vertices. Therefore, the tree $T_r$ constructed as in Construction~\ref{cons:keyidea2} consists of one vertex $v$ and $G_\Gamma$ acts properly and cocompactly on $P_v$. Let $H$ be a finite index, torsion free subgroup of $G_\Gamma$. Then $H$ has linear divergence and acts freely and cocompactly on $P_v$. Therefore, $H$ is the fundamental group of the compact manifold $M=P_v/H$. By possibly passing to a finite cover of $M$, we can assume that $M$ is orientable. Moreover, the boundary components of $M$ are torus, thus $M$ is a Seifert manifold by Remark~\ref{rem:seifertlinear}.

We now assume that the divergence of $G_\Gamma$ is quadratic. Let $H$ be a finite index, torsion free subgroup of $G_\Gamma$. Then $H$ acts freely and cocompactly on the 3-manifold $Y$. Thus, $H$ is the fundamental group of the compact manifold $M=Y/H$. By possibly passing to a finite cover of $M$, we can assume that $M$ is orientable. We note that $\partial M$ consists tori. Since the divergence of $H$ is quadratic, it follows that the divergence of $\pi_1(M)$ is quadratic. It follows $M$ is a non-geometric manifold, otherwise divergence of $\pi_1(M)$ is either linear or exponential. Thus $M$ is a graph manifold by Theorem~\ref{thm:GKL:divergence}.

We are going to prove the sufficiency of $(1)$ and $(2)$. Let $\Gamma$ be just a graph satisfying Standing Assumptions. If $G_\Gamma$ is virtually a Seifert manifold group. Then the divergence of $G_\Gamma$ is linear since the divergence of a Seifert manifold group is linear. Therefore, $\Gamma$ is a join by Theorem \ref{dt}. Also, $\Gamma$ is planar and triangle-free. Therefore, $\Gamma$ is a suspension of some distinct vertices.
 
If $G_{\Gamma}$ is virtually a graph manifold group. Then the divergence of $G_\Gamma$ is quadratic since the divergence of a graph manifold group is quadratic (see Theorem~\ref{thm:GKL:divergence}). Therefore, $\Gamma$ is $\mathcal{CFS}$ and it is not a join by Theorem \ref{dt}. Again, $\Gamma$ is planar and triangle-free. Thus, $\Gamma$ is $\mathcal{CFS}$ and it is not a suspension of distinct vertices.

We are now going to prove (\ref{haha}). Since the Bass-Serre tree $\tilde{T}_r$ weakly cover $T_r$, two trees $\tilde{T}_r$ and $T_r$ are bisimilar. Also, we can color vertices of $\tilde{T}_r$ using its weakly covering on $T_r$. We observe that a vertex $v$ of $\tilde{T}_r$ is colored by black if and only if the corresponding copy of some $P_v$ includes the boundary of $Y$. Using the proof of Theorem 3.2 in \cite{MR2376814}, we obtain the proof of theorem.
\end{proof}

%Since $\Gamma$ is $\mathcal{CFS}$ and $H$ is a finite index subgroup of $G_\Gamma$, the divergence of $H$ is linear or quadratic (see Theorem 1.1 in \cite{MR3314816}). Therefore, $M$ is a graph manifold (citation ?).

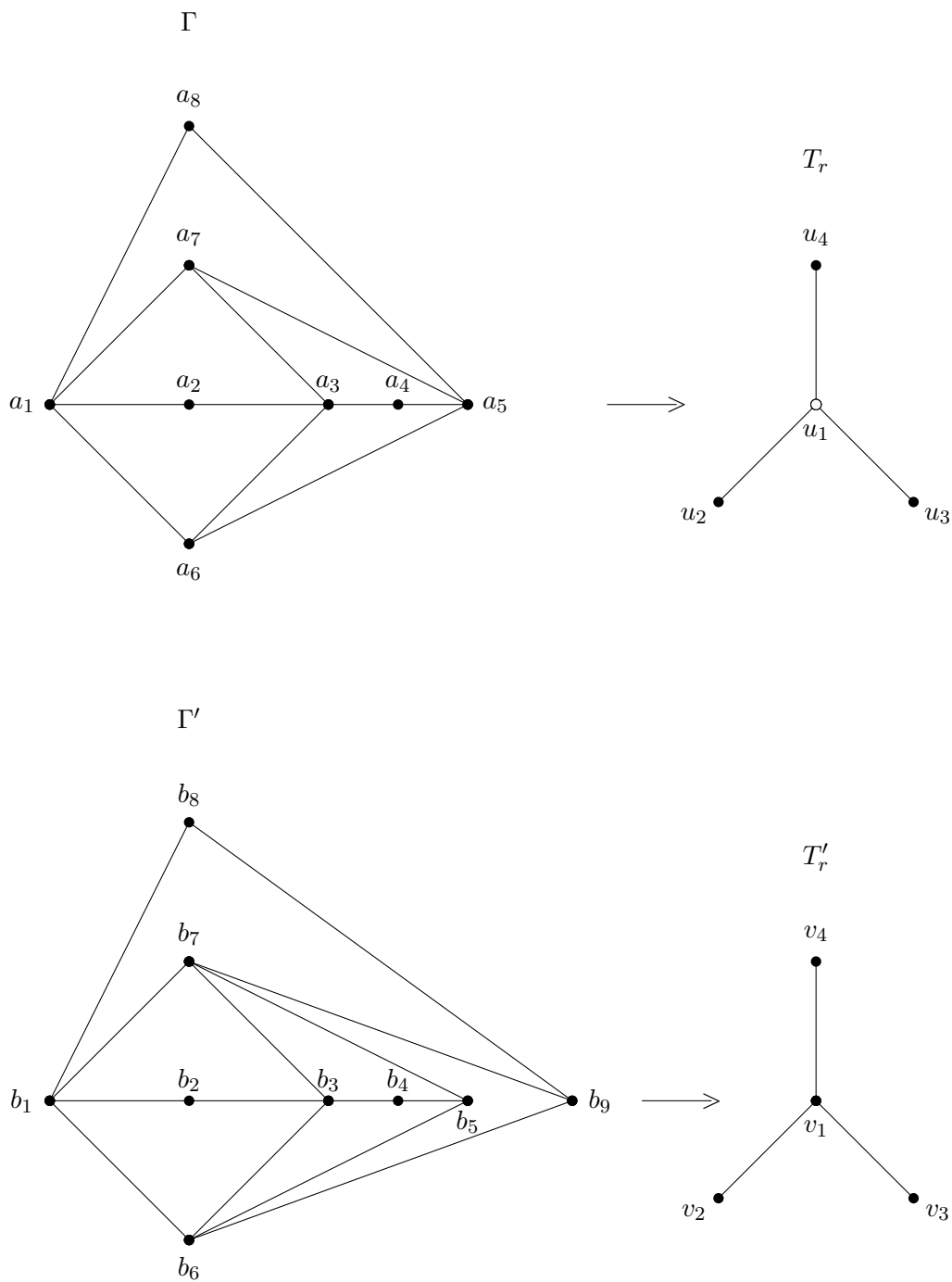
\begin{figure}
\begin{tikzpicture}[scale=1.0]

\draw (0,0) node[circle,fill,inner sep=1.5pt, color=black](1){} -- (2,0) node[circle,fill,inner sep=1.5pt, color=black](1){}-- (4,0) node[circle,fill,inner sep=1.5pt, color=black](1){}-- (5,0) node[circle,fill,inner sep=1.5pt, color=black](1){} -- (6,0) node[circle,fill,inner sep=1.5pt, color=black](1){};

\draw (2,2) node[circle,fill,inner sep=1.5pt, color=black](1){} -- (0,0) node[circle,fill,inner sep=1.5pt, color=black](1){};
\draw (2,2) node[circle,fill,inner sep=1.5pt, color=black](1){} -- (4,0) node[circle,fill,inner sep=1.5pt, color=black](1){};
\draw (2,2) node[circle,fill,inner sep=1.5pt, color=black](1){} -- (6,0) node[circle,fill,inner sep=1.5pt, color=black](1){};

\draw (2,-2) node[circle,fill,inner sep=1.5pt, color=black](1){} -- (0,0) node[circle,fill,inner sep=1.5pt, color=black](1){};
\draw (2,-2) node[circle,fill,inner sep=1.5pt, color=black](1){} -- (4,0) node[circle,fill,inner sep=1.5pt, color=black](1){};
\draw (2,-2) node[circle,fill,inner sep=1.5pt, color=black](1){} -- (6,0) node[circle,fill,inner sep=1.5pt, color=black](1){};

\draw (0,0) node[circle,fill,inner sep=1.5pt, color=black](1){} -- (2,4) node[circle,fill,inner sep=1.5pt, color=black](1){} -- (6,0) node[circle,fill,inner sep=1.5pt, color=black](1){};

\node at (-.4,0) {$a_1$};\node at (2,0.3) {$a_2$};\node at (4,0.3) {$a_3$};\node at (5,0.3) {$a_4$};\node at (6.4,0) {$a_5$}; \node at (2,-2.4) {$a_6$};\node at (2,2.4) {$a_7$};\node at (2,4.4) {$a_8$}; \draw (8,0)--(9,0);\node at (9,0) {$>$}; 

\node at (2.0,5.5) {$\Gamma$};

\draw (11,0) node[circle,draw=black,inner sep=1.5pt, fill=white](1){} -- (11,2) node[circle,fill,inner sep=1.5pt, color=black](1){};
\draw (11,0) node[circle,draw=black,inner sep=1.5pt, fill=white](1){} -- (9.6,-1.4) node[circle,fill,inner sep=1.5pt, color=black](1){};
\draw (11,0) node[circle,draw=black,inner sep=1.5pt, fill=white](1){} -- (12.4,-1.4) node[circle,fill,inner sep=1.5pt, color=black](1){};

\node at (11,-0.4) {$u_1$};\node at (9.25,-1.6) {$u_2$};\node at (12.75,-1.6) {$u_3$};\node at (11,2.4) {$u_4$};

\node at (11,3.5) {$T_r$};

\draw (0,-10) node[circle,fill,inner sep=1.5pt, color=black](1){} -- (2,-10) node[circle,fill,inner sep=1.5pt, color=black](1){}-- (4,-10) node[circle,fill,inner sep=1.5pt, color=black](1){}-- (5,-10) node[circle,fill,inner sep=1.5pt, color=black](1){} -- (6,-10) node[circle,fill,inner sep=1.5pt, color=black](1){};

\draw (2,-8) node[circle,fill,inner sep=1.5pt, color=black](1){} -- (0,-10) node[circle,fill,inner sep=1.5pt, color=black](1){};
\draw (2,-8) node[circle,fill,inner sep=1.5pt, color=black](1){} -- (4,-10) node[circle,fill,inner sep=1.5pt, color=black](1){};
\draw (2,-8) node[circle,fill,inner sep=1.5pt, color=black](1){} -- (6,-10) node[circle,fill,inner sep=1.5pt, color=black](1){};
\draw (2,-8) node[circle,fill,inner sep=1.5pt, color=black](1){} -- (7.5,-10) node[circle,fill,inner sep=1.5pt, color=black](1){};

\draw (2,-12) node[circle,fill,inner sep=1.5pt, color=black](1){} -- (0,-10) node[circle,fill,inner sep=1.5pt, color=black](1){};
\draw (2,-12) node[circle,fill,inner sep=1.5pt, color=black](1){} -- (4,-10) node[circle,fill,inner sep=1.5pt, color=black](1){};
\draw (2,-12) node[circle,fill,inner sep=1.5pt, color=black](1){} -- (6,-10) node[circle,fill,inner sep=1.5pt, color=black](1){};
\draw (2,-12) node[circle,fill,inner sep=1.5pt, color=black](1){} -- (7.5,-10) node[circle,fill,inner sep=1.5pt, color=black](1){};

\draw (0,-10) node[circle,fill,inner sep=1.5pt, color=black](1){} -- (2,-6) node[circle,fill,inner sep=1.5pt, color=black](1){} -- (7.5,-10) node[circle,fill,inner sep=1.5pt, color=black](1){};

\node at (-.4,-10) {$b_1$};\node at (2,-9.7) {$b_2$};\node at (4,-9.7) {$b_3$};\node at (5,-9.7) {$b_4$};\node at (6,-10.3) {$b_5$}; \node at (2,-12.4) {$b_6$};\node at (2,-7.6) {$b_7$};\node at (2,-5.6) {$b_8$}; \node at (7.9,-10) {$b_9$};\draw (8.5,-10)--(9.5,-10);\node at (9.5,-10) {$>$}; 

\node at (2.0,-4.5) {$\Gamma'$};

\draw (11,-10) node[circle,fill,inner sep=1.5pt, color=black](1){} -- (11,-8) node[circle,fill,inner sep=1.5pt, color=black](1){};
\draw (11,-10) node[circle,fill,inner sep=1.5pt, color=black](1){} -- (9.6,-11.4) node[circle,fill,inner sep=1.5pt, color=black](1){};
\draw (11,-10) node[circle,fill,inner sep=1.5pt, color=black](1){} -- (12.4,-11.4) node[circle,fill,inner sep=1.5pt, color=black](1){};

\node at (11,-10.4) {$v_1$};\node at (9.25,-11.6) {$v_2$};\node at (12.75,-11.6) {$v_3$};\node at (11,-7.6) {$v_4$};

\node at (11,-6.5) {$T_r'$};

\end{tikzpicture}

\caption{Two groups $G_{\Gamma}$ and $G_{\Gamma'}$ are not quasi-isometric because two corresponding decomposition trees $T_r$ and $T_r'$ are not bisimilar.}
\label{afifth}
\end{figure}

\begin{exmp}
\label{niceexample}
Let $\Gamma$ and $\Gamma'$ be graphs in Figure \ref{afifth}. It is not hard to see a visual decomposition tree $T_r$ of $\Gamma$ is shown in the same figure with the following information. Graph $\Gamma_{u_1}$ is the suspension of three vertices $a_1$, $a_3$, and $a_5$ with two suspension vertices $a_6$ and $a_7$. Graph $\Gamma_{u_2}$ is the suspension of three vertices $a_2$, $a_6$, and $a_7$ with two suspension vertices $a_1$ and $a_3$. Graph $\Gamma_{u_3}$ is the suspension of three vertices $a_4$, $a_6$, and $a_7$ with two suspension vertices $a_3$ and $a_5$. Graph $\Gamma_{u_4}$ is the suspension of three vertices $a_6$, $a_7$, and $a_8$ with two suspension vertices $a_1$ and $a_5$. We observe that each $u_i$ has weight $3$. Therefore, three vertices $u_2$, $u_3$, and $u_4$ are colored by black and $u_1$ is colored by white.

Similarly, a visual decomposition tree $T_r'$ of $\Gamma'$ is also shown in the Figure \ref{afifth} with the following information. Graph $\Gamma_{v_1}$ is the suspension of four vertices $b_1$, $b_3$, $b_5$, and $b_9$ with two suspension vertices $b_6$ and $b_7$. Graph $\Gamma_{v_2}$ is the suspension of three vertices $b_2$, $b_6$, and $b_7$ with two suspension vertices $b_1$ and $b_3$. Graph $\Gamma_{v_3}$ is the suspension of three vertices $b_4$, $b_6$, and $b_7$ with two suspension vertices $b_3$ and $b_5$. Graph $\Gamma_{v_4}$ is the suspension of three vertices $b_6$, $b_7$, and $b_8$ with two suspension vertices $b_1$ and $b_9$. We observe that each $v_i$ has weight $3$ excepts $v_1$ has weight $4$. Therefore, all four vertices $v_i$ are colored by black. Therefore, two visual decomposition trees $T_r$ and $T_r'$ are not bisimilar although they are isomorphic if we ignore the vertex colors. Therefore, two groups $G_\Gamma$ and $G_{\Gamma'}$ are not quasi-isometric.
\end{exmp} 

We now discuss connection between right-angled Coxeter groups $G_\Gamma$ of non-join, $\mathcal{CFS}$ graphs $\Gamma$ satisfying Standing Assumptions and right-angled Artin groups. 

%\begin{thm}
%Let $\Gamma$ be a $\mathcal{CFS}$, non-join graph satisfying Standing Assumptions and $T_r$ a visual decomposition tree of $\Gamma$. Then the following are equivalent:
%\begin{enumerate}
%\item The right-angled Coxeter group $G_\Gamma$ is quasi-isometric to a right-angled Artin group.
%\item The right-angled Coxeter group $G_\Gamma$ is quasi-isometric to the right-angled Artin group of a tree of diameter at least 3.
%\item The right-angled Coxeter group $G_\Gamma$ is quasi-isometric to the right-angled Artin group of a tree of diameter exactly 3.
%\item All vertices of the tree $T_r$ are black.
%\end{enumerate}
%\end{thm}

\begin{proof}[Proof of Theorem~\ref{thm:racgraag}]
We first prove that two statements (1) and (2) are equivalent and it suffices to prove that statement (1) implies statement (2). Assume the right-angled Coxeter group $G_\Gamma$ is quasi-isometric to a right-angled Artin group $A_\Omega$. Then $A_\Omega$ is one-ended and quasi-isometric to a 3--manifold group by Theorem \ref{thm:graphmanifold}. Therefore, $A_\Omega$ is a one-ended, 3--manifold group by Theorem \ref{bnam}. Thus, $\Omega$ is a tree or a triangle by Theorem \ref{Gor}. Since $G_\Gamma$ is virtually a graph manifold group by Theorem \ref{thm:graphmanifold}, the graph $\Omega$ must be a tree of diameter at least 3. Therefore, two statements (1) and (2) are equivalent. 

The equivalence between two statement (2) are (3) are proved by Behrstock-Neumann in \cite{MR2376814}. We now prove that two statements (3) and (4) are equivalent. We first prove statement (3) implies statement (4). Assume that the right-angled Coxeter group $G_\Gamma$ is quasi-isometric to the right-angled Artin group $A_\Omega$ of a tree $\Omega$ of diameter exactly 3. We now assume for the contradiction that the tree $T_r$ contains a white vertex. As we discussed above, $G_\Gamma$ is virtually a fundamental group of a graph manifold $M$ such that $M$ has at least one Seifert component that does not contain any boundary component of $M$. Therefore, the group $A_\Omega$ is quasi-isometric to $\pi_1(M)$. On the other hand, Behrstock-Neumann in \cite{MR2376814} shows that $A_\Omega$ is the fundamental group of a graph manifold $M'$ with boundary components in each Seifert piece and the fundamental group of the such manifold $M'$ is not quasi-isometric to $\pi_1(M)$, this is a contradiction. Therefore, all vertices of the tree $T_r$ are black.

We now prove that statement (4) implies statement (3). In fact, if all vertices of the tree $T_r$ are black, the group $G_\Gamma$ is virtually the fundamental group of a graph manifold $M_1$ with boundary components in each Seifert piece. Also, the right-angled Artin group $A_\Omega$ of a tree $\Omega$ of diameter exactly 3 is the fundamental group of a graph manifold $M_2$ with boundary components in each Seifert piece. Moreover, two groups $\pi_1(M_1)$ and $\pi_1(M_2)$ are quasi-isometric by Behrstock-Neumann \cite{MR2376814}. Therefore, the right-angled Coxeter group $G_\Gamma$ is quasi-isometric to the right-angled Artin group $A_\Omega$. 
\end{proof}

\subsection{Right-angled Coxeter groups with non-$\mathcal{CFS}$ graphs}
\label{nice3}

In this subsection, we are going to prove Theorem~\ref{sosonice}.

Let $\Gamma$ be a non $\mathcal{CFS}$ graph satisfying Standing Assumptions. Let $f:\Gamma\to \RR^2$ be an embedding. Let $T$ be a tree that encodes the structure of $\Gamma$ as in Proposition \ref{prop:keyidea1}. Since $\Gamma$ is not a $\mathcal{CFS}$ graph, there is a vertex $v_0$ of $T$ such that $\Gamma_{v_0}$ does not contain a suspension of three points.

For each adjacent edge $e$ of $v_0$ let $V_e^1$ and $V_e^2$ be vertex sets of two components of $T$ removed the midpoint of $e$ and we assume that $V_e^2$ contains the vertex $v_0$. Let $K_e=\bigcup_{v\in V_e^1}\Gamma_v$ and $L_e=\bigcup_{v\in V_e^2}\Gamma_v$. Then $K_e\cap L_e=\Gamma_e$ by Proposition \ref{prop:keyidea1}.

Let $e_1$ and $e_2$ be two arbitrary adjacent edges of $v_0$. Then it is clear that $V_{e_1}^1\subset V_{e_2}^2$ and $V_{e_2}^1\subset V_{e_1}^2$. Therefore, $K_{e_1}\subset L_{e_2}$ and $K_{e_2}\subset L_{e_1}$. Therefore, $K_{e_1}\cap	K_{e_2}\subset L_{e_2}\cap	K_{e_2}\subset \Gamma_{e_2}$. Similarly, we also have $K_{e_1}\cap	K_{e_2}\subset \Gamma_{e_1}$. This implies that $K_{e_1}\cap	K_{e_2}\subset \Gamma_{e_1}\cap \Gamma_{e_2}$. Also $\Gamma_{e_1}$ and $\Gamma_{e_2}$ are both 4-cycles in $\Gamma_{v_0}$ which does not contain a suspension of three points. Thus, $\Gamma_{e_1}\cap	\Gamma_{e_2}$ is empty or a vertex or an edge. Therefore, $K_{e_1}\cap	K_{e_2}$ is empty or a vertex or an edge.

Let $\JJ_{v_0}^1$ be the collection of all graphs $K_e$ for edges $e$ adjacent to $v_0$. Then $\JJ_{v_0}^1$ satisfies Condition (2) of Theorem \ref{th1} by the above argument. Let $\JJ_{v_0}^2$ be the collection of all 4-cycles in $\Gamma_{v_0}$ which are distinct from $\Gamma_e$ for adjacent edge $e$ of $v_0$. Since $\Gamma_{v_0}$ which does not contain a suspension of three points, $\JJ_{v_0}^2$ also satisfies Condition (2) of Theorem \ref{th1}. Let $\JJ_{v_0}=\JJ_{v_0}^1\cup\JJ_{v_0}^2$.

We use the following proposition in the proof of Theorem~\ref{sosonice}.
\begin{prop}
\label{nice1}
The right-angled Coxeter group $G_\Gamma$ is relatively hyperbolic with respect to the collection $\PP_{v_0}=\set{G_J}{J\in \JJ_{v_0}}$.
\end{prop}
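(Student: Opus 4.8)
The plan is to apply Caprace's criterion, Theorem~\ref{th1}, directly to the collection $\JJ_{v_0}$; that is, to verify its three conditions for $\JJ=\JJ_{v_0}$. Condition~(2) (pairwise intersections are complete subgraphs) has, apart from the cross terms, already been checked in the discussion preceding the statement: the sets $K_{e_1}\cap K_{e_2}$ and the intersections of two distinct $4$--cycles of $\Gamma_{v_0}$ are empty, a vertex, or an edge. For a cross term $K_e\cap\alpha$ with $\alpha\in\JJ_{v_0}^2$ one first notes $K_e\cap\Gamma_{v_0}=\Gamma_e$ (because $K_e\cap L_e=\Gamma_e$ and $\Gamma_{v_0}\subset L_e$), so $K_e\cap\alpha=\Gamma_e\cap\alpha$, again an intersection of two distinct $4$--cycles in $\Gamma_{v_0}$. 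Throughout, the two recurring tools are: for each edge $e$ adjacent to $v_0$ the pair $(K_e,L_e)$ is a \emph{strong} visual decomposition along $\Gamma_e$, so $\Gamma$ has no edge joining $K_e\setminus\Gamma_e$ to $L_e\setminus\Gamma_e$; and, by Lemma~\ref{le0}, the prime graph $\Gamma_{v_0}$ contains \emph{no} suspension of three points. Combined with triangle--freeness, these force the relevant configurations to collapse.

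For Condition~(1) I would show every induced $4$--cycle $\sigma$ of $\Gamma$ lies in $\Gamma_{v_0}$ or in a single $K_e$. If $\sigma\subset\Gamma_{v_0}$ then either $\sigma=\Gamma_e$ for an edge $e$ at $v_0$, whence $\sigma\subset K_e$, or $\sigma\in\JJ_{v_0}^2$ by definition. Otherwise $\sigma$ has a vertex $x\in K_e\setminus\Gamma_e$ for some $e$, and I claim no vertex of $\sigma$ can lie in $L_e\setminus\Gamma_e$. Indeed such a vertex $y$ would be non-adjacent to $x$ (no cross edges), hence antipodal to $x$ in $\sigma$, and the other two vertices $z,w$, being adjacent to both $x$ and $y$, would lie in $K_e\cap L_e=\Gamma_e$ as a non-adjacent (diagonal) pair. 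Writing $p,q$ for the other diagonal of $\Gamma_e$, triangle--freeness makes $\{y,p,q\}$ an independent triple suspended by $\{z,w\}$: if $y\in\Gamma_{v_0}$ this is a suspension of three points inside $\Gamma_{v_0}$, a contradiction; if $y\notin\Gamma_{v_0}$, then $y\in K_{e'}\setminus\Gamma_{e'}$ for some $e'\neq e$ and the same no-cross-edge argument puts $z,w$ on the diagonal of $\Gamma_{e'}$ as well, so $z,w$ acquire at least three common neighbours in $\Gamma_{v_0}$ (again a forbidden suspension) unless $\Gamma_e=\Gamma_{e'}$, which is impossible for $e\neq e'$. Hence $\sigma\subset K_e$, as desired.

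Condition~(3) is handled by the same mechanism applied to a single vertex rather than a $4$--cycle. Given $s$ adjacent to a non-adjacent pair $a,b$ of some $J\in\JJ_{v_0}$, I would suppose $s\notin J$ and derive a contradiction. When $J=K_e$, the no-cross-edge property forces $a,b\in\Gamma_e$ (a diagonal), and then $s$ together with the opposite diagonal of $\Gamma_e$ yields an independent triple suspended by $\{a,b\}$, hence a suspension of three points that must live in $\Gamma_{v_0}$ (contradiction) or force two distinct edge $4$--cycles to coincide (contradiction); when $J=\alpha\in\JJ_{v_0}^2$ the pair $a,b$ is a diagonal of $\alpha\subset\Gamma_{v_0}$ and an identical case analysis on the location of $s$ again produces a forbidden suspension of three points in $\Gamma_{v_0}$, or forces $\alpha=\Gamma_{e'}$ for some edge $e'$, contrary to $\alpha\in\JJ_{v_0}^2$. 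With all three conditions verified, Theorem~\ref{th1} gives that $G_\Gamma$ is hyperbolic relative to $\PP_{v_0}=\set{G_J}{J\in\JJ_{v_0}}$.

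The main obstacle is precisely the ``straddling'' configurations in Conditions~(1) and~(3) --- a $4$--cycle or a commuting vertex whose vertices spread across a decomposition $(K_e,L_e)$. The delicate point is that the \emph{strong} (rather than merely combinatorial) separation, encoded by the embedding $f$, is exactly what guarantees the absence of cross edges and lets every such straddling configuration be converted into a suspension of three points inside the prime piece $\Gamma_{v_0}$, which Lemma~\ref{le0} forbids. Verifying that this conversion is always available --- and that the only alternative, the coincidence of two distinct edge $4$--cycles, is genuinely excluded --- is where the bookkeeping with the tree $T$ of Proposition~\ref{prop:keyidea1} must be carried out carefully.
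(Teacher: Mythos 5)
Your proposal is correct and takes essentially the same route as the paper: both verify the three conditions of Caprace's criterion (Theorem~\ref{th1}) for $\JJ_{v_0}$, using the strong visual decompositions $(K_e,L_e)$ to forbid edges straddling $\Gamma_e$ and converting every would-be violation into a suspension of three points inside the prime piece $\Gamma_{v_0}$, which the choice of $v_0$ excludes. The only differences are cosmetic: the paper checks Condition~(3) first and reuses it in Condition~(1), whereas you argue Condition~(1) directly and spell out the degenerate case $\Gamma_e=\Gamma_{e'}$, which the paper handles implicitly via the distinctness of edge $4$--cycles in Proposition~\ref{prop:keyidea1}.
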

\begin{proof}

We will prove that $\JJ_{v_0}$ also satisfies Condition (2) of Theorem \ref{th1}. It suffices to show the intersection between a graph $K_e$ in $\JJ_{v_0}^1$ and a 4-cycle $\sigma$ in $\JJ_{v_0}^2$ is empty or a vertex or an edge. Indeed, $K_e\cap \sigma=K_e \cap (\Gamma_{v_0}\cap \sigma)=(K_e\cap \Gamma_{v_0})\cap \sigma=\Gamma_e\cap \sigma$ which is empty or a vertex or an edge since $\Gamma_{v_0}$ does not contain a suspension of three points. Therefore, $\JJ_{v_0}$ satisfies Condition (2) of Theorem \ref{th1}.

We now prove that $\JJ_{v_0}$ satisfies Condition (3) of Theorem \ref{th1}. We first prove that $\JJ_{v_0}^2$ satisfies Condition (3) of Theorem \ref{th1}. Let $\sigma$ be a 4-cycle in $\JJ_{v_0}^2$ and $d$ be a vertex that is adjacent to non-adjacent vertices $b$ and $c$ of $\sigma$. We now prove that $d$ is a vertex of $\Gamma_{v_0}$. Assume for the contradiction that $d$ does not belong to $\Gamma_{v_0}$. Therefore, $d$ is a vertex of $K_e-\Gamma_e$ for some adjacent edge $e$ of $v_0$. Since $\Gamma_e \cap \sigma$ does not contain non-adjacent vertices, either $b$ or $c$ (say $b$) does not belong to $\Gamma_e$. Therefore, two vertices $b$ and $d$ lies in the same component of $\Gamma-\Gamma_e$. This implies that $K_e-\Gamma_e$ and $\Gamma_{v_0}-\Gamma_e$ are contained in the same component of $\Gamma-\Gamma_e$ which is a contradiction. Therefore, $d$ is a vertex of $\Gamma_{v_0}$. Since $\Gamma_{v_0}$ does not contain a suspension of three points, then $d$ is a vertex of $\sigma$. Therefore, $\JJ_{v_0}^2$ satisfies Condition (3) of Theorem \ref{th1}. 
We now prove that $\JJ_{v_0}^1$ satisfies Condition (3) of Theorem \ref{th1}. Let $K_e$ be a subgraph in $\JJ_{v_0}^1$ and $d$ a vertex that are adjacent to non-adjacent vertices $b$ and $c$ of $K_e$. Assume for the contradiction that $d$ is not a vertex $K_e$. Using a similar argument as above, two points $b$ and $c$ are vertices of $\Gamma_e$. Therefore, if $d$ is a vertex of $\Gamma_{v_0}$, then $\Gamma_{v_0}$ contains a suspension of three points which is a contradiction. Thus, $d$ is a vertex of $K_{e_1}-\Gamma_{e_1}$ for some adjacent edge $e_1$ of $v_0$ other than $e$. Also $K_e\subset L_{e_1}$ as we observe above, then two points $b$ and $c$ are vertices of $L_{e_1}$. Therefore, using a similar argument as above, two points $b$ and $c$ are vertices of $\Gamma_{e_1}$. Therefore, $\Gamma_e\cap\Gamma_{e_1}$ contains two non-adjacent vertices $b$ and $c$. This implies that $\Gamma_{v_0}$ contains a suspension of three points which is a contradiction. Therefore, $\JJ_{v_0}^1$ satisfies Condition (3) of Theorem \ref{th1}. Thus, $\JJ_{v_0}$ satisfies Condition (3) of Theorem \ref{th1}.

Finally, we prove that $\JJ_{v_0}$ satisfies Condition (1) of Theorem \ref{th1}. Let $\sigma$ be an arbitrary 4--cycle of $\Gamma$. It is clear that if $\sigma\cap \Gamma_e$ does not contains non-adjacent vertices for all adjacent edge $e$ of $v_0$, then $\sigma$ is either a
4--cycle in $\JJ_{v_0}^2$ or a 4-cycle in a subgraph of $\JJ_{v_0}^1$. Now we assume that there is an adjacent edge $e$ of $v_0$ such that $\sigma\cap \Gamma_e$ contains two non-adjacent vertices $b_1$ and $b_2$. Let $a_1$ and $a_2$ be the remaining vertices of $\sigma$. Since both $a_1$ and $a_2$ are adjacent to both vertices of $K_e$, they are all vertices of $K_e$ as we prove above. Thus, $\sigma$ is a 4--cycle of $K_e$. Therefore, $\JJ_{v_0}$ satisfies Condition (1) of Theorem \ref{th1}.
\end{proof}

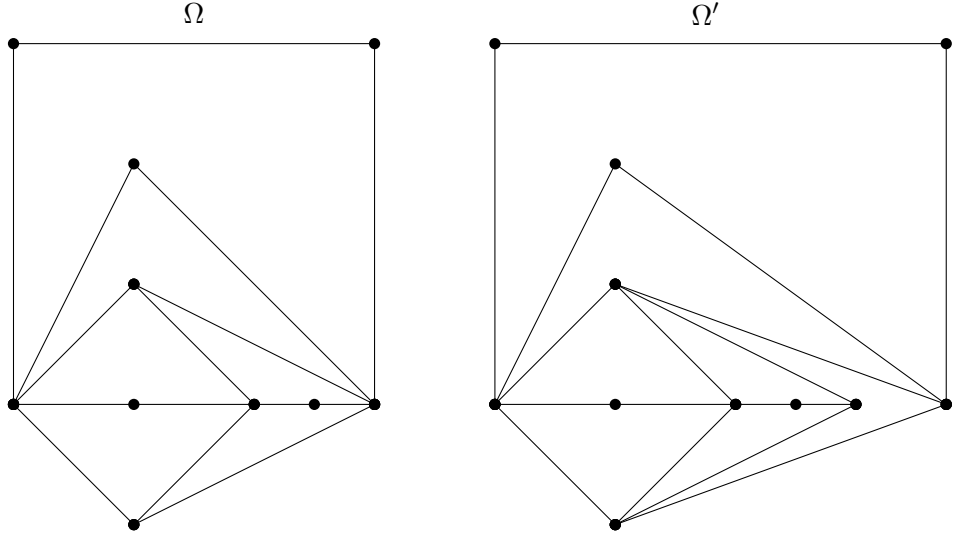
\begin{figure}
\begin{tikzpicture}[scale=0.8]

\draw (0,0) node[circle,fill,inner sep=1.5pt, color=black](1){} -- (2,0) node[circle,fill,inner sep=1.5pt, color=black](1){}-- (4,0) node[circle,fill,inner sep=1.5pt, color=black](1){}-- (5,0) node[circle,fill,inner sep=1.5pt, color=black](1){} -- (6,0) node[circle,fill,inner sep=1.5pt, color=black](1){};

\draw (2,2) node[circle,fill,inner sep=1.5pt, color=black](1){} -- (0,0) node[circle,fill,inner sep=1.5pt, color=black](1){};
\draw (2,2) node[circle,fill,inner sep=1.5pt, color=black](1){} -- (4,0) node[circle,fill,inner sep=1.5pt, color=black](1){};
\draw (2,2) node[circle,fill,inner sep=1.5pt, color=black](1){} -- (6,0) node[circle,fill,inner sep=1.5pt, color=black](1){};

\draw (2,-2) node[circle,fill,inner sep=1.5pt, color=black](1){} -- (0,0) node[circle,fill,inner sep=1.5pt, color=black](1){};
\draw (2,-2) node[circle,fill,inner sep=1.5pt, color=black](1){} -- (4,0) node[circle,fill,inner sep=1.5pt, color=black](1){};
\draw (2,-2) node[circle,fill,inner sep=1.5pt, color=black](1){} -- (6,0) node[circle,fill,inner sep=1.5pt, color=black](1){};

\draw (0,0) node[circle,fill,inner sep=1.5pt, color=black](1){} -- (2,4) node[circle,fill,inner sep=1.5pt, color=black](1){} -- (6,0) node[circle,fill,inner sep=1.5pt, color=black](1){};

%\node at (-.4,0) {$a_1$};\node at (2,0.3) {$a_2$};\node at (4,0.3) {$a_3$};\node at (5,0.3) {$a_4$};\node at (6.4,0) {$a_5$}; \node at (2,-2.4) {$a_6$};\node at (2,2.4) {$a_7$};\node at (2,4.4) {$a_8$}; %\draw (8,0)--(9,0);\node at (9,0) {$>$};

\draw (0,0) node[circle,fill,inner sep=1.5pt, color=black](1){} -- (0,6) node[circle,fill,inner sep=1.5pt, color=black](1){}-- (6,6) node[circle,fill,inner sep=1.5pt, color=black](1){}-- (6,0) node[circle,fill,inner sep=1.5pt, color=black](1){};

\node at (3.0,6.5) {$\Omega$};

%\draw (11,0) node[circle,draw=black,inner sep=1.5pt, fill=white](1){} -- (11,2) node[circle,fill,inner sep=1.5pt, color=black](1){};
%\draw (11,0) node[circle,draw=black,inner sep=1.5pt, fill=white](1){} -- (9.6,-1.4) node[circle,fill,inner sep=1.5pt, color=black](1){};
%\draw (11,0) node[circle,draw=black,inner sep=1.5pt, fill=white](1){} -- (12.4,-1.4) node[circle,fill,inner sep=1.5pt, color=black](1){};

%\node at (11,-0.4) {$u_1$};\node at (9.25,-1.6) {$u_2$};\node at (12.75,-1.6) {$u_3$};\node at (11,2.4) {$u_4$};

%\node at (11,3.5) {$T_r$};

\draw (0+8,-10+10) node[circle,fill,inner sep=1.5pt, color=black](1){} -- (2+8,-10+10) node[circle,fill,inner sep=1.5pt, color=black](1){}-- (4+8,-10+10) node[circle,fill,inner sep=1.5pt, color=black](1){}-- (5+8,-10+10) node[circle,fill,inner sep=1.5pt, color=black](1){} -- (6+8,-10+10) node[circle,fill,inner sep=1.5pt, color=black](1){};

\draw (2+8,-8+10) node[circle,fill,inner sep=1.5pt, color=black](1){} -- (0+8,-10+10) node[circle,fill,inner sep=1.5pt, color=black](1){};
\draw (2+8,-8+10) node[circle,fill,inner sep=1.5pt, color=black](1){} -- (4+8,-10+10) node[circle,fill,inner sep=1.5pt, color=black](1){};
\draw (2+8,-8+10) node[circle,fill,inner sep=1.5pt, color=black](1){} -- (6+8,-10+10) node[circle,fill,inner sep=1.5pt, color=black](1){};
\draw (2+8,-8+10) node[circle,fill,inner sep=1.5pt, color=black](1){} -- (7.5+8,-10+10) node[circle,fill,inner sep=1.5pt, color=black](1){};

\draw (2+8,-12+10) node[circle,fill,inner sep=1.5pt, color=black](1){} -- (0+8,-10+10) node[circle,fill,inner sep=1.5pt, color=black](1){};
\draw (2+8,-12+10) node[circle,fill,inner sep=1.5pt, color=black](1){} -- (4+8,-10+10) node[circle,fill,inner sep=1.5pt, color=black](1){};
\draw (2+8,-12+10) node[circle,fill,inner sep=1.5pt, color=black](1){} -- (6+8,-10+10) node[circle,fill,inner sep=1.5pt, color=black](1){};
\draw (2+8,-12+10) node[circle,fill,inner sep=1.5pt, color=black](1){} -- (7.5+8,-10+10) node[circle,fill,inner sep=1.5pt, color=black](1){};

\draw (0+8,-10+10) node[circle,fill,inner sep=1.5pt, color=black](1){} -- (2+8,-6+10) node[circle,fill,inner sep=1.5pt, color=black](1){} -- (7.5+8,-10+10) node[circle,fill,inner sep=1.5pt, color=black](1){};

%\node at (-.4+8,-10+10) {$b_1$};\node at (2+8,-9.7+10) {$b_2$};\node at (4+8,-9.7+10) {$b_3$};\node at (5+8,-9.7+10) {$b_4$};\node at (6+8,-10.3+10) {$b_5$}; \node at (2+8,-12.4+10) {$b_6$};\node at (2+8,-7.6+10) {$b_7$};\node at (2+8,-5.6+10) {$b_8$}; \node at (7.9+8,-10+10) {$b_9$};%\draw (8.5,-10)--(9.5,-10);\node at (9.5,-10) {$>$}; 

\draw (0+8,0) node[circle,fill,inner sep=1.5pt, color=black](1){} -- (0+8,6) node[circle,fill,inner sep=1.5pt, color=black](1){}-- (15.5,6) node[circle,fill,inner sep=1.5pt, color=black](1){}-- (15.5,0) node[circle,fill,inner sep=1.5pt, color=black](1){};

\node at (2.0+8+1.5,-4.5+10+1) {$\Omega'$};

%\draw (11,-10) node[circle,fill,inner sep=1.5pt, color=black](1){} -- (11,-8) node[circle,fill,inner sep=1.5pt, color=black](1){};
%\draw (11,-10) node[circle,fill,inner sep=1.5pt, color=black](1){} -- (9.6,-11.4) node[circle,fill,inner sep=1.5pt, color=black](1){};
%\draw (11,-10) node[circle,fill,inner sep=1.5pt, color=black](1){} -- (12.4,-11.4) node[circle,fill,inner sep=1.5pt, color=black](1){};

%\node at (11,-10.4) {$v_1$};\node at (9.25,-11.6) {$v_2$};\node at (12.75,-11.6) {$v_3$};\node at (11,-7.6) {$v_4$};

%\node at (11,-6.5) {$T_r'$};

\end{tikzpicture}

\caption{Two relatively hyperbolic right-angled Coxeter groups $G_{\Omega}$ and $G_{\Omega'}$ are not quasi-isometric because their peripheral subgroups are not quasi-isometric.}
\label{asixth}

\end{figure}

\begin{proof}[Proof of Theorem~\ref{sosonice}]
Let $T_1$ be the subgraph of $T$ induced by all vertices $v$ with $\Gamma_v$ a suspension of three points ($T_1$ is not necessarily connected). Let $\mathcal{T}$ be the set of all components of $T_1$. For each component $C$ in $\mathcal{T}$, let $\Gamma_C=\bigcup_{v\in V(C)}\Gamma_v$. Then, it is clear that $\Gamma_C$ is a $\mathcal{CFS}$ graph. Let $\JJ_1$ be the collection of all $\Gamma_C$ for all components $C$ in $\mathcal{T}$. Let $\JJ_2$ be the collection of all 4-cycles which are not part of any suspension of three vertices of $\Gamma$. Let $\JJ=\JJ_1\cup\JJ_2$. 

Let $n$ be the number of vertices $v$ of the tree $T$ such that $\Gamma_v$ is not a suspension of three points. We can prove the above proposition easily by induction on $n$ using Theorem \ref{dru} and Proposition \ref{nice1}. We leave the details to the reader.
\end{proof}

%\begin{prop}
%\label{hehe}
%The right-angled Coxeter group $G_\Gamma$ is relatively hyperbolic with respect to the collection $\PP=\set{G_J}{J\in \JJ}$.
%\end{prop}

%The following theorem is a direct result of Proposition \ref{hehe}. 

%\begin{thm}
%\label{thm:relativelycase}
%Let $\Gamma$ be a graph satisfying Standing Assumptions. There is a collection $\JJ$ of $\mathcal{CFS}$ subgraph of $\Gamma$ such that the right-angled Coxeter group $G_\Gamma$ is relatively hyperbolic with respect to the collection $\PP=\set{G_J}{J\in \JJ}$. 
%\end{thm}

\begin{rem}
In the above theorem, we remark that if the defining graph $\Gamma$ is $\mathcal{CFS}$, then the right-angled Coxeter group $G_\Gamma$ is trivially relatively hyperbolic with respect to itself.
\end{rem}

%We combine Theorem \ref{sosonice} with Theorems \ref{bh} and \ref{dt} to obtain the Corollary ???.

%\begin{cor}
%Let $\Gamma$ be a graph satisfying Standing Assumptions. Then the divergence of the right-angled Coxeter group $G_\Gamma$ is linear, or quadratic, or exponential.
%\end{cor}

\begin{exmp}
\label{newnew}
Let $\Omega$ and $\Omega'$ be two graphs as in Figure~\ref{asixth}. Then $\Omega$ (resp. $\Omega'$) contains subgraph $\Gamma$ (resp. $\Gamma'$) in Figure~\ref{afifth}. Moreover, group $G_{\Omega}$ (resp. $G_{\Omega'}$) is relatively hyperbolic with respect to group $G_\Gamma$ (resp. $G_{\Gamma'}$) by Theorem~\ref{th1}. However, two groups $G_\Gamma$ and $G_{\Gamma'}$ are not quasi-isometric by Example~\ref{niceexample}. Therefore, two groups $G_\Omega$ and $G_{\Omega'}$ are not quasi-isometric by Theorem 4.1 in \cite{MR2501302}.
\end{exmp}

\section{On generalization to certain high dimensional right-angled Coxeter groups}
\label{sec:generalization}

The main ingredient in the proof of quasi-isometric classification of $\mathcal{CFS}$ right-angled Coxeter groups with defining graphs satisfying Standing Assumptions is the decomposition of defining graphs as tree structures. Exploiting this idea we study a certain high dimensional right-angled Coxeter groups.

\subsection{Tree structure of the nerves of certain high dimensional RAAGs and RACGs}
\begin{figure}
\begin{tikzpicture}[scale=1.0]

\draw (3,3) node[circle,fill,inner sep=1.5pt, color=black](1){} -- (4,3) node[circle,fill,inner sep=1.5pt, color=black](1){}-- (5,3) node[circle,fill,inner sep=1.5pt, color=black](1){}-- (6,3) node[circle,fill,inner sep=1.5pt, color=black](1){}-- (7,3) node[circle,fill,inner sep=1.5pt, color=black](1){};

\node at (3,2) {$\swarrow$}; \node at (7,2) {$\searrow$};

\draw (0,0) node[circle,fill,inner sep=1.5pt, color=black](1){} -- (1,1) node[circle,fill,inner sep=1.5pt, color=black](1){}-- (2,0) node[circle,fill,inner sep=1.5pt, color=black](1){}-- (1,-1) node[circle,fill,inner sep=1.5pt, color=black](1){} -- (0,0) node[circle,fill,inner sep=1.5pt, color=black](1){};

\draw (-2,0) node[circle,fill,inner sep=1.5pt, color=black](1){} -- (1,1) node[circle,fill,inner sep=1.5pt, color=black](1){}-- (4,0) node[circle,fill,inner sep=1.5pt, color=black](1){}-- (1,-1) node[circle,fill,inner sep=1.5pt, color=black](1){} -- (-2,0) node[circle,fill,inner sep=1.5pt, color=black](1){};

\draw (-2,0) node[circle,fill,inner sep=1.5pt, color=black](1){} -- (-1,0) node[circle,fill,inner sep=1.5pt, color=black](1){}-- (0,0) node[circle,fill,inner sep=1.5pt, color=black](1){};

\draw (2,0) node[circle,fill,inner sep=1.5pt, color=black](1){} -- (3,0) node[circle,fill,inner sep=1.5pt, color=black](1){}-- (4,0) node[circle,fill,inner sep=1.5pt, color=black](1){};

\node at (-2.4,0) {$1$};\node at (0.4,0) {$1$}; \node at (1.6,0) {$1$};\node at (4.4,0) {$1$};
\node at (1,-1.4) {$0$};\node at (-0.8,-0.2) {$0$}; \node at (1,1.4) {$0$}; \node at (2.8,-0.2) {$0$}; \node at (1,2) {$K_1$};

\draw (8,0) node[circle,fill,inner sep=1.5pt, color=black](1){} -- (9,1) node[circle,fill,inner sep=1.5pt, color=black](1){}-- (10,0) node[circle,fill,inner sep=1.5pt, color=black](1){}-- (9,-1) node[circle,fill,inner sep=1.5pt, color=black](1){} -- (8,0) node[circle,fill,inner sep=1.5pt, color=black](1){};

\draw (6,0) node[circle,fill,inner sep=1.5pt, color=black](1){} -- (9,1) node[circle,fill,inner sep=1.5pt, color=black](1){}-- (12,0) node[circle,fill,inner sep=1.5pt, color=black](1){}-- (9,-1) node[circle,fill,inner sep=1.5pt, color=black](1){} -- (6,0) node[circle,fill,inner sep=1.5pt, color=black](1){};

\draw (6,0) node[circle,fill,inner sep=1.5pt, color=black](1){} -- (7,0) node[circle,fill,inner sep=1.5pt, color=black](1){}-- (8,0) node[circle,fill,inner sep=1.5pt, color=black](1){}-- (9,0) node[circle,fill,inner sep=1.5pt, color=black](1){}--(10,0) node[circle,fill,inner sep=1.5pt, color=black](1){};

\node at (5.6,0) {$1$};\node at (7.8,-0.2) {$1$}; \node at (10.3,0) {$1$};\node at (12.4,0) {$1$};
\node at (9,-1.4) {$0$};\node at (9,0.3) {$0$}; \node at (9,1.4) {$0$}; \node at (7.2,0.2) {$0$}; \node at (9,2) {$K_2$};

\node at (3,3.4) {$p_0$};\node at (4,2.6) {$f$}; \node at (5,3.4) {$p_1$};\node at (6,2.6) {$f$}; \node at (7,3.4) {$p_0$}; \node at (5,4.4) {$T$};

\end{tikzpicture}

\caption{Two non-isomorphic 1-dimensional flag complexes (triangle-free graphs) $K_1$ and $K_2$ in the collection $\KK_1$ can be constructed from the same tree $T$ in $\TT_1$.}
\label{aja'}
\end{figure}
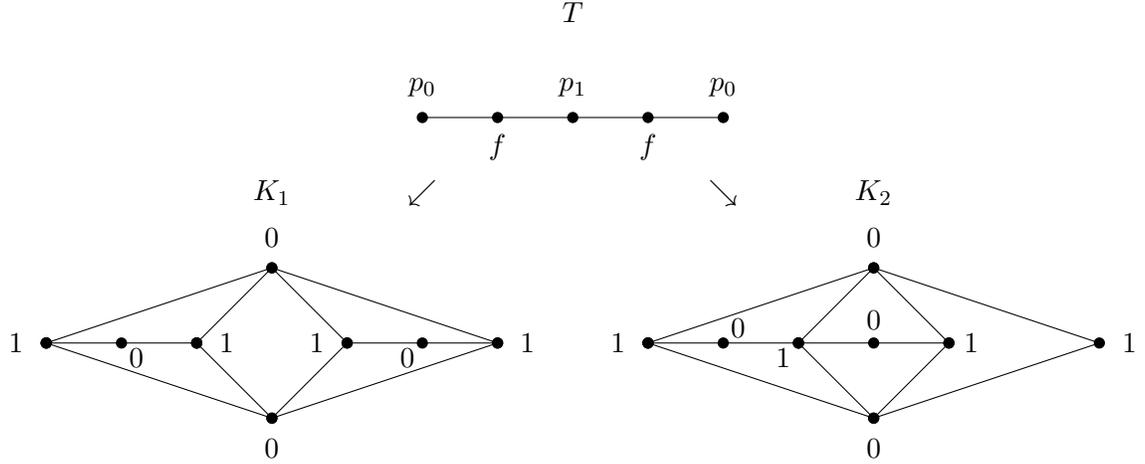

In this section, we introduce a collection of bipartite trees with certain structures and we will use this collection to construct two different collections of flag complexes. The first collection of flag complexes is used to describe high dimensional RAAGs introduced in \cite{MR2727658} and the second one is used to construct certain high dimensional RACGs. 

\begin{defn}
\label{d0}
For each integer $n\geq 1$ we define $\TT_n$ be the collection of $\pf$-bipartite tree $T$ satisfying the followings:
\begin{enumerate}
\item The valence of each $\f$-vertex is at least $2$ and at most $n+1$.
\item Each $\p$-vertex is labelled by a number in $\II_n=\{0,1,2,\cdots,n\}$ such that if $v$ and $v'$ are two different $\p$-vertices that are both adjacent to an $\f$-vertex, then $v$ and $v'$ are labelled by different numbers in $\II_n$.
\item Each $\p$-vertex $v$ is assigned to an integer $w(v)$, which we called the \emph{weight} of $v$, that is greater than or equal to the valence of $v$.
\end{enumerate}
\end{defn}

We now use each collection tree $\TT_n$ ($n\geq 1$) to construct some collection of flag complex.

\begin{defn}
\label{d1}
For each integer $n\geq 1$ and $T$ a $\pf$-bipartite tree in the collection $\TT_n$ we construct a flag complex $L\bigl(=L(T)\bigr)$ as follows:
\begin{enumerate}
\item Each $\p$-vertex $v$ of $T$ is associated to a flag complex $L_v=\Delta^{n-1}_v*B_v$, where $\Delta^{n-1}_v$ is an $(n-1)$-simplex, $B_v$ is the set of $w(v)$ distinct points, and $*$ denotes a join of two complexes. Moreover, if $v$ is labelled by a number $i$ in $\II_n$, then each point in $B_v$ is also labelled by $i$ and all $n$ vertices in $\Delta^{n-1}_v$ are labelled distinctly by elements in $\II_n-\{i\}$.
\item Each $\f$-vertex $u$ of $T$ is associated to an $n$-simplex $L_u$ and we label all $(n+1)$ vertices of $L_u$ distinctly by elements in $\II_n$.
\item If an $\f$-vertex $u$ is adjacent to a $\p$-vertex $v$, then we identify the $n$-simplex $L_u$ with an $n$-simplex in $L_v$ such that their vertex labels are matched (therefore, we have exactly $w(v)$ different ways for the identifying). Moreover, if $u$ and $u'$ are two different $\f$-vertices that are both adjacent to a $\p$-vertex $v$, then $L_{u}$ and $L_{u'}$ are identified to two different $n$-simplices of $L_v$. 
\end{enumerate}
\end{defn}

The proof for the following proposition is easy and we leave it to the reader.

\begin{prop}
Each tree $T$ in $\TT_n$ defines a unique flag complex $L(T)$ up to simplicial complex isomorphism.
\end{prop}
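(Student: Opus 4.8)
The plan is to prove the slightly stronger statement that $T$ determines $L(T)$ uniquely up to \emph{label-preserving} simplicial isomorphism; this immediately yields uniqueness up to isomorphism, and flagness is not really at issue, since each piece $L_v$ is flag and the pieces are glued along full simplices. The only genuine freedom in Definition~\ref{d1} lies in Step~(3): for each edge joining an $\f$-vertex $u$ to a $\p$-vertex $v$, one must choose which of the $w(v)$ top-dimensional simplices of $L_v$ is identified with $L_u$. So the whole task is to show that any two systems of such choices yield label-preserving isomorphic complexes.

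First I would record the local structure of a piece. For a $\p$-vertex $v$ labelled by $i$, the complex $L_v=\Delta^{n-1}_v*B_v$ has exactly $w(v)$ maximal ($n$-dimensional) simplices, namely $\sigma_b:=\Delta^{n-1}_v*\{b\}$ for $b\in B_v$, and the $n+1$ vertices of each $\sigma_b$ carry all the labels of $\II_n$ (the $n$ distinct labels on $\Delta^{n-1}_v$ together with the label $i$ of $b$); these are precisely the simplices along which $\f$-vertices are glued. The crucial observation is that the group of label-preserving automorphisms of $L_v$ is $\operatorname{Sym}(B_v)$: it fixes each vertex of $\Delta^{n-1}_v$ (whose labels are distinct) and permutes the equally-labelled points of $B_v$ freely. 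Consequently, any bijection between a subset of $B_v$ and an equinumerous subset of a second copy $B_v'$ extends to a label-preserving isomorphism $L_v\to L_v'$, because $w(v)=|B_v|=|B_v'|$.

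Then I would root $T$ at a $\p$-vertex (one exists, since $\f$-vertices have valence at least $2$) and build the isomorphism $\phi\colon L\to L'$ between two realizations by induction over the nested BFS subtrees, maintaining the property that for every $\p$-vertex $y$ already processed, $\phi$ sends the simplex of $L_y$ used by each child $\f$-vertex to the corresponding simplex of $L_y'$. At the root $r$, the Key Lemma supplies a label-preserving isomorphism $L_r\to L_r'$ carrying the (pairwise distinct) simplices used by $r$'s children to their counterparts in $L'$. Adding an $\f$-vertex then introduces no new simplices, since $L_u$ is identified with an already-present top simplex, and the required compatibility was arranged when its parent $\p$-vertex was processed. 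Adding a $\p$-vertex $y$ with $\f$-parent $u$ is the real step: $\phi$ is already defined on the $n$-simplex $L_u$, it forces where $\Delta^{n-1}_y$ and the distinguished point of $B_y$ go by labels, and the Key Lemma lets me extend $\phi$ over $L_y$ while simultaneously matching the simplices used by $y$'s children — possible because distinct children use distinct points of $B_y$ (and of $B_y'$) by the last clause of Step~(3), so the prescribed partial bijection of $B_y$-points is injective and completes to a bijection $B_y\to B_y'$.

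The main obstacle is exactly this simultaneous compatibility at each piece: in extending $\phi$ over $L_y$ one must respect the single gluing to the parent \emph{and} match all child gluings at once, so that the induction can proceed past $y$. This is resolved by the flexibility of $\operatorname{Sym}(B_y)$ together with the distinctness guaranteed by Step~(3). Finally, because $T$ is a tree, two pieces $L_v,L_{v'}$ can share at most one common $\f$-neighbor and hence overlap in at most one top simplex; thus no relations among vertices are created beyond those recorded by the $\f$-vertices, and the locally-defined isomorphisms assemble into a single global label-preserving isomorphism $\phi\colon L\to L'$.
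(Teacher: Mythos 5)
Your proposal is correct. Note that the paper offers no argument for this proposition at all --- it states that the proof ``is easy and we leave it to the reader'' --- so there is no proof of record to compare against; what you have written is a complete and valid way to discharge the exercise. Your key lemma is exactly the right crux: since the points of $B_v$ all carry the same label while the vertices of $\Delta^{n-1}_v$ carry distinct labels, the label-preserving isomorphisms $L_v\to L_v'$ are parametrized by bijections $B_v\to B_v'$, so any injective partial matching of gluing simplices (parent plus children, which are pairwise distinct by the last clause of Definition~\ref{d1}(3) and number at most $w(v)$ by Definition~\ref{d0}(3)) extends, and the rooted induction over $T$ goes through; the tree structure ensures pieces overlap in at most one top simplex, so the local extensions assemble. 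It is worth observing that your argument also explains, by contrast, why the analogous uniqueness \emph{fails} for Definition~\ref{d2}: there the set $A_v$ carries a cyclic ordering and gluings must use adjacent pairs, so the symmetry group of a piece is dihedral rather than the full symmetric group on $A_v$, the partial-matching extension step breaks down, and indeed the paper exhibits two non-isomorphic complexes in $\KK_1$ built from the same tree (Figure~\ref{aja'}). That your proof isolates precisely the hypothesis whose failure produces the paper's counterexample is good evidence it is the intended argument.
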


We now review collection of RAAG nerves studied in \cite{MR2727658}.

\begin{defn}[\cite{MR2727658}]
For each integer $n\geq 1$ we define $\LL_n$ to be the smallest class of n-dimensional simplicial complexes satisfying:
\begin{enumerate}
\item the $n$-simplex is in $\LL_n$;
\item if $L_1$ and $L_2$ are complexes in $\LL_n$ then the union of $L_1$ and $L_2$ along any $(n-1)$-simplex is in $\LL_n$.
\end{enumerate}
\end{defn}

The following proposition shows that each collection $\LL_n$ of RAAG nerves can be characterized by using the corresponding collection $\TT_n$ of bipartite trees.

\begin{prop}[\cite{MR2727658}]
For each integer $n\geq 1$, a complex $L$ belongs to the collection $\LL_n$ if and only if $L$ can be constructed from a tree $T$ in the collection $\TT_n$ as in Definition \ref{d1}. 
\end{prop}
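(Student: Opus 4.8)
The plan is to prove the two inclusions separately, keeping in mind the \emph{book} picture of $L(T)$: each $\p$-vertex $v$ contributes the subcomplex $L_v=\Delta^{n-1}_v*B_v$, a collection of $w(v)$ many $n$-simplices (\emph{pages}) all sharing the common $(n-1)$-simplex $\Delta^{n-1}_v$ (its \emph{spine}), and distinct books meet exactly along the pages indexed by the $\f$-vertices. The basic bookkeeping fact, immediate from the labelling, is that an $(n-1)$-simplex of $L(T)$ whose labels are $\II_n\setminus\{i\}$ can only be completed to an $n$-simplex by adjoining a vertex labelled $i$; hence each such $(n-1)$-simplex is the face of $w(v)$ pages when it is the spine of a $\p$-vertex $v$, and is a face of a single $n$-simplex otherwise, with the apex label forced to be $i$ throughout.

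For the inclusion $\{L(T):T\in\TT_n\}\subseteq\LL_n$ I would induct on the number of $\p$-vertices. A single book lies in $\LL_n$: start from one page (an $n$-simplex, hence in $\LL_n$) and attach the remaining $w(v)-1$ pages one at a time along the spine, each attachment being a union along an $(n-1)$-simplex, i.e.\ an instance of rule~(2). For the inductive step (at least two $\p$-vertices), observe that since every $\f$-vertex has valence at least $2$, every leaf of $T$ is a $\p$-vertex; pick such a leaf $v$, adjacent to an $\f$-vertex $u$. Deleting $v$---and, when $\deg_T(u)=2$, also deleting $u$---yields a tree $T'\in\TT_n$ with one fewer $\p$-vertex, the valence and weight conditions being only relaxed by the deletions. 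By induction $L(T')\in\LL_n$, and since $v$ is a leaf exactly one page of book $v$, namely $L_u$, already appears in $L(T')$; thus $L(T)$ is obtained from $L(T')$ by re-attaching the other $w(v)-1$ pages along the spine $\Delta^{n-1}_v$ (a face of $L_u$), each re-attachment being rule~(2). Hence $L(T)\in\LL_n$.

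For the reverse inclusion $\LL_n\subseteq\{L(T)\}$ I would use that $\LL_n$ is the \emph{smallest} class closed under the two operations and check that $\{L(T):T\in\TT_n\}$ contains the $n$-simplex and is closed under rule~(2). The $n$-simplex equals $L(T_0)$ for $T_0$ a single $\p$-vertex of weight $1$. Given $L_1=L(T_1)$ and $L_2=L(T_2)$ to be glued along an $(n-1)$-simplex $\tau$, relabel $T_2$ by a permutation of $\II_n$---which keeps $T_2\in\TT_n$ and only renames $L(T_2)$---so that the two copies of $\tau$ carry the same labels $\II_n\setminus\{i\}$; the apexes over $\tau$ are then forced on both sides to the label $i$, so the union really is of the form $L(T)$. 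I assemble $T$ from $T_1,T_2$ according to the role of $\tau$: if $\tau$ is a spine of both pieces, merge the two corresponding $\p$-vertices into one of weight $w(v_1)+w(v_2)$; if $\tau$ is a spine of one piece but not the other, raise that $\p$-vertex's weight by one and record the unique page over $\tau$ in the second piece as a new shared page joined to it by an edge (creating an $\f$-vertex for that page if it was not already one); if $\tau$ is a spine of neither piece, introduce a new $\p$-vertex of weight $2$ joined to the two pages over $\tau$. In each case $T_1$ and $T_2$ are connected at a single merged vertex or by a single new vertex or edge, so $T$ is again a tree, and $L(T)\cong L_1\cup_\tau L_2$ by construction.

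The main obstacle is verifying, in this last step, that the assembled $T$ genuinely lies in $\TT_n$. The delicate condition is the valence bound $(1)$: whenever $\tau$ is not a spine of a piece it is a face of a single $n$-simplex $\rho$ there, and by the forced-label fact at most $n$ of the $n+1$ faces of $\rho$ are spines, so $\rho$ has valence at most $n$ as an $\f$-vertex and the new spine-edge at $\tau$ keeps it within $n+1$. The label condition $(2)$ holds because at $\rho$ the only newly created spine-label is $i$, which was not a spine-label of $\rho$ before; and the weight condition $(3)$ is preserved because merging increments weights and valences in lockstep, while attaching a previously free page consumes precisely the slack $w(v')>\deg(v')$ that made the page free. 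Once these checks and the routine confirmation that the books of $T$ reproduce the pages and spines of $L_1\cup_\tau L_2$ are carried out, both inclusions are established.
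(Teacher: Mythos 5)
The paper contains no proof of this proposition to compare against: it is imported wholesale from Behrstock--Januszkiewicz--Neumann \cite{MR2727658}, and the authors simply cite it. Your proposal therefore supplies an argument where the paper has none, and the argument is correct. The two-step mechanism is the right one: for $\{L(T):T\in\TT_n\}\subseteq\LL_n$ you induct on the number of $\p$-vertices, using that every leaf of $T$ is a $\p$-vertex (since $\f$-vertices have valence at least $2$) and that pruning a leaf, together with its $\f$-neighbour when that neighbour has degree $2$, stays inside $\TT_n$; for the converse you exploit minimality of $\LL_n$ and verify closure of $\{L(T)\}$ under rule (2), and your three-case bookkeeping (merge two spines into one $\p$-vertex of weight $w(v_1)+w(v_2)$; spine against non-spine, raising a weight by one; non-spine against non-spine, inserting a new $\p$-vertex of weight $2$) is exactly the needed combinatorics. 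You also correctly isolate and resolve the delicate checks: the valence bound at an $\f$-vertex survives because a non-spine facet $\tau$ with label set $\II_n\setminus\{i\}$ forces every pre-existing $\p$-neighbour of the unique page over $\tau$ to carry a label different from $i$, so its valence was at most $n$ before the new edge; and the weight-versus-valence ledger balances because a free page exists in a book precisely when $w(v)$ exceeds the valence of $v$.

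Two points deserve more than your word ``immediate,'' though neither is a genuine gap. First, the bookkeeping fact underlying your whole case analysis --- that a non-spine $(n-1)$-simplex of $L(T)$ lies in exactly one page, while a spine $\Delta^{n-1}_v$ lies in exactly the $w(v)$ pages of its own book --- is not purely a labelling statement. It requires the intersection pattern of the tree of books: two pages of one book meet exactly in the spine, two books whose $\p$-vertices are at distance $2$ in $T$ meet exactly in the shared page $L_u$, books farther apart meet in dimension strictly less than $n-1$, and spines of distinct $\p$-vertices are distinct simplices (this last is where condition (2) of Definition \ref{d0} enters, since two $\p$-neighbours of a common $\f$-vertex have different labels and hence different facets of $L_u$ as spines). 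A short induction along $T$ proves all of this and should be recorded explicitly. Second, you implicitly assume $w(v)\geq 1$, e.g.\ when realizing the $n$-simplex as $L(T_0)$ for an isolated $\p$-vertex of weight $1$; Definition \ref{d0} as literally stated would permit weight $0$ on an isolated vertex, producing an $(n-1)$-dimensional complex outside $\LL_n$, so this positivity convention should be stated. With those two items written out, your proof is complete and self-contained, which is arguably more than the paper itself offers for this statement.
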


In \cite{MR2727658}, Behrstock-Januszkiewicz-Neumann study quasi-isometry classification of collection of RAAGs $\bigl\{A_L\bigr\}_{L\in \LL_n}$ for each $n\geq 1$. 

We now discuss a different collection of simplical complexes and we will use it to introduce certain high dimensional RACGs.

\begin{defn}
\label{d2}
For each integer $n\geq 1$ and $T$ a $\pf$-bipartite tree in the collection $\TT_n$ we construct a flag complex $K\bigl(=K(T)\bigr)$ as follows:
\begin{enumerate}
\item Each $\p$-vertex $v$ of $T$ is associated to a flag complex $K_v=S^{n-1}_v*A_v$, where $S^{n-1}_v$ is an $(n-1)$-sphere $S_0*S_0*\cdots*S_0$ ($n$ factors $S_0$) and $A_v$ is the set of $w(v)$ distinct points with some cyclic ordering. Moreover, if $v$ is labelled by a number $i$ in $\II_n$, then each point in $A_v$ is labelled by $i$ and each pair of nonadjacent vertices in $S^{n-1}_v$ is labelled by the same numbers in $\II_n-\{i\}$ such that two different pairs of nonadjacent vertices in $S^{n-1}_v$ are labelled by different numbers.
\item Each $\f$-vertex $u$ of $T$ is associated to an $n$-sphere $K_u=S_0*S_0*\cdots*S_0$ ($n+1$ factors $S_0$) and we label two nonadjacent vertices in $K_u$ by the same numbers in $\II_n$ such that two different pairs of nonadjacent vertices in $K_u$ are labelled by different numbers.
\item If an $\f$-vertex $u$ is adjacent to a $\p$-vertex $v$, then we identify the complex $K_u$ with a subcomplex in $K_v$ such that their vertex labels are matched. Moreover, if the $\p$-vertex $v$ is labelled by a number $i$ in $\II_n$, then two nonadjacent vertices of the complex $K_u$ labelled by $i$ are identified to two adjacent elements in the set $A_v$ of $K_v$ with respect to the cyclic ordering on $A_v$. Lastly, if $u$ and $u'$ are two different $\f$-vertices that are both adjacent to a $\p$-vertex $v$, then $K_{u}$ and $K_{u'}$ are identified to two different subcomplexes of $K_v$.
\end{enumerate}

Let $\KK_n$ be the collection of all flag complexes each of which can be constructed from some tree in $\TT_n$ as above.
\end{defn}

\begin{rem}
We remark that two non-isomorphic flag complexes in $\KK_n$ can be constructed from the same tree $T$ in $\TT_n$ (see Figure \ref{aja'}). In this paper, we study the coarse geometry including quasi-isometry classification of collection of RACGs $\bigl\{G_K\bigr\}_{K\in \KK_n}$ for each $n\geq 1$
\end{rem}

\subsection{Quasi-isometry classification of some high dimensional right-angled Artin groups}

\label{s4}

In this section, we briefly review the work of Behrstock-Januszkiewicz-Neumann on quasi-isometry classification of RAAGs with nerves in $\LL_n$. We first review the construction of Behrstock-Januszkiewicz-Neumann of geometric models for their RAAGs.

\begin{cons}

Fix a flag complex $L$ in $\LL_n$ and we assume that $L$ can be constructed from a tree $T$ in $\TT_n$ as in Definition \ref{d1}. For each $\p$-vertex $v$ of $T$ the vertex complex $L_v=\Delta^{n-1}_v*B_v$ defines a right-angled Artin group $A_{L_{v}}$ which is the product of a free group of rank $k=w(v)$ with $\ZZ^n$.    

Let $u_1, u_2,\cdots,u_k$ be all vertices of $B_v$. Giving the free group of rank $k$ induced by $B_v$ the redundant presentation
\[\langle u_0,u_1,\cdots, u_k|u_0u_1\cdots u_k=1\rangle\]
helps us consider this free group as the fundamental group of a $(k+1)$-punctured sphere $S_{k+1}$. Therefore, the right-angled Artin group $A_{L_{v}}$ is the fundamental group of the $(n+1)$-manifold $M_v=S_{k+1}\times T^n$. Moreover, $L_{v}$ is the union of $k$ $n$-simplices of the form $\Delta^{n-1}_v*b$ ($b\in B_v$) and the right-angled Artin subgroups induced by these simplices are the fundamental groups of $k$ of the $(k+1)$ boundary components of $M_v$.

When two vertex spaces $L_{v}$ and $L_{v'}$ of $L$ intersects in an $n$-simplex, this correspond to gluing the corresponding manifolds, $M_v$ and $M_{v'}$, along a boundary component by a \emph{flip} (i.e. a map that switches the base coordinate of one piece with one of the $S^{1}$ factors of the torus fiber of the other piece). Therefore, we can associate to any flag complex $L$ in $\LL_n$ a space $X_L$ with $\pi_1(X_L)=A_L$. Thus, the right-angled Artin group $A_L$ acts properly and cocompactly on the universal cover $\tilde{X}_L$ of $X_L$. We called $\tilde{X}_L$ \emph{geometric model} of the right-angled Artin group $A_L$.

\end{cons}

By the above construction, the space $\tilde{X}_L$ can be decomposed as copies of $\tilde{M}_v=\tilde{S}_{k+1}\times \RR^n$, which we called \emph{geometric pieces} with $\p$-vertex $v$ of $T$ and they are glued accordingly. Moreover, each geometric piece have boundaries which are not shared with other geometric pieces in the decomposition.

In \cite{MR2727658}, Behrstock-Januszkiewicz-Neumann use above geometric models to classify such right-angled Artin groups $A_L$ up to quasi-isometry. Before giving a complete quasi-isometry classification for their RAAGs, for each tree $T\in \TT_n$ Behrstock-Januszkiewicz-Neumann colored it using a color set $$C_1=\{c, b_0,b_1,b_2,\cdots,b_{n-1},b_n\}$$ in the identical way of labelling vertices of $T$. More precisely, we color each $\f$-vertex by $c$ and color each $\p$-vertex labelled by $i$ in $\II_n$ by $b_i$. Although it seems to be redundant to color the tree $T$ in the way that is identical to their vertex labels, but we still want to differentiate coloring and labeling so we can compare this coloring with another coloring on $T$ we will construct later. The following theorem talks about a complete quasi-isometry classification of the collection of RAAGs $\bigl\{A_L\bigr\}_{L\in \LL_n}$ for each $n\geq 1$.

\begin{thm}[Theorem 1.1 in \cite{MR2727658}]
Let $L$ and $L'$ be two flag complexes in $\LL_n$. Assume that $L$ and $L'$ are constructed from the corresponding trees $T$ and $T'$ as in Definition \ref{d1} and we color these trees by the color set $C_1$. Then two right-angled Artin groups $A_L$ and $A_{L'}$ are quasi-isometric if and only if two trees $T$ and $T'$ are bisimilar after possibly reordering the $\p$-colors by
an element of the symmetric group on $(n+1)$ elements. 
\end{thm}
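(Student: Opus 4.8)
The plan is to classify the geometric models $\tilde{X}_L$ and $\tilde{X}_{L'}$ up to quasi-isometry; since $A_L$ acts properly and cocompactly on $\tilde{X}_L$ (and likewise for $A_{L'}$), this is equivalent to classifying the groups. Recall that $\tilde{X}_L$ decomposes over the Bass--Serre tree $\tilde{T}$ of its graph-of-spaces structure---a tree that weakly covers, and is hence bisimilar to, $T$---into geometric pieces $\tilde{M}_v=\tilde{S}_{k+1}\times\RR^n$ glued to one another along top-dimensional flats $\RR^{n+1}$ by flips. The whole argument is a coarse-geometric analysis of this piece-and-flat decomposition, and the two implications require quite different techniques.

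For the \emph{if} direction I would first dispose of the color reordering: an element of the symmetric group on the $(n+1)$ labels $\II_n$ acts on each top-dimensional flat $\RR^{n+1}$ by permuting its $n+1$ coordinate directions, which is an isometry and assembles to a self-quasi-isometry of the model realizing the reordering. Thus we may assume the colorings already match. Bisimilarity then means $T$ and $T'$ weakly cover a common minimal colored tree $T_0$, so it suffices to show that a weak covering of colored trees induces a quasi-isometry of the associated geometric models. I would build this map piece by piece, sending each piece over $T$ to the isometric piece over its image in $T_0$; the defining edge-surjectivity of a weak covering guarantees that every bounding flat upstairs is matched with one downstairs, so the piecewise isometries glue into a quasi-isometry $\tilde{X}_L\to\tilde{X}_{T_0}$, and composing with the analogous map for $T'$ gives $\tilde{X}_L\simeq\tilde{X}_{L'}$.

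The \emph{only if} direction is the substantial one. Starting from a quasi-isometry $\phi\colon\tilde{X}_L\to\tilde{X}_{L'}$, the first step is a quasi-isometric rigidity statement: $\phi$ coarsely preserves the decomposition, carrying geometric pieces to geometric pieces and bounding flats to bounding flats. This rests on recognizing the product regions $\tilde{S}_{k+1}\times\RR^n$ and the flats $\RR^{n+1}$ purely coarsely, together with their coarse intersection pattern, generalizing the Kapovich--Leeb analysis of graph-manifold pieces to dimension $n+1$. Granting this, $\phi$ induces an adjacency-preserving map between the two decomposition trees; I would then check that it respects colors, since the $\f$/$\p$ distinction and the label of a piece are encoded in quasi-isometry invariants (whether a flat is shared, and the local count $w(v)$ of boundary flats), up to a single global permutation of $\II_n$ coming from how $\phi$ permutes the $n+1$ flat directions. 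Since the two decomposition trees are bisimilar to $T$ and $T'$ respectively and are now matched by a color-respecting quasi-isometry, transitivity of bisimilarity yields that $T$ and $T'$ are bisimilar after reordering $\p$-colors.

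The main obstacle is precisely this rigidity step: showing that an arbitrary quasi-isometry must send pieces to pieces and flats to flats requires delicate control of the product and flat structure (the higher-dimensional flats, their stabilizers, and their coarse intersections), and one must further verify that the induced permutation of the $(n+1)$ colors is \emph{global}---a single element of the symmetric group working uniformly across the entire tree, rather than varying piece to piece---which is exactly what forces the bisimilarity to hold only up to a global reordering.
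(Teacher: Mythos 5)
This statement is quoted verbatim from \cite{MR2727658} (Behrstock--Januszkiewicz--Neumann); the present paper contains no proof of it, only a summary of the geometric-model construction, so the comparison is against the BJN argument the paper invokes, and your outline is essentially that argument: models built from pieces $\tilde{S}_{k+1}\times\RR^n$ glued along $(n+1)$--flats by flips, the ``if'' direction via weak coverings of colored trees in the style of Behrstock--Neumann, and the ``only if'' direction via quasi-isometric rigidity of pieces and flats in the style of Kapovich--Leeb, with the global color permutation correctly identified as the delicate point. One small correction: in the ``if'' direction a weak covering need not preserve valence or weight, so a piece over $T$ and the piece over its image in $T_0$ are in general only quasi-isometric (the puncture numbers $k$ differ), not isometric; the map must therefore be assembled using the Behrstock--Neumann bi-Lipschitz flexibility of fattened trees with prescribed boundary behavior, rather than from piecewise isometries as you wrote.
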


%\section{Coarse geometry of some high dimensional right-angled Coxeter groups}
\subsection{Geometric models for high dimensional right-angled Coxeter groups with nerves in $\KK_n$ and quasi-isometry classification}
\label{s3}
In this section, we will construct a geometric model for the right-angled Coxeter group $G_K$ where $K$ is a flag complex in $\KK_n$. We then apply this geometric model and line by line argument as in Section~3 and Section~4 of \cite{MR2727658} to get the proof of Theorem~\ref{thm:quasihigherdim}. Before we construct a geometric model for $G_K$ we need a new coloring for each tree $T$ in $\TT_n$ as the following.

\underline{New coloring of each tree $T$ in $\TT_n$:}
Let $C_1$ be the color set given by Subsection~\ref{s4}. 
Let $$C_2=\{c, b_0,b_1,b_2,\cdots,b_{n-1},b_n, w_0,w_1,w_2,\cdots,w_{n-1},w_n\},$$ that contains the color set $C_1$. 

A new coloring is similar to the previous coloring except we will take vertex weight involved in the coloring process. We first color each $\f$-vertex of $T$ by $c$ in this coloring as we did with the previous coloring.

We color a $\p$-vertex as the following.
%However, the color of each $\p$-vertex in this way also depends on its weight. More precisely,
Assume that a $\p$-vertex $v$ is labelled by a number $i$ in $\II_n$. We color $v$ by $b_i$ if the weight of $v$ is strictly greater than its valence and we color $v$ by $w_i$ if the weight of $v$ is the same as its valence. Therefore, two different ways of coloring ($C_1$ and $C_2$) are identical if and only if the weight of each $\p$-vertex is strictly greater than its valence. 

\underline{Construction of geometric models:}

We now construct geometric models for our RACGs. Let $K$ be a flag complex in $\KK_n$ and we assume that $K$ can be constructed from a tree $T$ in $\TT_n$ as in Definition \ref{d2}. Let $\Sigma_K$ be the Davis complex of the right-angled Coxeter group $G_K$. We now construct a ``fatten'' Davis complex $Y_K$ on which $G_K$ acts properly and cocompact on.

For each $\p$-vertex $v$ of $T$ we have the associated flag complex $K_v=S^{n-1}_v*A_v$, where $S^{n-1}_v$ is an $(n-1)$-sphere $S_0*S_0*\cdots*S_0$ and $A_v$ is the set of $w(v)$ distinct points with some cyclic ordering. Assume that elements in $A_v$ are labelled cyclically by $a_i$ where $i\in \ZZ_{n}$ ($n=w(v)$). The Davis complex of the right-angled Coxeter group $G_{A_v}$ is an $n$--regular tree $T_n$ with edges labelled by $a_i$. We first construct a ``fattened tree'' $F(T_n)$ of $T_n$ as follows:

We replace each vertex of $T_n$ by a regular $n$--gon with sides labelled cyclically by $\bar{a}_i$ and we also assume the length side of the $n$--gon is $1/2$. We replace each edge $E$ labelled by $a_i$ by a strip $E\times[-1/4,1/4]$. We label each side of length $1$ of the strip $E\times[-1/4,1/4]$ by $a_i$ and we identify the edge $E$ to $E\times\{0\}$ of the strip. Moreover, if $w$ is an endpoint of the edge $E$ of $T_n$, then the edge $\{w\}\times[-1/4,1/4]$ is identified to the side labelled by $\bar{a}_i$ of the $n$--gon that replaces $w$. This is clear that the right-angled Coxeter group $G_{A_v}$ acts properly and cocompactly on the fattened tree $F(T_n)$ as an analogous way its acts on the Davis complex $T_n$. Moreover, the fattened tree $F(T_n)$ is a 2-dimensional manifold and each boundary component is a line which is labelled concatenatively by $\{a_{i-1},a_i\}$ for some $i\in \ZZ_n$.

The Davis complex $\Sigma_{S^{n-1}_v}$ of the right-angled Coxeter group $G_{S^{n-1}_v}$ is isometric to $\RR^n$. Let $P_v=\Sigma_{S^{n-1}_v}\times F(T_n)$. Then the right-angled Coxeter group $G_{K_v}$ acts properly and cocompactly on $P_v$ obviously. Moreover, $P_v$ is an $(n+1)$-manifold and each boundary components of $P_v$ are copies of the Davis complexes of right-angled Coxeter groups $G_{S^{n-1}_v*\{a_{i-1},a_i\}}$ ($i\in\ZZ_n$).

For each $\f$-vertex $u$ that is adjacent to a $\p$-vertex $v$ the flag complex $K_u$ is identified to a subcomplex of the form $S^{n-1}_v*\{a_{i-1},a_i\}$ in $K_v=S^{n-1}_v*A_v$. Therefore, each boundary component of $P_v$ that is a copy of the Davis complex of right-angled Coxeter group $G_{S^{n-1}_v*\{a_{i-1},a_i\}}$ can also be considered as a copy of the Davis complex $\Sigma_{K_u}$. Thus, using the Bass-Serre tree $\tilde{T}$ of the decomposition of the right-angled Coxeter group $G_K$ as the tree $T$ of subgroups we can form a space $Y_K$ by gluing copies of each space $P_v$ appropriately and we obtain a proper, cocompact action of $G_K$ on the new space $Y_K$. We call each copy of $P_v$ for some $\p$-vertex $v$ of $T$ a \emph{geometric piece of type $v$} and we call the space $Y_K$ a \emph{geometric model} for the right-angled Coxeter group $G_K$. 

\begin{rem}
\begin{enumerate}
\item We observe that for each $\p$-vertex $v$ a geometric piece of type $v$ has boundary components which are not shared with other geometric pieces if and only if the weight of the vertex $v$ is strictly greater than its valence (i.e. the vertex $v$ is colored by some color $b_i$ when we color the tree $T$ using color set $C_2$ as above).
\item We remark that the geometric model $Y_K$ of a right-angled Coxeter group $G_K$ ($K\in \KK_n$) have a similar structure with the geometric model $\tilde{X}_L$ of a right-angled Artin group $A_L$ ($L\in \LL_n$) excepts $Y_K$ may contains geometric pieces such that all its boundary components are shared with other geometric pieces. 
\end{enumerate}
\end{rem}

\begin{proof}[Proof of Theorem~\ref{thm:quasihigherdim}]
We use the geometric model $Y_K$ in the construction above for each right-angled Coxeter group $G_K$ ($K\in\KK_n$) and line by line argument as in Section 3 and Section 4 of \cite{MR2727658} to get the proof.
\end{proof}
%\begin{thm}
%Let $K$ and $K'$ be two flag complexes in $\KK_n$ and we assume that $K$ and $K'$ can be constructed from two trees $T$ and $T'$ in $\TT_n$. We also assume that two trees $T$ and $T'$ are colored by the color set $C_2$. Then two right-angled Coxeter groups $G_K$ and $G_{K'}$ are quasi-isometric if and only if two colored trees $T$ and $T'$ are bisimilar after possibly reordering the $\p$-colors by an element of the symmetric group on $2n+2$ elements.
%\end{thm}

We can also use an almost identical proof as in Sections 3 and 4 in \cite{MR2727658} to prove the following theorem:

\begin{thm}
\label{compare}
Let $L$ be a flag complex in $\LL_n$ and let $K$ be a flag complex in $\KK_n$. Assume that $L$ and $K$ can be constructed from two trees $T_L$ and $T_K$ in $\TT_n$ respectively. We color the tree $T_L$ by the color set $C_1$ and the tree $T_K$ by the color set $C_2$. Then RAAG $A_L$ and RACG $G_K$ are quasi-isometric if and only if $\p$-vertices of $T_K$ are only colored by colors in the set $C_1$ and two colored trees $T_L$ and $T_K$ are bisimilar after possibly reordering the $\p$-colors by an element of the symmetric group on $n+1$ elements.
\end{thm}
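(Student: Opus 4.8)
The plan is to run the Behrstock--Januszkiewicz--Neumann argument of Sections~3 and~4 of \cite{MR2727658} on the two geometric models side by side, the RAAG model $\tilde{X}_L$ and the RACG model $Y_K$, and to isolate the one extra feature that distinguishes them. The unifying observation is that every geometric piece of either model is a fattened tree crossed with $\RR^n$, glued to its neighbors along flats; the $\p$-label $i\in\II_n$ records which coordinate direction is ``special,'' while the only genuinely new phenomenon in the Coxeter setting is that a piece may have \emph{all} of its boundary flats shared with neighboring pieces. This is exactly the distinction recorded by the colors of $C_2$: a $\p$-vertex $v$ is colored $b_i$ precisely when its piece has at least one free (unshared) boundary flat, and $w_i$ precisely when every boundary flat of the piece is shared. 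In the RAAG model every piece $\tilde{S}_{w(v)+1}\times\RR^n$ carries the extra puncture coming from the redundant generator, so it always has a free boundary; hence $T_L$ never uses a $w_i$ color.

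For the easy implication I would assume that every $\p$-vertex of $T_K$ is colored in $C_1$ and that $T_L$ and $T_K$ are bisimilar after reordering the $\p$-colors by an element of the symmetric group on $n+1$ elements. Then every geometric piece of $Y_K$ has a free boundary flat, so $Y_K$ has exactly the structural shape of a RAAG model, and the color-matched bisimilarity lets me assemble a quasi-isometry $\tilde{X}_L\to Y_K$ exactly as in \cite{MR2727658}: on each piece use a quasi-isometry of the two fattened trees matching the valence-many gluing boundaries and absorbing the (at least one) free boundary on each side, crossed with the identity on the $\RR^n$ factor, and check compatibility along the shared flats. Since the gluing data recorded by the colors and valences agree, this is the verbatim assembly of \cite{MR2727658}.

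The forward direction is where the work lies. Assuming $A_L$ and $G_K$ are quasi-isometric, the coarse analysis of Sections~3 and~4 of \cite{MR2727658} (the same analysis underlying Theorem~\ref{thm:quasihigherdim}) shows that any quasi-isometry of the models coarsely preserves the decomposition into fattened-tree-by-$\RR^n$ pieces, sends boundary flats to boundary flats, and hence induces a bisimilarity of the colored Bass--Serre trees, up to reordering the $\p$-labels by the symmetric group on $n+1$ elements. The extra point I must supply is that this induced bisimilarity respects the $b_i$ versus $w_i$ distinction. For this I would argue that ``having a free boundary flat'' is a quasi-isometry invariant of a piece: a shared boundary flat is a \emph{cut} flat that coarsely separates two pieces, whereas a free boundary flat is exposed and bounds only one piece, and a quasi-isometry must carry separating flats to separating flats and exposed flats to exposed flats. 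Consequently a $w_i$-piece of $Y_K$, all of whose boundary flats separate, cannot correspond to any RAAG piece, each of which is exposed along its extra puncture.

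Combining the two directions, since $T_L$ is colored entirely within $C_1$, a color-respecting bisimilarity with $T_K$ forces every $\p$-vertex of $T_K$ to be colored in $C_1$ as well, which is the first asserted condition, and the bisimilarity itself (after the $\p$-reordering) is the second. The main obstacle is precisely the invariance claim in the forward direction, namely that a quasi-isometry must preserve the exposed/separating dichotomy of boundary flats and hence the $b_i/w_i$ coloring; everything else is the BJN machinery applied, almost without change, to the two models at once.
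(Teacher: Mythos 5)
Your proposal is correct and follows essentially the same route as the paper, whose entire proof of Theorem~\ref{compare} is the remark that one applies the arguments of Sections~3 and~4 of \cite{MR2727658} to the two geometric models $\tilde{X}_L$ and $Y_K$, the only new feature being that a piece of $Y_K$ may have all boundary flats shared, which is precisely the $b_i$ versus $w_i$ color distinction you isolate. The extra point you supply --- that the BJN machinery sends pieces to pieces and boundary flats to boundary flats, so the presence of a free (unshared) boundary flat is a quasi-isometry invariant of a piece, forcing $w_i$-pieces to have no RAAG counterpart --- is exactly the content glossed by the paper's phrase ``almost identical proof.''
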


\section{Strongly quasiconvex subgroups of $\mathcal{CFS}$ right-angled Coxeter groups}
\label{sqc}

\subsection{Background on strongly quasiconvex subgroups and stable subgroups}

In this subsection, we review two notions of quasiconvex subgroups and stable subgroups. We also recall some results related to these two notions. 
\begin{defn}
A subset $A$ of a geodesic metric space $X$ is \emph{Morse} if for every $K \geq 1,C \geq 0$ there is some $M = M(K,C)$ such that every $(K,C)$--quasi–geodesic with endpoints on $A$ is contained in the $M$--neighborhood of $A$. We call the function $M$ a \emph{Morse gauge}.
\end{defn}

\begin{defn}
Let $\Phi\!:\!A\to X$ be a quasi-isometric embedding between geodesic metric spaces. We say $A$ is \emph{strongly quasiconvex} in $X$ if the image $\Phi(A)$ is Morse in $X$. We say $A$ is \emph{stable} in $X$ if for any $K\geq 1$, $L \geq 0$ there is an $R=R(K,L)\geq 0$ so that if $\alpha$ and $\beta$ are two $(K,L)$--quasi-geodesics with the same endpoints in $\Phi(A)$, then the Hausdorff distance between $\alpha$ and $\beta$ is less than $R$. 
\end{defn}

Note that when we say $A$ is strongly quasiconvex (stable) in $X$ we mean that $A$ is strongly quasiconvex (stable) in $X$ with respect to a particular quasi-isometric embedding $\Phi\!:\!A\to X$. Such a quasi-isometric embedding will always be clear from context, for example an undistorted subgroup $H$ of a finitely generated group $G$. We now recall the concepts of strongly quasiconvex subgroups and stable subgroups.

\begin{defn}
Let $G$ be a finite generated group and $S$ an arbitrary finite generating set of $G$. Let $H$ be a finite generated subgroup of $G$ and $T$ an arbitrary finite generating set of $H$. The subgroup $H$ is \emph{undistorted} in $G$ if the natural inclusion $i\!:H\to G$ induces a quasi-isometric embedding from the Cayley graph $\Gamma(H,T)$ into the Cayley graph $\Gamma(G,S)$. We say $H$ is \emph{stable} in $G$ if $\Gamma(H,T)$ is stable in $\Gamma(G,S)$. 

We remark that stable subgroups were proved to be independent of the choice of finite generating sets (see Section 3 in \cite{MR3426695}).
\end{defn}

\begin{defn}
Let $G$ be a finite generated group and $H$ a subgroup of $G$. We say $H$ is \emph{strongly quasiconvex} in $G$ if $H$ is a Morse subset in the Cayley graph $\Gamma(G,S)$ for some (any) finite generating set $S$.
\end{defn}

We remark that strongly quasiconvex subgroups were proved to be independent of the choice of finite generating sets of the ambient groups. Moreover, strongly quasiconvex subgroups are all finitely generated and undistorted. We refer the reader to the work of the second author in Section 4 in \cite{T2017} for more details. The following proposition tells us a relation between strongly quasiconvex subgroups and stable subgroups.

\begin{prop}[Proposition 4.3 in \cite{T2017}]
\label{p3}
Let $G$ be a finitely generated group. A subgroup $H$ of $G$ is stable if and only if $H$ is strongly quasiconvex and hyperbolic.
\end{prop}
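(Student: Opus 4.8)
The plan is to prove the two implications separately, using throughout the facts recorded in the excerpt that both stable and strongly quasiconvex subgroups are finitely generated and undistorted. In particular the inclusion induces a quasi-isometric embedding $\Gamma(H,T)\hookrightarrow\Gamma(G,S)$ with some fixed constants $(\lambda,\epsilon)$, which is the bridge between the intrinsic geometry of $H$ and quasi-geodesics living in $G$.

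First I would show that stability implies strong quasiconvexity and hyperbolicity. For strong quasiconvexity, let $\gamma$ be a $(K,C)$-quasi-geodesic in $\Gamma(G,S)$ with endpoints $x,y\in H$. Undistortedness lets me take a geodesic from $x$ to $y$ in $\Gamma(H,T)$ and push it forward to a $(\lambda,\epsilon)$-quasi-geodesic $\beta$ whose image lies entirely in $H$. Stability applied to the pair $\gamma,\beta$ (both viewed as $(K',C')$-quasi-geodesics with $K'=\max(K,\lambda)$, $C'=\max(C,\epsilon)$) gives a bound $R=R(K',C')$ on their Hausdorff distance; since $\beta\subset H$, this shows $\gamma$ lies in the $R$-neighborhood of $H$, so $H$ is Morse. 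For hyperbolicity, I would show that geodesic bigons in $\Gamma(H,T)$ are uniformly thin and then invoke the bigon criterion for hyperbolicity of Papasoglu. Given two geodesics in $\Gamma(H,T)$ with the same endpoints, their images are $(\lambda,\epsilon)$-quasi-geodesics in $\Gamma(G,S)$ with endpoints in $H$, hence uniformly Hausdorff close in $\Gamma(G,S)$ by stability; pulling this back through the quasi-isometric embedding bounds their Hausdorff distance in $\Gamma(H,T)$. Thus bigons in $\Gamma(H,T)$ are uniformly thin and $H$ is hyperbolic.

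Conversely, assume $H$ is strongly quasiconvex and hyperbolic, and let $\alpha,\beta$ be two $(K,L)$-quasi-geodesics in $\Gamma(G,S)$ sharing endpoints in $H$. Strong quasiconvexity confines both $\alpha$ and $\beta$ to the $M$-neighborhood of $H$, where $M=M(K,L)$. Here I would use the coarse closest-point projection $\pi\colon\Gamma(G,S)\to H$ associated to a Morse subset, which is coarsely Lipschitz and carries quasi-geodesics that stay near $H$ to quasi-geodesics in $\Gamma(H,T)$ with controlled constants (these properties of Morse subsets are established in \cite{T2017}). The projections $\pi\circ\alpha$ and $\pi\circ\beta$ are then quasi-geodesics in the hyperbolic space $\Gamma(H,T)$ with coarsely the same endpoints, so the Morse lemma inside $\Gamma(H,T)$ forces them to be uniformly close in $H$; since $\alpha$ and $\beta$ lie within $M$ of their projections, they are uniformly Hausdorff close in $\Gamma(G,S)$, which is exactly stability.

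The main obstacle is the reverse direction, specifically establishing that the closest-point projection onto a Morse subset is coarsely well defined and sends nearby quasi-geodesics to quasi-geodesics with uniform constants; this is precisely where the strength of the Morse condition, as opposed to ordinary quasiconvexity, is needed, and it is the technical heart of the argument. A secondary and more routine point is the bigon-to-hyperbolicity step in the forward direction, where one must track how the quasi-isometry constants $(\lambda,\epsilon)$ of the embedding interact with the stability gauge when transporting thin bigons from $\Gamma(G,S)$ back to $\Gamma(H,T)$.
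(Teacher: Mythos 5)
Your statement is quoted in this paper from \cite{T2017} without proof, so there is no in-paper argument to compare against; your proposal reproduces what is essentially the standard argument of \cite{T2017} itself, and it is correct: stability applied to a given quasi-geodesic together with the pushed-forward image of an $H$-geodesic yields the Morse property, the uniform thinness of bigons pulled back through the quasi-isometric embedding gives hyperbolicity via Papasoglu's criterion, and conversely the nearest-point approximation of quasi-geodesics confined to the $M$-neighborhood of $H$, followed by the Morse lemma in the hyperbolic group $H$, gives stability. Two harmless imprecisions worth tightening in a write-up: the pushed-forward geodesic lies only uniformly close to $H$ (its vertices are in $H$, the interpolating segments need not be), and the closest-point projection is coarsely Lipschitz only as you actually use it --- on points within $M$ of $H$, where $d(\pi x,\pi y)\leq 2M+d(x,y)$ and undistortedness converts bounded jumps in $\Gamma(G,S)$ into bounded jumps in $\Gamma(H,T)$ --- not globally on $\Gamma(G,S)$.
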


The following proposition gives us a way to get another quasiconvex subgroup
from a strongly quasicovex subgroup.

\begin{prop}[Proposition 4.11 in \cite{T2017}]
\label{pp3}
Let $G$ be a finitely generated group and $A$ undistorted subgroup of $G$. If $H$ is a strongly quasiconvex subgroup of $G$, then $H_1=H\cap A$ is a strongly quasiconvex subgroup of $A$. In particular, $H_1$ is finitely generated and undistorted in $A$.
\end{prop}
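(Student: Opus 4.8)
The plan is to verify the definition of Morse (equivalently, strongly quasiconvex) directly for $H_1=H\cap A$ inside the Cayley graph $\Gamma(A,T)$; once this is done, the assertions that $H_1$ is finitely generated and undistorted in $A$ follow at once from the general fact, recalled above, that strongly quasiconvex subgroups are automatically finitely generated and undistorted. So I fix finite generating sets $S$ of $G$ and $T$ of $A$, and I take an arbitrary $(K,C)$--quasigeodesic $\alpha$ in $\Gamma(A,T)$ with both endpoints in $H_1$. My goal is to find a constant $D$, depending only on $K$ and $C$, with $\alpha\subseteq\nbd{H_1}{D}$ in $\Gamma(A,T)$.

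First I would push $\alpha$ into $G$. Since $A$ is undistorted, the inclusion $\Gamma(A,T)\to\Gamma(G,S)$ is a quasi-isometric embedding, so $\alpha$ is also a $(K',C')$--quasigeodesic in $\Gamma(G,S)$, where $K',C'$ depend only on $K,C$ and the distortion constants. Its endpoints still lie in $H_1\subseteq H$. Now I invoke the hypothesis that $H$ is strongly quasiconvex, i.e.\ Morse, in $\Gamma(G,S)$: there is $M=M(K',C')$ with $\alpha\subseteq\nbd{H}{M}$. Viewing $\alpha$ as a subset of $A$, this gives $\alpha\subseteq \nbd{H}{M}\cap A$.

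The crux is then to transfer this back to $A$, i.e.\ to show that $\nbd{H}{M}\cap A$ is contained in a bounded $T$--neighbourhood of $H_1$, with the bound depending only on $M$. For this I would argue algebraically. If $x\in\nbd{H}{M}\cap A$, choose $h\in H$ with $d_S(x,h)\le M$; then $b:=x^{-1}h$ lies in the finite ball $B(e,M)$ of $\Gamma(G,S)$ and satisfies $xb\in H$. For each fixed $b\in B(e,M)$ the set $\{x\in A: xb\in H\}=Hb^{-1}\cap A$ is either empty or a single right coset $H_1x_b$ of $H_1$ in $A$: indeed, if $x_1b,x_2b\in H$ with $x_1,x_2\in A$ then $x_1x_2^{-1}\in H\cap A=H_1$. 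Hence $\nbd{H}{M}\cap A$ meets only the finitely many cosets $H_1x_b$ with $b\in B(e,M)$, and every element of $H_1x_b$ is at $T$--distance $d_T(x_b,H_1)$ from $H_1$. Setting $D:=\max_b d_T(x_b,H_1)<\infty$ yields $\nbd{H}{M}\cap A\subseteq\nbd{H_1}{D}$ in $\Gamma(A,T)$, and therefore $\alpha\subseteq\nbd{H_1}{D}$. Since $D$ depends only on $M$, hence only on $K,C$, this shows $H_1$ is Morse in $\Gamma(A,T)$, completing the argument.

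I expect the last step --- upgrading ``$\alpha$ stays near $H$ in $G$'' to ``$\alpha$ stays near $H\cap A$ in $A$'' --- to be the main conceptual obstacle, since the Morse hypothesis only controls distance to $H$ in the ambient metric, whereas the conclusion concerns distance to the intersection $H\cap A$ measured in $A$. The coset computation above is what bridges this gap, and it is instructive that it requires no Morse or even geometric hypothesis on $A$ itself: the finiteness of $B(e,M)$ forces $\nbd{H}{M}\cap A$ into finitely many $H_1$--cosets purely algebraically. Thus the two hypotheses enter in clearly separated places --- undistortion of $A$ to pass into $G$, and strong quasiconvexity of $H$ to confine $\alpha$ to $\nbd{H}{M}$ --- while the remaining work is the elementary but essential coset lemma.
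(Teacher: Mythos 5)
This proposition is imported by the paper from \cite{T2017} (Proposition 4.11) and no proof of it appears in the paper itself, so there is no internal proof to compare against. Your argument is correct and is essentially the standard one used in the cited source: undistortion of $A$ promotes the quasigeodesic to a $(K',C')$--quasigeodesic in $\Gamma(G,S)$, the Morse gauge of $H$ confines it to $\nbd{H}{M}$, and your coset lemma --- that $\nbd{H}{M}\cap A$ meets only the finitely many cosets $H_1x_b$ indexed by the finite ball $B(e,M)$, since $x_1b,x_2b\in H$ with $x_1,x_2\in A$ forces $x_1x_2^{-1}\in H\cap A=H_1$ --- correctly pulls the bound back into $\Gamma(A,T)$ with a constant depending only on $K,C$.
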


We now discuss the height and the width of subgroups.

\begin{defn}
Let $G$ be a group and $H$ a subgroup.
\begin{enumerate}
\item Conjugates $g_1Hg_1^{-1}, \cdots g_kHg_k^{-1} $ are \emph{essentially distinct} if the cosets $g_1H,\cdots,g_kH$ are distinct.
\item $H$ has height at most $n$ in $G$ if the intersection of any $(n+1)$ essentially distinct conjugates is finite. The least $n$ for which this is satisfied is called the height of $H$ in $G$.
\item The \emph{width} of $H$ is the maximal cardinality of the set \[\{g_iH:\abs{g_iH{g_i}^{-1} \cap g_jH{g_j}^{-1}}=\infty\},\] where $\{g_iH\}$ ranges over all collections of distinct cosets.
\end{enumerate}
\end{defn}

We note that finite subgroups and subgroups of finite index have finite
height and width, and infinite normal subgroups of infinite index have in-
finite height and width. Hence, the next proposition states that strongly
quasiconvex subgroups are far from being normal.

\begin{thm}[Theorem 1.2 in \cite{T2017}]
\label{lacloi1}
Let $G$ be a finitely generated group and let $H$ be a strongly quasiconvex subgroup. Then $H$ has finite height and
finite width.
\end{thm}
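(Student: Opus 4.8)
The plan is to adapt the Gitik--Mitra--Rips--Sageev argument for quasiconvex subgroups of hyperbolic groups to the Morse setting, replacing the use of thin triangles by the uniform Morse gauge of the cosets of $H$. The one observation that makes everything uniform is this: since left translation by any $g\in G$ is an isometry of the Cayley graph $\Gamma(G,S)$ carrying $H$ onto $gH$, every coset $gH$ is Morse with the same Morse gauge $M=M_H$ as $H$. Hence the whole family of cosets of $H$ is \emph{uniformly} Morse, and I will only need the gauge controlling geodesics: any geodesic with endpoints on a coset $gH$ lies in $\mathcal{N}_{M}(gH)$.

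For finite width, I would suppose $g_1H,\dots,g_kH$ are distinct cosets that are pairwise infinitely related, i.e.\ $g_iHg_i^{-1}\cap g_jHg_j^{-1}$ is infinite for all $i\neq j$, and show $k$ is bounded. Each such infinite intersection is an infinite subgroup, so, $G$ being finitely generated, it contains elements $x$ of arbitrarily large word length; since $x\in g_iHg_i^{-1}$ we have $g_i,\,xg_i\in g_iH$, so a geodesic from $g_i$ to $xg_i$ stays in $\mathcal{N}_{M}(g_iH)$, and (up to bounded error coming from the lengths $\abs{g_i},\abs{g_j}$) the same geodesic fellow-travels $g_jH$ as well. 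Thus the cosets $g_iH$ and $g_jH$ come within a uniform distance $D=D\bigl(M,\max_i\abs{g_i}\bigr)$ of each other along an \emph{unbounded} set. The core step is then a bounded-packing estimate: there is a bound $N=N(M)$ on the number of distinct cosets of $H$ that can pairwise fellow-travel within a fixed distance along unbounded sets, which yields width $\le N$.

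Finite height is handled by the same machinery. Given $n+1$ essentially distinct conjugates with a common infinite intersection $K$, I would conjugate so that one of them is $H$ itself, so that $K\le H$ is infinite and supplies a sequence $k_m\in K$ with $\abs{k_m}\to\infty$. Because $K\le g_iHg_i^{-1}$ for every $i$, the geodesics $[1,k_m]$ simultaneously fellow-travel all of the cosets $g_iH$; bounding how many uniformly Morse cosets can simultaneously stay close to a single geodesic segment, again via the packing estimate together with the local finiteness of $\Gamma(G,S)$, bounds $n$.

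The hard part will be the quantitative packing/divergence estimate underlying both bounds. In the hyperbolic case it is immediate from $\delta$-thinness, but Morse geodesics may fellow-travel for arbitrarily long before diverging, so I cannot bound the \emph{length} of the common segment and must instead bound the \emph{number} of mutually close cosets directly. The key is that once two distinct uniformly Morse cosets begin to separate, the Morse gauge $M$ forces this divergence to proceed at a uniform rate independent of the cosets; converting this qualitative Morse condition into a uniform multi-coset divergence bound, and feeding it into the finiteness of balls in $\Gamma(G,S)$, is the crux of the argument.
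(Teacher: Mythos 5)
You should first note that this paper contains no proof of Theorem~\ref{lacloi1}: it is quoted verbatim from \cite{T2017}, so the only proof to measure your attempt against is the second author's original one, whose entire difficulty lies in making all constants independent of the configuration of conjugates (cf.\ the characterization of strong quasiconvexity by completely superlinear lower relative divergence in \cite{T2017}). Your preliminary observations are correct: left translations are isometries, so every coset $gH$ is Morse with the same gauge $M$ as $H$; an infinite subgroup of a finitely generated group contains elements of arbitrarily large word length; and if $k\in g_iHg_i^{-1}$ then $g_i, kg_i\in g_iH$, so geodesics between them lie in $\mathcal{N}_M(g_iH)$, and the geodesic $[1,k]$ lies in $\mathcal{N}_{M(1,4L)}(g_iH)$ with $L=\abs{g_i}$ by concatenating access geodesics. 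This is the standard Gitik--Mitra--Rips--Sageev template, and up to this point it is fine.

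The gap is exactly where you stop, and it is twofold. First, your bookkeeping is internally inconsistent on uniformity: you derive a fellow-traveling distance $D=D\bigl(M,\max_i\abs{g_i}\bigr)$, but then invoke a packing bound $N=N(M)$ for cosets ``within a fixed distance'' --- the distance is not fixed, it grows with the coset representatives. The honest pigeonhole (distinct cosets meeting a ball $B(p,D)$ number at most $\abs{B(D)}$, since distinct cosets contribute distinct group elements) then bounds each individual configuration by $\abs{B\bigl(D(M,\max_i\abs{g_i})\bigr)}$, a quantity that blows up as the representatives do; since height and width are suprema over \emph{all} configurations, no finite bound follows. In the hyperbolic case this is precisely what $\delta$-thinness repairs: a geodesic whose endpoints are $L$-close to an $\epsilon$-quasiconvex set is $(2\delta+\epsilon)$-close to it away from its endpoints, \emph{independently of $L$} (or one passes to limit sets, where the $L$-dependence is absorbed into ``eventually''). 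The Morse hypothesis provides no such endpoint-independent improvement --- the concatenation trick only yields the $L$-dependent constant $M(1,4L)$ --- and you supply no substitute. Second, your proposed ``key,'' that the Morse gauge forces two separating cosets to diverge at a uniform rate, is not a consequence of the definition: Morse-ness yields only qualitative superlinear divergence, with no modulus uniform over pairs of cosets, and extracting uniform quantitative control of this kind is essentially the content of the theorem in \cite{T2017} rather than a lemma one may assume. As written, then, this is a correct plan for the easy reductions together with a named placeholder for the actual proof.
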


%In this section, we will show that our RACGs have similar coarse geometry properties as in irreducible RAAGs. More precisely, we will show that the Morse boundary of our RACGs is totally disconnected and all their strongly quasiconvex subgroups of infinite index are virtually free. Before we discuss strongly quasiconvex subgroups of right-angled Coxeter group $G_K$ ($K\in \KK_n$) we need to review some propositions.

\subsection{Strongly quasiconvex subgroups and stable subgroups in certain tree of groups and application to right-angled Coxeter groups}
\label{hihihi}

In this subsection, we prove that torsion free, strongly quasiconvex subgroups of infinite index of certain finite graph of groups are free. This result can be applied to our right-angled Coxeter groups.

\begin{lem}
\label{hihi}
Assume a group $G$ is decomposed as a finite graph $T$ of groups such that each edge group is infinite. Let $G_v$ be a vertex subgroup. Then for each $g_1$ and $g_2$ in $G$ there is a finite sequence of conjugates of vertex subgroups $g_1G_vg_1^{-1}=Q_0,Q_1,\cdots, Q_m=g_2G_vg_2^{-1}$ such that $Q_{i-1}\cap Q_i$ is infinite for each $i\in \{1,2,\cdots,m\}$. 
\end{lem}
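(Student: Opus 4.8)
The plan is to pass to the Bass--Serre tree of the graph-of-groups decomposition and simply read off the desired chain from the geodesic joining two vertices. Let $\tilde{T}$ denote the Bass--Serre tree on which $G$ acts, associated to the decomposition of $G$ as the finite graph $T$ of groups. The two features of this action that I will use are the standard ones: the stabilizer of each vertex of $\tilde{T}$ is a conjugate of a vertex group of $T$, the stabilizer of each edge of $\tilde{T}$ is a conjugate of an edge group of $T$, and if $\tilde{e}$ is an edge of $\tilde{T}$ with endpoints $w$ and $w'$ then $\Stab(\tilde{e}) \subseteq \Stab(w) \cap \Stab(w')$.

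First I would fix a lift $\tilde{v}$ of the vertex $v$ in $\tilde{T}$ whose stabilizer is exactly $G_v$. Then the two subgroups $g_1 G_v g_1^{-1}$ and $g_2 G_v g_2^{-1}$ are precisely the stabilizers of the vertices $g_1 \tilde{v}$ and $g_2 \tilde{v}$ of $\tilde{T}$, since $\Stab(g_i \tilde{v}) = g_i \Stab(\tilde{v}) g_i^{-1} = g_i G_v g_i^{-1}$.

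Next, since $\tilde{T}$ is a tree there is a unique geodesic edge-path $g_1 \tilde{v} = w_0, w_1, \ldots, w_m = g_2 \tilde{v}$ joining these two vertices, and I would set $Q_i = \Stab(w_i)$. Each $Q_i$ is the stabilizer of a vertex of $\tilde{T}$, hence a conjugate of a vertex group of $T$, and by construction $Q_0 = g_1 G_v g_1^{-1}$ and $Q_m = g_2 G_v g_2^{-1}$. For consecutive indices, let $\tilde{e}_i$ be the edge of $\tilde{T}$ with endpoints $w_{i-1}$ and $w_i$. Then $\Stab(\tilde{e}_i) \subseteq Q_{i-1} \cap Q_i$, and $\Stab(\tilde{e}_i)$ is a conjugate of an edge group of $T$, which is infinite by hypothesis. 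Hence $Q_{i-1} \cap Q_i$ is infinite for each $i \in \{1, 2, \ldots, m\}$, which is exactly the assertion.

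The argument is a direct application of Bass--Serre theory, so I do not expect a serious obstacle. The only point requiring care is the bookkeeping of conventions: confirming that vertex (respectively, edge) stabilizers in the Bass--Serre tree are exactly the conjugates of vertex (respectively, edge) groups, and noting that the hypothesis of infinite edge groups passes to their conjugates, since conjugation is an isomorphism. One should also keep in mind that the underlying graph $T$ need not be a tree, but this is harmless because the Bass--Serre tree $\tilde{T}$ itself is always a tree, which is all the geodesic argument requires.
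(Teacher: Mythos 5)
Your proposal is correct and follows exactly the same route as the paper: the paper's proof also passes to the Bass--Serre tree $\tilde{T}$, identifies conjugates of vertex (resp.\ edge) groups with stabilizers of vertices (resp.\ edges) of $\tilde{T}$, and deduces the chain from connectedness of $\tilde{T}$ together with the hypothesis that edge groups are infinite. Your write-up simply makes explicit (via the geodesic $w_0,\dots,w_m$ and the containment $\Stab(\tilde{e}_i)\subseteq Q_{i-1}\cap Q_i$) the details the paper leaves implicit.
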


\begin{proof}
Let $\tilde{T}$ be the Bass-Serre tree of the decomposition of $G$. Then conjugates of vertex groups (resp. edge groups) correspond to vertices (edges) of $\tilde{T}$. Therefore, the lemma follows the facts that $\tilde{T}$ is connected and each edge group is infinite. 
\end{proof}

The following proposition shows some property of strongly quasiconvex subgroups in certain graph of groups.

\begin{prop}
\label{quahay}
Assume a group $G$ is decomposed as a finite graph $T$ of groups that satisfies the following.
\begin{enumerate}
\item For each vertex $v$ of $T$ the vertex group $G_v$ is finitely generated and undistorted. Moreover, any strongly quasiconvex, infinite subgroup of $G_v$ is of finite index.
\item Each edge group is infinite.
\end{enumerate}
Then, if $H$ is a strongly quasiconvex subgroup of $G$ of infinite index, then $gHg^{-1}\cap G_v$ is finite for each vertex group $G_v$ and each group element $g$.
\end{prop}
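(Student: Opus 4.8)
The plan is to reduce to the case $g=1$ and then argue by contradiction using the finiteness of width. First I would observe that it suffices to show that $H\cap G_v$ is finite for every vertex group $G_v$: for arbitrary $g$ the conjugate $gHg^{-1}$ is again strongly quasiconvex (strong quasiconvexity is invariant under conjugation, since $gHg^{-1}$ is the image of a bounded Hausdorff perturbation of $H$ under the isometry given by left translation by $g$) and still of infinite index, so applying the $g=1$ case to $gHg^{-1}$ yields that $gHg^{-1}\cap G_v$ is finite. So suppose toward a contradiction that $H\cap G_{v_0}$ is infinite for some vertex $v_0$. Since $G_{v_0}$ is undistorted, Proposition~\ref{pp3} shows that $H\cap G_{v_0}$ is strongly quasiconvex in $G_{v_0}$; being infinite, hypothesis~(1) forces it to have finite index in $G_{v_0}$.

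Next I would propagate this ``thickness'' across the entire Bass--Serre tree $\tilde T$ of the decomposition, in the spirit of Lemma~\ref{hihi}. Call a vertex $\tilde w$ of $\tilde T$ \emph{thick} if $H$ meets its stabilizer $\Stab(\tilde w)$ in a finite-index subgroup; note that every vertex stabilizer is a conjugate of some $G_v$, hence is undistorted and still enjoys the property that its infinite strongly quasiconvex subgroups are of finite index (both properties are invariant under the conjugation isomorphism $G_v\to gG_vg^{-1}$). The vertex stabilized by $G_{v_0}$ is thick by the previous paragraph. If $\tilde w$ is thick and $\tilde e$ is an incident edge, then $\Stab(\tilde e)$ is infinite by hypothesis~(2) and $H\cap\Stab(\tilde e)=\bigl(H\cap\Stab(\tilde w)\bigr)\cap\Stab(\tilde e)$ has finite index in $\Stab(\tilde e)$, so it is infinite; since $\Stab(\tilde e)\le\Stab(\tilde w')$ for the other endpoint $\tilde w'$, the intersection $H\cap\Stab(\tilde w')$ is infinite, and then Proposition~\ref{pp3} together with hypothesis~(1) upgrades it to finite index in $\Stab(\tilde w')$. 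Thus thickness crosses every edge, and since $\tilde T$ is connected, every vertex of $\tilde T$ is thick. Equivalently, $H\cap gG_vg^{-1}$ has finite index in $gG_vg^{-1}$ for all $g\in G$ and all vertex groups $G_v$.

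Finally I would extract the contradiction from finite width. Fix the infinite group $G_{v_0}$. For every $g_i\in G$ we have $g_iHg_i^{-1}\cap G_{v_0}=g_i\bigl(H\cap g_i^{-1}G_{v_0}g_i\bigr)g_i^{-1}$, which is of finite index in $G_{v_0}$ by the thickness statement applied to the vertex $g_i^{-1}\tilde v_0$. Hence for any $g_i,g_j$ the intersection $g_iHg_i^{-1}\cap g_jHg_j^{-1}$ contains the intersection of two finite-index subgroups of $G_{v_0}$, which is again finite index in $G_{v_0}$ and therefore infinite (as $G_{v_0}$ is infinite). So every pair of conjugates of $H$ intersects in an infinite group; since $[G:H]=\infty$ I may choose infinitely many distinct cosets $g_iH$, producing a collection that witnesses infinite width of $H$ and contradicting Theorem~\ref{lacloi1}. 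I expect the propagation step to be the main obstacle: the essential point is that each newly reached vertex stabilizer, being a conjugate of a vertex group, inherits hypotheses~(1)--(2), so that Proposition~\ref{pp3} and hypothesis~(1) can repeatedly upgrade ``infinite intersection with $H$'' to ``finite-index intersection with $H$'' and let thickness spread to all of $\tilde T$.
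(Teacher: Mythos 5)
Your proposal is correct and follows essentially the same route as the paper's proof: upgrade an infinite intersection $H\cap G_{v_0}$ to finite index via Proposition~\ref{pp3} and hypothesis~(1), propagate finite-index intersections through the Bass--Serre tree using the infinite edge groups (you inline the connectivity argument that the paper packages as Lemma~\ref{hihi}), and contradict Theorem~\ref{lacloi1}. The only cosmetic differences are your initial reduction to $g=1$ and your use of finite \emph{width} (pairwise infinite intersections among infinitely many distinct cosets) where the paper uses finite \emph{height} (an infinite $(n+1)$-fold intersection); both are drawn from the same theorem and are equally valid.
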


\begin{proof}
We assume for the contradiction that $g_0Hg_0^{-1}\cap G_v$ is infinite for some vertex group $G_v$ and some $g_0\in G$. We claim that $gHg^{-1}\cap G_v$ has finite index in $G_v$ for all $g\in G$. In fact, since $g_0Hg_0^{-1}$ is a strongly quasiconvex subgroup and $G_v$ is an undistorted subgroup, then $g_0Hg_0^{-1}\cap G_v$ is a strongly quasiconvex subgroup of $G_v$ by Proposition \ref{pp3}. Therefore, $g_0Hg_0^{-1}\cap G_v$ has finite index in $G_v$ by the hypothesis. 

We now prove that $gHg^{-1}\cap G_v$ has finite index in $G_v$ for all $g\in A_\Gamma$. By Lemma \ref{hihi}, there is a finite sequence of conjugates of vertex subgroups $g_0^{-1}G_vg_0=Q_0,Q_1,\cdots, Q_m=g^{-1}G_vg$ such that $Q_{i-1}\cap Q_i$ is infinite for each $i\in \{1,2,\cdots,m\}$. Since $g_0Hg_0^{-1}\cap G_v$ has finite index in $G_v$, $H \cap g_0^{-1}G_vg_0$ has finite index in $Q_0=g_0^{-1}G_vg_0$. Also, subgroup $Q_0\cap Q_1$ is infinite. Then, $H \cap Q_1$ is infinite. Using a similar argument as above, we obtain $H \cap Q_1$ has finite index in $Q_1$. Repeating this process, we have $H\cap g^{-1}G_vg$ has finite index in $g^{-1}G_vg$. In other word, $gHg^{-1}\cap G_v$ has finite index in $G_v$. 

By Theorem \ref{lacloi1}, there is a number $n$ such that the intersection of any $(n+1)$ essentially distinct conjugates of $H$ is finite. Since $H$ has infinite index in $G$, there is $n+1$ distinct element $g_1, g_2,\cdots g_{n+1}$ such that $g_iH\neq g_j H$ for each $i\neq j$. Also, $g_i H g_i^{-1} \cap G_v$ has finite index in $G_v$ for each $i$. Then $(\cap g_i H g_i^{-1}) \bigcap G_v$ also has finite index in $G_v$. In particular, $\bigcap g_i H g_i^{-1}$ is infinite which is a contradiction. Therefore, $gHg^{-1}\cap G_v$ is finite for each vertex group $G_v$.
\end{proof}

%\begin{prop}
%\label{hayqua}
%Assume a group $G$ is decomposed as a tree $T$ of groups. Let $H$ be a subgroup of $G$ such that $gHg^{-1}\cap G_v$ is trivial for each vertex group $G_v$ and each group element $g$. Then $H$ is free.
%\end{prop}

%\begin{proof}
%We prove the above proposition by induction on the number of vertices of the tree $T$. For the base case $T=v$, $G=\Gamma_v$ and $H=\{1\}$. Therefore, the result in this case is obvious. For the inductive step, choose a vertex $v$ of $T$ of degree 1. Let $e$ be the unique edge of $T$ containing $v$ and $G_e$ the corresponding edge group. Let $T'$ be a subtree of $T$ induced by all vertices of $T$ except $v$ and let $G'$ be the tree $T'$ of groups. We observe that $G=G'*_{G_e} G_v$. By standard Bass-Serre Theory, we see that $H$ acts on the corresponding Bass-Serre tree with trivial edge stabilizer. Therefore, there exists a (possibly infinite) collection of subgroups $\{H_i\}$ with each $H_i$ conjugate to $G'$ in $G$ such that $H$ is a free product of subgroups $H_i$ with possibly an additional free factor. Since $H_i$ is conjugate into $G'$ and the tree $T'$ has fewer vertices than $T$, we see that $H$ is free by induction.
%\end{proof}

%The following proposition is a direct result of Propositions \ref{quahay} and \ref{hayqua}.

\begin{prop}
\label{hayqua}
Assume a group $G$ is decomposed as a finite graph $T$ of groups. Let $H$ be a subgroup of $G$ such that $gHg^{-1}\cap G_v$ is trivial for each vertex group $G_v$ and each group element $g$. Then $H$ is free.
\end{prop}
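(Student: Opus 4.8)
The plan is to let $H$ act on the Bass--Serre tree of the given graph of groups decomposition and to show that this action is free; the conclusion then follows from the classical fact that a group acting freely on a tree is itself free.

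First I would recall from Bass--Serre theory that, associated to the decomposition of $G$ as a finite graph $T$ of groups, there is a tree $\tilde{T}$, the Bass--Serre tree, on which $G$ acts without inversions (after one subdivision of the edges if necessary). The stabilizer in $G$ of a vertex of $\tilde{T}$ lying above a vertex $v$ of $T$ is a conjugate $gG_vg^{-1}$ of the vertex group $G_v$, the stabilizer of an edge is a conjugate of the corresponding edge group, and in particular every edge stabilizer is contained in a vertex stabilizer.

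Next I would restrict this action to the subgroup $H$. The stabilizer in $H$ of a vertex of $\tilde{T}$ above $v$ is $H \cap gG_vg^{-1}$ for some $g \in G$. Conjugating by $g^{-1}$ shows that $H \cap gG_vg^{-1}$ is trivial if and only if $g^{-1}Hg \cap G_v$ is trivial, and this last group is trivial by the hypothesis applied to the element $g^{-1}$ (namely $(g^{-1})H(g^{-1})^{-1}\cap G_v = g^{-1}Hg\cap G_v$). Hence every vertex stabilizer of the $H$-action on $\tilde{T}$ is trivial, and since each edge stabilizer is contained in a vertex stabilizer, every edge stabilizer is trivial as well. Thus $H$ acts freely on $\tilde{T}$, and a group acting freely on a tree is free, so $H$ is free.

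The step requiring the most care is essentially bookkeeping: ensuring the action is without inversions (which may need a single barycentric subdivision of $\tilde{T}$, an operation that leaves the stabilizers of the original vertices unchanged and introduces only trivial stabilizers on the new edges) and correctly translating the hypothesis ``$gHg^{-1}\cap G_v$ is trivial for all $g$'' into ``all $H$-vertex-stabilizers are trivial.'' There is no serious obstacle; the entire content is the identification of the $G$-vertex-stabilizers of $\tilde{T}$ with the conjugates of the $G_v$, combined with the standard characterization of free groups as those admitting a free action on a tree.
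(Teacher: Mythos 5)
Your proposal is correct and follows essentially the same route as the paper's own proof: both let $H$ act on the Bass--Serre tree $\tilde{T}$, identify $\Stab_H(v)=\Stab_G(v)\cap H$ with an intersection $H\cap gG_vg^{-1}$, invoke the hypothesis to conclude the action is free, and apply the fact that a group acting freely on a tree is free. Your extra care about inversions (barycentric subdivision) and the explicit conjugation bookkeeping are fine details the paper leaves implicit, but they do not change the argument.
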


%\begin{proof}
%Let $\tilde{T}$ be the Bass-Serre tree of the decomposition of $G$. Then $G$ acts on $\tilde{T}$ such that the stabilizer of a vertex of $T$ is a conjugate of a vertex group. Since $H$ has trivial intersection with any conjugate of vertex group, $H$ acts freely on $\tilde{T}$ and therefore $H$ must be a free subgroup. 
%\end{proof}

\begin{proof}
Let $\tilde{T}$ be the Bass-Serre tree of the decomposition of $G$. Then $G$ acts on $\tilde{T}$ such that the stabilizer of a vertex of $T$ is a conjugate of a vertex group. To show $H$ is free, it is enough to show that $H$ acts freely on $\tilde{T}$. To see $H$ acts freely on $\tilde{T}$, it suffices to show that for each vertex $v \in \tilde{T}$ then $Stab_{H}(v) = \{e\}$. Note that $Stab_{H}(v) = Stab_{G}(v) \cap  H$. By the assumption, we have that $Stab_{G}(v) \cap  H = \{e\}$, thus $Stab_{H}(v) = \{e\}$. The proposition is proved.
\end{proof}

\begin{proof}[Proof of Proposition~\ref{vuiqua}]
The proof is a combination of Proposition~\ref{quahay} and Proposition~\ref{hayqua}.
\end{proof}

\begin{prop}
\label{pp2}
If $G$ is a finitely generated group that has infinite center and $H$ is an infinite strongly quasiconvex subgroup of $G$, then $H$ is of finite index.
\end{prop}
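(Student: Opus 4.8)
The plan is to deduce finite index purely from the finiteness of the \emph{width} of $H$ (Theorem~\ref{lacloi1}), exploiting the fact that centrality forces every conjugate of $H$ to contain one fixed infinite subgroup. Write $Z=Z(G)$ and $Z_0=Z\cap H$. The observation driving everything is that for a central element $z$ and any $h\in H$ one has $zhz^{-1}=h$, so $zHz^{-1}=H$; more generally $gZ_0g^{-1}=Z_0$ for every $g\in G$, since $Z_0\subseteq Z$.

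First I would show that $Z_0$ is infinite. Consider the cosets $\set{zH}{z\in Z}$. Distinct such cosets are indexed by $Z/Z_0$, and any two of them, $z_1H$ and $z_2H$, satisfy $z_1Hz_1^{-1}\cap z_2Hz_2^{-1}=H\cap H=H$, which is infinite. Thus if $[Z:Z_0]$ were infinite we would produce an infinite family of distinct cosets whose conjugates pairwise intersect in an infinite set, contradicting the finiteness of the width of $H$ guaranteed by Theorem~\ref{lacloi1}. Hence $[Z:Z_0]<\infty$, and since $Z$ is infinite, $Z_0$ is infinite.

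With $Z_0$ infinite, the second step is immediate: for arbitrary $g_1,g_2\in G$ we have $g_iHg_i^{-1}\supseteq g_iZ_0g_i^{-1}=Z_0$, so $g_1Hg_1^{-1}\cap g_2Hg_2^{-1}\supseteq Z_0$ is infinite. Therefore the collection of \emph{all} distinct cosets of $H$ in $G$ is a family of distinct cosets whose conjugates pairwise intersect in an infinite set. By definition its cardinality is at most the width of $H$, which is finite by Theorem~\ref{lacloi1}; hence $[G:H]<\infty$.

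The argument has no genuine analytic obstacle: the only place where strong quasiconvexity is used is through Theorem~\ref{lacloi1}. The one point that requires care is that I must establish that $Z\cap H$ --- and not merely $Z$ --- is infinite; this is exactly what the first application of finite width delivers, and it is what keeps the naive ``the central subgroup sits inside all conjugates'' idea from stalling. I would also double-check that finite \emph{width} (rather than finite height) is the correct invariant to invoke, since it is width that directly bounds the number of cosets, and that the identities $zHz^{-1}=H$ and $gZ_0g^{-1}=Z_0$ are applied only to central elements.
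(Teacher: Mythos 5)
Your proof is correct and follows essentially the same two-step route as the paper: first exploit that central elements $z$ yield distinct cosets $zH$ with identical conjugates $zHz^{-1}=H$ to force $[Z:Z\cap H]<\infty$ (hence $Z\cap H$ infinite), then use that the central subgroup $Z\cap H$ lies in every conjugate $gHg^{-1}$ to force $[G:H]<\infty$. The only cosmetic difference is that you invoke the finite \emph{width} of $H$ from Theorem~\ref{lacloi1} where the paper invokes the finite \emph{height} guaranteed by the same theorem; both deliver the contradiction in exactly the same way.
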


\begin{proof}
Let $Z$ be the center of the group $G$. We first prove that the subgroup $Z\cap H$ has finite index in $Z$. Assume for a contradiction that the subgroups $Z\cap H$ has infinite index in $Z$. Then there is an infinite sequence $(z_n)$ of elements in $Z$ such that $z_i(Z\cap H)\neq z_j(Z\cap H)$ for $i\neq j$. Therefore, $z_iH\neq z_jH$ for $i\neq j$. However, we also have $z_iHz^{-1}_i=z_jHz^{-1}_j$ for all $i\neq j$ which contradicts to Theorem 1.2 in \cite{T2017} that a strongly quasiconvex subgroup has finite height. Therefore, the subgroup $Z\cap H$ has finite index in $Z$. In particular, the subgroup $Z\cap H$ is infinite.

We now assume for a contradiction that the subgroup $H$ has infinite index in $G$. Then there is an infinite sequence $(g_n)$ of elements in $G$ such that $g_iH \neq g_j H$ for $i\neq j$. However, $Z\cap H$ is an infinite subgroup of $g_iHg^{-1}_i$ for all $i$ which contradicts to Theorem 1.2 in \cite{T2017} that a strongly quasiconvex subgroup has finite height. Therefore, the subgroup $H$ has finite index in $G$. 
\end{proof}

By combining the above proposition with Proposition~\ref{vuiqua}, we obtain the proof of Theorem \ref{good}.

%\begin{thm}
%Let $K$ be a flag complex in $\KK_n$ and $H$ a strongly quasiconvex subgroup of infinite index of the right-angled Coxeter group $G_K$. Then $H$ is virtually free. In particular, $H$ is stable. 
%\end{thm}

\begin{proof}[Proof of Theorem~\ref{good}]
Obviously, the right-angled Coxeter group $G_K$ is a tree of groups whose vertex groups have infinite center and whose edge groups are infinite. Let $G_1$ be a finite index torsion free subgroup of the right-angled Coxeter group $G_K$ and $H_1=H\cap G_1$. Then $H_1$ is a strongly quasiconvex, torsion free subgroup of $G_K$ of infinite index. Therefore, $H_1$ is a free group by Propositions \ref{vuiqua} and \ref{pp2}. Also, $H_1$ is a finite index subgroup of $H$. Therefore, the subgroup $H$ is virtually free.
\end{proof}

%In \cite{MR3339446}, Charney-Sultan construct a right-angled Coxeter group $G_\Omega$ whose Morse boundary is totally disconnected. The key idea for the example is the group $G_\Omega$ is an amalgamated product of two right-angled Coxeter groups $G_{\Gamma_1}$ and $G_{\Gamma_2}$ such that the ``limit set'' of the subgroup $G_{\Gamma_1}$ in the Morse boundary of $G_\Omega$ is totally disconnected and the Morse boundary of the subgroup $G_{\Gamma_2}$ is empty. Therefore, using the Bass-Serre tree for the amalgamated product decomposition, they prove the Morse boundary of the group $G_\Omega$ is totally disconnected. We refer the reader to Section 5.1 in \cite{MR3339446} for more details.

%We now discuss the Morse boundary of right-angled Coxeter groups $G_K$, where $K$ be a flag complex in $\KK_n$. Obviously, $G_K$ is a tree of right-angled Coxeter subgroups whose Morse boundary is empty. Therefore, we can prove that the Morse boundary of the such group $G_K$ is totally disconnected by using a similar idea as in the work of Charney-Sultan. More precisely, we obtain the following theorem.

%\begin{thm}
%The Morse boundary of each right-angled Coxeter group $G_K$ ($K\in \KK_n$) is totally disconnected.
%\end{thm}

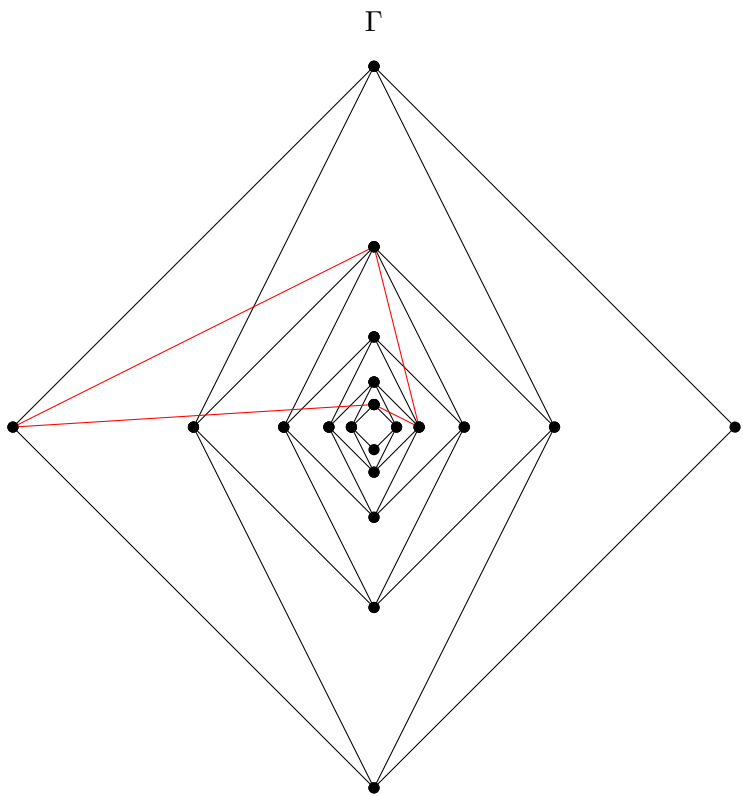
\begin{figure}
\begin{tikzpicture}[scale=.6]

\draw (0,0.5) node[circle,fill,inner sep=1.5pt, color=black](1){} -- (0.5,0) node[circle,fill,inner sep=1.5pt, color=black](1){}-- (0,-0.5) node[circle,fill,inner sep=1.5pt, color=black](1){} -- (-0.5,0) node[circle,fill,inner sep=1.5pt, color=black](1){} -- (0,0.5) node[circle,fill,inner sep=1.5pt, color=black](1){};

\draw (-0.5,0) node[circle,fill,inner sep=1.5pt, color=black](1){} -- (0,1) node[circle,fill,inner sep=1.5pt, color=black](1){}-- (0.5,0) node[circle,fill,inner sep=1.5pt, color=black](1){} -- (0,-1) node[circle,fill,inner sep=1.5pt, color=black](1){} -- (-0.5,0) node[circle,fill,inner sep=1.5pt, color=black](1){};

\draw (0,1) node[circle,fill,inner sep=1.5pt, color=black](1){} -- (1,0) node[circle,fill,inner sep=1.5pt, color=black](1){}-- (0,-1) node[circle,fill,inner sep=1.5pt, color=black](1){} -- (-1,0) node[circle,fill,inner sep=1.5pt, color=black](1){} -- (0,1) node[circle,fill,inner sep=1.5pt, color=black](1){};

\draw (-1,0) node[circle,fill,inner sep=1.5pt, color=black](1){} -- (0,2) node[circle,fill,inner sep=1.5pt, color=black](1){}-- (1,0) node[circle,fill,inner sep=1.5pt, color=black](1){} -- (0,-2) node[circle,fill,inner sep=1.5pt, color=black](1){} -- (-1,0) node[circle,fill,inner sep=1.5pt, color=black](1){};

\draw (0,2) node[circle,fill,inner sep=1.5pt, color=black](1){} -- (2,0) node[circle,fill,inner sep=1.5pt, color=black](1){}-- (0,-2) node[circle,fill,inner sep=1.5pt, color=black](1){} -- (-2,0) node[circle,fill,inner sep=1.5pt, color=black](1){} -- (0,2) node[circle,fill,inner sep=1.5pt, color=black](1){};

\draw (-2,0) node[circle,fill,inner sep=1.5pt, color=black](1){} -- (0,4) node[circle,fill,inner sep=1.5pt, color=black](1){}-- (2,0) node[circle,fill,inner sep=1.5pt, color=black](1){} -- (0,-4) node[circle,fill,inner sep=1.5pt, color=black](1){} -- (-2,0) node[circle,fill,inner sep=1.5pt, color=black](1){};

\draw (0,4) node[circle,fill,inner sep=1.5pt, color=black](1){} -- (4,0) node[circle,fill,inner sep=1.5pt, color=black](1){}-- (0,-4) node[circle,fill,inner sep=1.5pt, color=black](1){} -- (-4,0) node[circle,fill,inner sep=1.5pt, color=black](1){} -- (0,4) node[circle,fill,inner sep=1.5pt, color=black](1){};

\draw (-4,0) node[circle,fill,inner sep=1.5pt, color=black](1){} -- (0,8) node[circle,fill,inner sep=1.5pt, color=black](1){}-- (4,0) node[circle,fill,inner sep=1.5pt, color=black](1){} -- (0,-8) node[circle,fill,inner sep=1.5pt, color=black](1){} -- (-4,0) node[circle,fill,inner sep=1.5pt, color=black](1){};

\draw (0,8) node[circle,fill,inner sep=1.5pt, color=black](1){} -- (8,0) node[circle,fill,inner sep=1.5pt, color=black](1){}-- (0,-8) node[circle,fill,inner sep=1.5pt, color=black](1){} -- (-8,0) node[circle,fill,inner sep=1.5pt, color=black](1){} -- (0,8) node[circle,fill,inner sep=1.5pt, color=black](1){};

\draw (0,0.5)[color=red] node[circle,fill,inner sep=1.5pt, color=black](1){} -- (1,0) node[circle,fill,inner sep=1.5pt, color=black](1){}-- (0,4) node[circle,fill,inner sep=1.5pt, color=black](1){} -- (-8,0) node[circle,fill,inner sep=1.5pt, color=black](1){} -- (0,0.5) node[circle,fill,inner sep=1.5pt, color=black](1){};

\node at (0,9) {$\Gamma$};

\end{tikzpicture}

\caption{The special subgroup $H$ generated by the red 4-cycle is a non-stable, strongly quasiconvex subgroup of infinite index of the right-angled Coxeter group $G_\Gamma.$}
\label{afourth}
\end{figure}

\begin{exmp}
\label{hayhayqua}
We now construct a connected, triangle-free, $\mathcal{CFS}$ graph $\Gamma$ with no separating vertices or edges such that the corresponding right-angled Coxeter group $G_\Gamma$ has a non-stable, strongly quasiconvex subgroup of infinite index. 

Let $\Gamma$ be the graph in Figure \ref{afourth} and $K$ be the red 4-cycle of $\Gamma$. It is not hard to check $\Gamma$ is connected, triangle-free, $\mathcal{CFS}$ and has no separating vertices or edges. Moreover, the 4-cycle $K$ does not contain any pair of non-adjacent vertices of 4-cycle other than itself. Therefore, the subgroup $H=G_K$ is strongly quasiconvex by Theorem 1.11 in \cite{T2017}. We note that infinite index in $G_\Gamma$. Also $H$ is not hyperbolic and therefore $H$ is not stable.
\end{exmp}

\begin{rem}
The existence of the such subgroup $H \le G_\Gamma$ in Example~\ref{hayhayqua} implies that the group $G_\Gamma$ is not commensurable to any right-angled Artin group because all strongly quasiconvex subgroups of infinite index of a one-ended right-angled Artin group are free. We note that $G_\Gamma$ is not even quasi-isometric to any right-angled Artin group by the very recent work of Russell-Spriano-Tran (see Example 7.7 in \cite{RST}).
\end{rem}

%%%%%%%%%%%%%%%%%%%%%%%%%%%%%%%%%%%%%%%%%%%%%%%%%%%%%%%%%%%
%%                BIBLIOGRAPHY
%%%%%%%%%%%%%%%%%%%%%%%%%%%%%%%%%%%%%%%%%%%%%%%%%%%%%%%%%%%
\bibliographystyle{alpha}
\bibliography{hoanghung}
\end{document}